\let\oldtocsection=\tocsection
\let\oldtocsubsection=\tocsubsection
\renewcommand{\tocsection}[2]{\hspace{0em}\oldtocsection{#1}{#2}}
\renewcommand{\tocsubsection}[2]{\hspace{1em}\oldtocsubsection{#1}{#2}}
\newtheorem{THM}{Theorem}
\newtheorem{theorem}{Theorem}[section]
\newtheorem{lemma}[theorem]{Lemma}
\newtheorem{corollary}[theorem]{Corollary}
\newtheorem{prop}[theorem]{Proposition}
\newtheorem{fact}[theorem]{Fact}
\theoremstyle{remark}
\newtheorem{remark}[theorem]{Remark}
\theoremstyle{remark}
\newtheorem{eg_no_qed}[theorem]{Example}
\newenvironment{example}[1][]{\begin{eg_no_qed}[#1]\pushQED{\qed}}{\popQED\end{eg_no_qed}}
\theoremstyle{definition}
\theoremstyle{definition}
\newtheorem{definition}[theorem]{Definition}
\numberwithin{equation}{section}
\newcommand{\C}{\mathbb{C}}           % Use for complex numbers.
\newcommand{\Z}{\mathbb{ Z}}           % Use for integers.
\newcommand{\Ad}{\operatorname{Ad }}             % Use for Adjoint action
\newcommand{\ad}{\operatorname{ad}}             % Use for adjoint action
\newcommand{\codim}{\operatorname{codim}}
\newcommand{\Inv}{\operatorname{Inv}}
\newcommand{\GL}{\operatorname{GL}}
\newcommand{\fb}{{\mathfrak b}}
\newcommand{\fg}{{\mathfrak g}}
\newcommand{\fl}{{\mathfrak l}}
\newcommand{\fo}{{\mathfrak o}}
\newcommand{\fp}{{\mathfrak p}}
\newcommand{\ft}{{\mathfrak t}}
\newcommand{\fu}{{\mathfrak u}}
\newcommand{\fz}{\mathfrak z}
\newcommand{\ca}{\mathcal{A}}
 \newcommand{\cb}{\mathcal{B}}
 \newcommand{\ci}{\mathcal{I}}
 \newcommand{\cl}{\mathcal{L}}
 \newcommand{\cn}{\mathcal{N}}
 \newcommand{\co}{\mathcal{O}}
 \newcommand{\cv}{\mathcal{V}}
 \newcommand{\cx}{\mathcal{X}}
\newcommand{\Hess}{\mathcal{H}\mathrm{ess}}
\DeclareMathOperator{\chern}{ch}
\newcommand{\Pet}{\mathfrak{P}}
\DeclareMathOperator{\inv}{Inv}
\DeclareMathOperator{\des}{Des}
\DeclareMathOperator{\Lie}{Lie}
\DeclareMathOperator{\N}{\upnu}
\begin{document}
\parskip=4pt
\baselineskip=14pt

%%%%%%%%%%%%%%%%%%%%%%%%%%%%%%%%%%%%%%%%%%%%%%%%%%%%
%%%%%%%%%%%%%%%%%%%%%%%%%%%%%%%%%%%%%%%%%%%%%%%%%%%%
%%%%%%%%%%%%%%%%%%%%%%%%%%%%%%%%\end{center}%%%%%%%%%%%%%%%%%%%%
\title[Regular Hessenberg varieties]{Regular Hessenberg varieties for the minimal indecomposable Hessenberg space}

\author{Erik Insko}
\address{Department of Mathematics\\ Central College \\ 812 University St \\ Pella, Iowa \\  50219
\\ USA}
\email{inskoe@central.edu}

\author{Martha Precup}
\address{Department of Mathematics\\ Washington University in St. Louis \\ One Brookings Drive\\ St. Louis, Missouri\\ 63130\\ USA  }
\email{martha.precup@wustl.edu}

\author{Alexander Woo}
\address{Department of Mathematics and Statistical Sciences \\ University of Idaho \\ 875 Perimeter Drive, MS 1103 \\ Moscow, ID \\ 83844-1103 \\USA}
\email{awoo@uidaho.edu}

\date{\today}

\begin{abstract} 
This paper investigates the geometry of regular Hessenberg varieties associated with the minimal indecomposable Hessenberg space in the flag variety of a complex reductive group. These varieties form a flat family of irreducible subvarieties of the flag variety, encompassing notable examples such as the Peterson variety and toric varieties linked to Weyl chambers. 
Our first main result computes the closures of affine cells that pave these varieties explicitly, establishing a correspondence between Hessenberg--Schubert varieties and regular Hessenberg varieties in smaller dimensional flag varieties.  We also analyze the singular locus of these varieties, proving that all regular Hessenberg varieties are singular outside of the toric case. Specifically, we extend previous results on the singular locus of the Peterson variety to all Lie types. Additionally, we provide detailed descriptions of Hessenberg--Schubert variety inclusion relations, a combinatorial characterization of smooth Hessenberg--Schubert varieties, and simple formulas for their $K$-theory and cohomology classes. The paper also includes a classification of all singular permutation flags in each regular Hessenberg variety  in type A, linking them to combinatorial patterns, and generalizes these findings using root-theoretic data to all Lie types.
\end{abstract}

\maketitle

\tableofcontents

\section{Introduction}

This paper studies the geometry of regular Hessenberg varieties associated with the minimal indecomposable Hessenberg space.  These varieties constitute a flat family of irreducible subvarieties of the flag variety of a complex reductive algebraic group and include both the Peterson variety and toric variety associated with the Weyl chambers of a root system as special cases. Our main results compute the closures of the affine cells paving these varieties, identifying each of these so-called Hessenberg--Schubert varieties with a regular Hessenberg variety in the flag variety of a Levi subgroup.  We also undertake a study of the singular locus, characterizing the singular $T$-fixed points and proving that regular Hessenberg varieties associated with the minimal indecomposable Hessenberg space are singular outside of the toric case.  Finally, we find all of the singular locus of the Peterson variety in all Lie types, generalizing results of~\cite{Insko-Yong2012}.  Along the way, we record many corollaries of interest including a complete description of the inclusion relations among Hessenberg--Schubert varieties, a combinatorial root-theoretic characterization of which Hessenberg--Schubert varieties are smooth, and a simple formula for the $K$-class and cohomology class of each such variety.

Let $G$ be a reductive algebraic group over $\C$, $B$ a fixed Borel subgroup of $G$, and $\cb:=G/B$ the associated flag variety. Hessenberg varieties are closed subvarieties of the flag variety, originally defined and studied by De~Mari, Procesi, and Shayman~\cite{DS1988, DPS1992}. We state a precise definition of Hessenberg varieties below in Section~\ref{sec.Hessenberg}, but for the sake of this introduction, it is important to know that each Hessenberg variety $\Hess(X,H)$ is defined by an element of the Lie algebra $X \in \mathfrak{g}$ and an $\Ad(B)$-invariant subspace $H \subset \mathfrak{g}$.  If $X\in \fg$ such that the centralizer $Z_{\fg}(X)$ has the minimum possible dimension, then $X$ is called a \emph{regular element} of $\fg$ and $\Hess(X,H)$ a \emph{regular Hessenberg variety}.  Regular Hessenberg varieties are some of the most well-studied Hessenberg varieties: they arise in the study of the quantum cohomology of flag varieties~\cite{Kostant1996, Rietsch2003}, geometric representation theory~\cite{Stembrige1992, Tymoczko2008, Shareshian-Wachs2016, Brosnan-Chow2018, Cho-Hong-Lee2020}, and Schubert calculus~\cite{Anderson-Tymoczko2010, Drellich2015, Harada-Tymoczko2017,  GMS2021}. More recently, Abe, Fujita, and Zeng proved the higher cohomology groups of the structure sheaf of regular Hessenberg varieties vanish, and they also studied the flat family of regular Hessenberg varieties and proved that the scheme-theoretic fibers over the closed points are reduced~\cite{Abe-Fujita-Zeng2020}.

This paper focuses on the structure of regular Hessenberg varieties corresponding to the \emph{minimal indecomposable Hessenberg space}, defined by
\begin{eqnarray}\label{eqn.indecomposable}
H_\Delta:= \fb \oplus \bigoplus_{\alpha\in \Delta} \fg_{-\alpha},
\end{eqnarray}
where $\fb$ denotes the Lie algebra of $B$, the symbol $\Delta$ denotes the simple roots of $\fg$, and $\fg_{-\alpha}$ denotes the root space associated to the negative simple root $-\alpha$.  There are two well-studied cases. The first is when $N$ is a regular nilpotent element. In this case, the Hessenberg variety $\Pet_{\Delta}:=\Hess(N, H_\Delta)$ is called the \emph{Peterson variety}, which was introduced by Peterson and Kostant in~\cite{Kostant1996} and used to construct the quantum cohomology of the flag variety.  B\u{a}libanu showed $\Pet_\Delta$ is the closure of a $Z_G(N)$-orbit in the wonderful compactification of $G$~\cite{Balibanu2017, Balibanu2017b}. The singular locus of the Peterson variety in the type A flag variety $GL_n(\C)/B$ was computed by the first author and Yong in~\cite{Insko-Yong2012}.  We generalize this result in Section~\ref{sec.Peterson} below to the setting of arbitrary reductive algebraic groups.

The second well-studied case is $\Hess(S, H_\Delta)$ for $S$ a regular semisimple element. This is the toric variety associated with the Weyl chambers~\cite{DPS1992}, also called the {permutohedral variety}.   
These varieties are smooth~\cite{DPS1992}, and their structure is well-studied due to their connection with combinatorics.  For example: closures of the cells paving $\Hess(S,H_\Delta)$ were computed by the first two authors in~\cite{Insko-Precup19} and used in type A case of $G=GL_n(\C)$ by Cho, Hong, and Lee~\cite{Cho-Hong-Lee2020} to construct a basis for the equivariant cohomology of the permutohedral variety that is permuted by the symmetric group. 
 
Cellular decompositions of algebraic varieties are often crucial in understanding their geometric and topological properties.  Ideally, the variety in question will possess a CW-decomposition, in which the closure of a cell is the union of smaller dimensional cells.  For example, the flag variety has a CW-decomposition defined by Schubert cells; each indexed by element of the Weyl group $W$ of $G$.  The closure of each such cell is a Schubert variety, and equal to a union of Schubert cells. 
Unlike Schubert varieties, Hessenberg varieties do not have a natural, easily described CW-decomposition.  However, Tymoczko showed that Hessenberg varieties are paved by affine cells in Type A \cite{Tymoczko2006}, and Precup extended these results to all Lie types \cite{Precup2013}.  These affine pavings arise by intersecting the Schubert cell decomposition of the full flag variety with a Hessenberg variety (and throwing away the empty cells).  Hence we often refer to the cells in these pavings as Hessenberg--Schubert cells.  

Describing the closure relations between the cells in these pavings is an important challenge, and our first main result identifies the closure of each Hessenberg--Schubert cell in the regular Hessenberg variety $\Hess(X,H_\Delta)$.  To describe the closure relations between Hessenberg--Schubert cells, we use the notion of a descent set.  Given an element $w$ of the Weyl group $W$ we say that the simple reflection $s_\alpha\in W$ is a \emph{descent} of $w$ if $\ell(ws_\alpha)<\ell(w)$, where $\ell$ denotes the usual length function. The following result is a combination of Theorem~\ref{thm.cell.closure} and Corollaries~\ref{cor.closure.rel1} and~\ref{cor.closure.rel2} from Section~\ref{sec.Hessenberg-Schubert} below.

\begin{THM} \label{THM.1}  Suppose $X\in \fg$ is a regular element and $H_\Delta$ the minimal indecomposable Hessenberg space defined as in~\eqref{eqn.indecomposable}. Let $w\in W$ and $C_w \subset \cb$ denote the corresponding Schubert cell.  If the Hessenberg--Schubert cell $C_w\cap \Hess(X,H_\Delta)$ is nonempty, then its closure $\overline{C_w\cap \Hess(X,H_\Delta)}$ is isomorphic to a regular Hessenberg variety in the flag variety of a Levi subgroup of $G$.  

Furthermore, $\left( C_v\cap \Hess(X,H_\Delta)  \right) \cap \overline{C_w\cap \Hess(X,H_\Delta)}\neq\emptyset $ if and only if  $v$ and $w$ are elements of the same left coset of the subgroup $\left< s_{\alpha} \mid \ell(ws_\alpha)<\ell(w) \right>$ generated by descents of $w$, with containment whenever the descents of $v$ are a subset of the descents of $w$. 
\end{THM}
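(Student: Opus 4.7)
Let $J = J(w) := \{\alpha \in \Delta : \ell(ws_\alpha) < \ell(w)\}$ denote the right descent set of $w$, $L_J \subset G$ the corresponding standard Levi with Weyl group $W_J$, and $\cb_J = L_J/(L_J \cap B)$ its flag variety. The plan is to prove the isomorphism first via coordinate analysis on the Schubert cell, then use the resulting structure to derive the closure relations.

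For the isomorphism, I would parametrize $C_w = U_w \cdot wB$ using the unipotent group $U_w = U^+ \cap w U^- w^{-1}$ and analyze the Hessenberg condition $(uw)^{-1} X \in H_\Delta$ as a system of polynomial equations on the coordinates of $u \in U_w$. The goal is to show that for regular $X$ these equations cut out a subvariety of $C_w$ whose closure in $\cb$ is isomorphic to a regular Hessenberg variety $\Hess(X', H_J)$, where $X' \in \Lie(L_J)$ is a suitable regular element obtained via projection of $\Ad(w^{-1})X$ onto $\Lie(L_J)$ and $H_J = \Lie(L_J \cap B) \oplus \bigoplus_{\alpha \in J} \fg_{-\alpha}$ is the minimal indecomposable Hessenberg space in $\Lie(L_J)$. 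The heart of the argument is to show that the polynomial equations decompose into a Levi piece, corresponding exactly to the defining equations of $\Hess(X', H_J)$, and transverse equations that are automatically compatible; this relies crucially on the descent structure of $w$, since for $\alpha \in J$ the root $w\alpha$ is negative so the corresponding negative root space stays inside $\Lie(L_J)$, while for $\alpha \in \Delta \setminus J$ the transverse component lies inside $\fb \subset H_\Delta$ automatically. Regularity of $X'$ should then follow from a dimension match between the cell (computable via the affine paving theorem) and the expected dimension of $\Hess(X', H_J)$.

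The closure relations follow from the isomorphism. The $T$-fixed points of $\Hess(X', H_J)$ lie among $\{uB_J : u \in W_J\}$, and under the isomorphism they map to the $T$-fixed points $wuB$ in $\cb$; since each affine cell in a paved variety contains a unique $T$-fixed point, this gives the first ``iff'' statement that the cells $C_v \cap \Hess(X, H_\Delta)$ meeting $\overline{C_w \cap \Hess(X, H_\Delta)}$ are precisely those with $v \in wW_J$. For the containment, if $v = wu \in wW_J$ satisfies $J(v) \subseteq J(w) = J$, applying the isomorphism at $v$ yields $\overline{C_v \cap \Hess(X, H_\Delta)} \cong \Hess(X'', H_{J(v)})$ for a Levi $L_{J(v)} \subseteq L_J$; compatibility of the two isomorphisms, via the natural inclusion of Levi flag varieties and the inclusion $H_{J(v)} \subseteq H_J$, gives the claimed closure containment.

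The main obstacle is the decomposition of the Hessenberg condition into Levi and transverse pieces, together with the identification of $X'$ as regular in $\Lie(L_J)$. This is a delicate root-theoretic computation where the specific form of the minimal indecomposable Hessenberg space $H_\Delta$ plays an essential role: for a larger Hessenberg space additional negative root spaces would enter and the recursive Levi structure $H_J$ would no longer match. Establishing the exact correspondence and showing that the regular element projects to a regular element in the Levi is where the bulk of the technical work should lie.
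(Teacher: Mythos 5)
The general strategy here is sound and matches the spirit of the paper: realize $\overline{C_w\cap \Hess(X,H_\Delta)}$ as (a translate of) a regular Hessenberg variety in the flag variety $\cb_L$ of the Levi $L=L_{\des(w)}$, identify a regular element of $\fl_w$ by projecting a conjugate of $X$, and then read off the closure relations from the cell/fixed-point structure of the Levi Hessenberg variety. However, as written, the plan has two genuine gaps that would stall the proof.

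First, the root-theoretic justification you offer for the decomposition of the Hessenberg equations is incorrect. You write that ``for $\alpha$ a descent of $w$, the root $w\alpha$ is negative so the corresponding negative root space stays inside $\Lie(L_J)$.'' While $w(\alpha)\in\Phi^-$ is true, $w(\alpha)$ need not lie in $\Phi_{\des(w)}^-$: for instance in $S_4$, $w=3142$ has $\des(w)=\{\alpha_1,\alpha_3\}$ but $w(\alpha_1)=-\alpha_1-\alpha_2\notin\Phi_{\{\alpha_1,\alpha_3\}}$. What makes the decomposition work is a more subtle device: the reduced factorization $w=\tau_w y_{\des(w)}$ with $\tau_w\in W^{\des(w)}$, together with the nontrivial fact (Lemma~\ref{lemma.tau-properties2} in the paper) that when $w$ is $J$-admissible, the factor $\tau_w$ is \emph{also} a shortest representative in ${}^JW$, so $\tau_w^{-1}(J)\subseteq\Phi^+$. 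This is what forces the root vectors $E_{\tau_w^{-1}(\alpha)}$, $\alpha\in J$, to split cleanly into a piece in $\fl_w$ and a piece in the nilradical $\fu_w$, and it is what lets one translate $\Hess_L(X_{J,w})$ into $C_w\cap\Hess(X_J)$ by $\dot\tau_w$ (translating by $\dot w$ directly does not land the Levi Schubert cells into the right cells of $\cb$). The paper's Theorem~\ref{thm.cell.closure} and the closure Corollaries~\ref{cor.closure.rel1}--\ref{cor.closure.rel2} all lean on this one lemma; without an explicit substitute, the ``transverse equations are automatically compatible'' step is unjustified.

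Second, you propose to infer regularity of the projected element $X'$ from a dimension count against the affine-paving dimension of the cell. This is backwards: to know the dimension of $\Hess_L(X',H_J)$ you already need to know something about $X'$. The paper (Lemma~\ref{lemma.regular.proj}) proves regularity of $X_{J,w}$ directly by checking that $\alpha(S_{J,w})=0$ precisely when $\alpha\in\tau_w^{-1}(J)\cap\Phi_w$; again the key input is $\tau_w\in{}^JW$. (Your $X'=\pi_J(\Ad(w^{-1})X)$ is conjugate to the paper's $X_{J,w}$ by $\dot y_{\des(w)}\in L$, so the two choices give isomorphic Hessenberg varieties, but a direct regularity argument is still needed for either one.) The $T$-fixed-point argument for the closure relations is on the right track, and the containment criterion via $\des(v)\subseteq\des(w)$ is the right thing to prove, but the compatibility of isomorphisms at $v$ and $w$ is not automatic; the paper instead deduces containment from a dimension comparison of the cells on both sides (Corollary~\ref{cor.closure.rel2}), which is cleaner.
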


Using results of Abe, Fujita, and Zeng~\cite{Abe-Fujita-Zeng2020} we apply our results on the cell closures to describe their K-theory and cohomology classes (in the $K$-theory and cohomology rings of the flag variety) in Section~\ref{sec:ktheory}.  We also give an alternate derviation, valid in all types, of the geometric basis of the cohomology ring of the Peterson variety as described by Abe, Horiguchi, Kuwata, and Zeng~\cite{AHKZ-2024}, which they then use to give a positive formula for multiplying these basis elements in the cohomology ring of the type A Peterson variety.

In the remainder of the paper, we investigate the singularities of $\Hess(X,H_\Delta)$.  Section~\ref{sec.patches} introduces the relevant terminology and preliminary lemmas required to study singularities using local defining polynomials and the Jacobian Criterion.  Given a Weyl group element $w\in W=N_G(T)/T$ we fix a representative $\dot w\in N_G(T)$ and consider the corresponding \emph{Weyl flag} $\dot wB\in \cb$. The Weyl flags are precisely the $T$-fixed points of the flag variety.
In Section~\ref{sec:singular_loci} we show that the question of whether a given Weyl flag is a smooth point in $ \Hess(X,H_\Delta)$ can be reduced inductively to the same question on the Peterson variety (see Theorem~\ref{thm.singularTpts}).

Since the singular locus of type A Peterson varieties was already computed by the first author and Yong~\cite{Insko-Yong2012}, we obtain a complete combinatorial classification of all singular permutation flags in the type A Hessenberg variety $\Hess(X, H_\Delta)$.  
Recall that, in the type A setting of $G=GL_n(\C)$, each regular element of the Lie algebra $\mathfrak{gl}_n(\C)$ determines a unique composition $\mu = (\mu_1, \ldots, \mu_\ell) \vDash n$ and set partition
\begin{eqnarray}\label{eqn.set-partition.intro}
\{1, \ldots, \mu_1\}\sqcup \{\mu_1+1, \ldots, \mu_1+\mu_2\}\sqcup \cdots \sqcup \{\mu_1+\cdots + \mu_{\ell-1}+1, \cdots , n\}
\end{eqnarray}
of $[n]:=\{1,2,\ldots, n\}$. The maps from regular elements of $\mathfrak{gl}_n(\C)$ to compositions of $n$ in the type A setting is described precisely in Section~\ref{sec.typeA} below. Recall also that the Weyl group of $GL_n(\C)$ is the symmetric group $S_n$ on $n$ letters. Given $w\in S_n$, we call $\dot wB$ a \emph{permutation flag}. The following is Theorem~\ref{thm.typeA.singular} of Section~\ref{sec.typeA.app}.

\begin{THM} \label{THM.2} Suppose $X_\mu\in \mathfrak{gl}_n(\C)$ is the regular element with associated composition $\mu$ and that the permutation flag $\dot wB$ is an element of $\Hess(X_\mu,H_\Delta)$.  Then $\dot w B$ is smooth in $\Hess(X_\mu,H_\Delta)$ if and only if both of the following hold.
\begin{enumerate}
\item The numbers from within each individual set in the partition~\eqref{eqn.set-partition.intro} appear in consecutive entries of the one-line notation for $w$. 
\item The permutation $w$ avoids patterns $123$ and $2143$ within each individual set of numbers in the partition~\eqref{eqn.set-partition.intro}. 
\end{enumerate}
\end{THM}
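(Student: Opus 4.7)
The plan is to combine the inductive reduction at Weyl flags established in Theorem~\ref{thm.singularTpts} with the type A classification of singular permutation flags on the Peterson variety due to Insko and Yong~\cite{Insko-Yong2012}. Given $w \in S_n$ with $\dot wB \in \Hess(X_\mu,H_\Delta)$, Theorem~\ref{thm.singularTpts} translates smoothness at $\dot wB$ into smoothness of an associated Weyl flag on a Peterson variety inside the flag variety of a Levi subgroup of $GL_n(\C)$, and the type A realization of $X_\mu$ makes this Levi explicit in terms of the composition $\mu$ and the positional data of $w$.

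First I would establish the ``if'' direction. Condition~(1) is precisely the statement that, after the reindexing defined by the set partition~\eqref{eqn.set-partition.intro}, $w$ belongs to the Young subgroup $S_{\mu_1}\times\cdots\times S_{\mu_\ell}\subset S_n$. When~(1) holds, $w$ factors as a tuple $(w^{(1)},\ldots,w^{(\ell)})$ with $w^{(i)}\in S_{\mu_i}$; the Levi flag variety produced by the reduction splits as $\cb_{\mu_1}\times\cdots\times\cb_{\mu_\ell}$; and the Peterson variety supplied by Theorem~\ref{thm.singularTpts} becomes the product $\Pet_{\Delta_1}\times\cdots\times\Pet_{\Delta_\ell}$. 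Smoothness at $\dot wB$ therefore decouples into smoothness at each $\dot w^{(i)}B_i$ in $\Pet_{\Delta_i}$, and a blockwise application of the Insko--Yong theorem yields condition~(2).

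For the converse I would show that failure of~(1) forces $\dot wB$ to be singular in $\Hess(X_\mu,H_\Delta)$. When $w$ shuffles entries across distinct blocks of~\eqref{eqn.set-partition.intro}, Theorem~\ref{THM.1} identifies the closure of the Hessenberg--Schubert cell through $\dot wB$ with a regular Hessenberg variety in a Levi flag variety of strictly larger dimension than the product Peterson variety predicted by the block structure of $\mu$; in the local patch machinery of Section~\ref{sec.patches} this dimension excess manifests as a rank drop in the Jacobian of the local defining equations at $\dot wB$. To certify singularity via the Jacobian criterion I would exhibit an explicit linear dependence among the differentials of the patch-defining polynomials arising from a cross-block root contribution.

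The main obstacle is this last step: Theorem~\ref{thm.singularTpts} reduces cleanly to a Peterson variety only in the block-compatible case, so the genuinely mixed case requires a separate analysis in the patches. I expect the computation to be concrete but notationally involved, requiring careful bookkeeping of the root-theoretic data describing the local defining ideal of $\Hess(X_\mu,H_\Delta)$ at $\dot wB$ in order to locate the crossover directions producing the extra singularity.
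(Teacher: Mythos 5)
Your ``if'' direction follows the paper's route: once you know condition~(1) puts you in the block-compatible case, Theorem~\ref{thm.singularTpts}(1) reduces to $\Pet_{J_\mu}\simeq\Pet_{\mu_1}\times\cdots\times\Pet_{\mu_\ell}$, and the Insko--Yong pattern criterion applies blockwise. One small precision worth keeping: condition~(1) is not that $w$ itself lies in a Young subgroup, but that the shortest coset representative $v\in{}^\mu W$ of $w$ is a permutation of $\mu$-blocks (Definition~\ref{def.perm.blocks}), and the equivalence between that and $\Delta(v)=J_\mu$ is exactly Lemma~\ref{lemma.blocks}. In the paper's example $w=654312$ for $\mu=(4,2)$, $w\notin W_\mu$, yet condition~(1) holds; so the ``belongs to the Young subgroup'' phrasing would misclassify this smooth point.

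The converse is where your plan has a genuine gap. You write that Theorem~\ref{thm.singularTpts} reduces to a Peterson variety ``only in the block-compatible case,'' so the mixed case needs a separate patch computation. But Theorem~\ref{thm.singularTpts} already has a part~(2): whenever $\Delta(v)\subsetneq J$ — which by Lemma~\ref{lemma.blocks} is precisely the failure of condition~(1) — the point $\dot wB$ is singular in $\Hess(X_J)$. The Jacobian/patch analysis you propose to redo was done once and for all inside the proof of Theorem~\ref{thm.singularTpts}(2), using Lemma~\ref{lemma.singular2} to produce a generator of the patch ideal with no linear term. No new argument is needed; just cite that part of the theorem. Moreover the alternative route you sketch — comparing the dimension of $\overline{C_w\cap\Hess(X_\mu)}$ from Theorem~\ref{THM.1} against a product Peterson variety and inferring a Jacobian rank drop — does not work as stated: $\overline{C_w\cap\Hess(X_\mu)}$ is a proper subvariety whose dimension is governed by $\des(w)$, not by the $\mu$-block structure, and its dimension or singularity at $\dot wB$ does not by itself control whether $\Hess(X_\mu)$ is singular there. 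The clean argument is the paper's two-case use of Theorem~\ref{thm.singularTpts} together with Lemma~\ref{lemma.blocks}.
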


For example, if $\mu = (4,2)$ then the associated set partition is $\{1,2,3,4\}\sqcup\{5,6\}$ and $654312\in S_6$ corresponds to a permutation flag at which $\Hess(X_{(4,2)}, H_\Delta)$ is smooth, while $\Hess(X_{(4,2)}, H_\Delta)$  is singular at the permutation flags for $651324\in S_6$ and $521634\in S_6$.
As a corollary of Theorem~\ref{THM.2}, we obtain a precise formula for the number of smooth permutation flags in Corollary~\ref{cor.TypeA.singcount} below.

To complete our study of singular Weyl flags in all Lie types, we give a complete description of the singular locus of the Peterson variety in the flag variety of any simple complex algebraic group in Theorem~\ref{thm.Pet-singularities} of Section~\ref{sec.Peterson}. This extends the results of~\cite{Insko-Yong2012} to all Lie types.  Recall that the Weyl flag $\dot wB$ is an element of the Peterson variety $\Pet_\Delta$ if and only if $w$ is the longest element in a  subgroup of $W$ generated by simple reflections.  Thus, each Weyl flag in $\Pet_\Delta$ uniquely determines a parabolic subalgebra of $\fg$.  The question of whether $\dot wB$ is singular reduces to particular properties of this corresponding parabolic subalgebra. Our study of singular Weyl flags utilizes the classification of maximal parabolic subalgebras in $\fg$ of Hermitian type and the rich structure of associated root posets; \cite{EHP2014} provides an expository account of this classification. 

Finally, we apply our results for Peterson varieties to: prove that all regular Hessenberg varieties $\Hess(X,H_\Delta)$ are singular outside the regular semisimple (i.e., toric variety) case (see Corollary~\ref{cor.reg.singular}), give a root theoretic classification of all singular Hessenberg--Schubert varieties (see Theorem~\ref{thm.smooth.Hess-Schub}), and a combinatorial classification of all singular Hessenberg--Schubert varieties in the type A case (see Corollary~\ref{cor.typeA.smooth.Hess-Schub}).

In Sections~\ref{sec.Prelim} and \ref{sec.reg.Hess} we introduce relevant notation and facts regarding the geometry and combinatorics of reductive algebraic groups, flag varieties, and regular Hessenberg varieties.  In Section~\ref{sec.Hessenberg-Schubert} we study the closures of Hessenberg--Schubert cells in regular Hessenberg varieties $\Hess(X,H_{\Delta})$ associated to the minimal indecomposable Hessenberg space and prove Theorem~\ref{THM.1}.  In Section~\ref{sec:ktheory} we calculate the K-theory and cohomology classes of Hessenberg--Schubert varieties.  In the last four sections of this paper, we study the singularieties of $\Hess(X,H_{\Delta})$. Section~\ref{sec.patches} includes necessary background and preliminary results. Section ~\ref{sec:singular_loci} identifies when the Weyl flag $\dot wB$ is singular inside a regular Hessenberg variety.   Section~\ref{sec.typeA.app} focuses on variations and applications of this result in type A, proving Theorem~\ref{THM.2}. Finally, Section~ \ref{sec.Peterson} computes the singular loci of Peterson varieties in all Lie types and explores applications.

\noindent\textbf{Acknowledgements.}  The second author is partially supported by NSF grants DMS-1954001 and CAREER grant DMS-2237057.  The third author is partially supported by Simons Collaboration Grant 359792.

%%%%%%%%%%%%%%%%%%%%%%%%%%
 
\section{Preliminaries} \label{sec.Prelim}
We now introduce relevant notation and facts regarding reductive algebraic groups, flag varieties, and Hessenberg varieties.

\subsection{Algebraic groups, Weyl groups, and coset representatives} \label{sec.alg.gps}

Let $G$ be a complex reductive algebraic group, let $B$ be a Borel subgroup of $G$, and let $\fg$ and $\fb$ denote their respective Lie algebras.  Let $T\subset B$ be a fixed maximal torus with Lie algebra $\ft$ and let $B^-$ denote the opposite Borel subgroup, that is, the unique Borel subgroup such that $B\cap B^-=T$.  Denote  the maximal unipotent subgroups of $B$ and $B^{-}$ by $U$ and $U^{-}$, respectively.   The Weyl group of $G$ is $W:=N_G(T)/T$, and we fix a representative $\dot w\in N_G(T)$ for each Weyl group element $w\in W$. We write $\cb:=G/B$ for the flag variety of $G$.

Let $\Phi \subset \ft^*$ denote the root system of $\fg$. We denote the root space corresponding to $\gamma\in \Phi$ by $\fg_\gamma$ and choose positive roots $\Phi^+\subset \Phi$ such that 
\[
\fb=\ft \oplus \bigoplus_{\gamma\in \Phi^+}\fg_\gamma.
\]  
For each $\gamma\in \Phi$, we fix a root vector $E_\gamma\in \fg_\gamma$.  Denote by $\Delta\subset \Phi$ and $\Phi^-\subset \Phi$ the set of simple roots and negative roots, respectively. We set $\Delta^-:= \{-\alpha\mid \alpha\in \Delta\}$. 
Recall that $|\Delta|$ is the \emph{rank of $\Phi$}.  
Given a simple root $\alpha_i\in \Delta$, let $s_i\in W$ be the corresponding simple reflection. We may assume that our fixed root vectors are permuted under the adjoint action of $W$, so $\Ad(\dot w)(E_\gamma) = E_{w(\gamma)}$ for all $w\in W$.

For each $\gamma\in \Phi$ define the corresponding \emph{root subgroup} by
\begin{eqnarray}\label{eqn.root.subgp}
U_\gamma:= \{\exp(c_\gamma E_\gamma) \mid c_\gamma\in \C\},
\end{eqnarray}
where $\exp: \fg \to G$ denotes the exponential map. We use root subgroups in Section~\ref{sec.patches} below to define local coordinates of the flag variety at a point. Given two roots $\gamma,\beta\in \Phi$, we let $c_{\gamma, \beta}\in \C$ denote the structure constant such that $[E_{\gamma}, E_{\beta}] = c_{\gamma, \beta}E_{\gamma+\beta}$.  In particular, we have $c_{\gamma,\beta}\neq 0$ if and only if $\gamma+\beta \in \Phi$. 

The root system $\Phi$ has a partial order defined by the rule that $ \beta \prec \alpha$ if and only if $\alpha-\beta$ is a sum of positive roots. When $\Phi$ is irreducible, $\Phi^+$ contains a unique maximal element, called the \emph{highest root} and denoted by $\theta\in \Phi^+$ (see~\cite[Table 2, pg.~66]{Humphreys-LieAlg}).   We note that $-\theta\in \Phi^-$ is the unique minimal element of $\Phi$ with respect to this partial order.  Our notational conventions for each irreducible root system are as in~\cite{Humphreys-LieAlg}; this arises in some of our case-by-case work in Section~\ref{sec.Peterson}.

Recall that the torus $T$ acts on $\cb$ by left multiplication and the fixed point set is $\cb^T =\{\dot wB\mid w\in W\}$.  We call the points of $\cb^T$ {\em Weyl flags}, or, in type A, {\em permutation flags}.  The Borel subgroup $B$ determines a decomposition of $G$ into double cosets $G = \bigsqcup_{w \in W} B\dot wB$, known as the Bruhat decomposition.  
Passing to the quotient $G/B$ one obtains a decomposition $\cb = \bigsqcup_{w \in W} B\dot wB/B$ of the flag variety $\cb$ into $B$-orbits.
The orbits are called \emph{Schubert cells} and denoted $C_w := B\dot wB/B$.  
For each $w\in W$, the closure $X_w = \overline{B\dot wB/B}$ is called the \emph{Schubert variety} corresponding to $w$.

For each $w\in W$ let 
\[
\inv(w):= \{ \gamma\in \Phi^+ \mid w(\gamma)\in \Phi^- \}
\]
denote the set of \emph{root inversions} of $w$.  It is well-known that the a Schubert cell is isomorphic to affine space of dimension the \emph{length} of $w$, so $C_w\cong\C^{\ell(w)}$, where $\ell(w):=|\inv(w)|$.  The set of simple root inversions of $w$ is also referred to as the set of \emph{(right) descents} of $w$, written $\des(w)=  \inv(w)\cap \Delta$.

Given a subset $J\subseteq \Delta$ of simple roots, we write $J^-:=\{-\alpha \mid \alpha\in J\}$ for the corresponding subset of negative simple roots.  Choose some $S_J\in \ft$ such that $\alpha(S_J)=0$ if and only if $\alpha\in J$. Each subset $J$ of simple roots determines a standard parabolic subgroup $P_J \subseteq G$ with Levi decomposition $P_J = L_JU_J$.  Here $L_J$ is the reductive algebraic group with Lie algebra $\fl_J:= \fz_\fg(S_J)$ and root system $\Phi_J$ equal to the subsystem of $\Phi$ spanned by $J$. Set $\Phi_J^{\pm} :=  \Phi_J \cap \Phi^{\pm}$.  Let $\fp_J = \Lie(P_J)$ and $\fu_J = \Lie(U_J)$, so $\fp_J = \fl_J \oplus \fu_J$.  (Note $\fu_J$ includes the simple roots {\em not} in $J$.)  Then $B_J:= B\cap L_J$ is a Borel subgroup of $L_J$ with Lie algebra $\fb_J = \fb\cap \fl_J$.  We denote the flag variety of $L_J$ by $\cb_J:= L_J/B_J$.

Let $W_J := \left< s_\alpha \mid \alpha\in \Delta \right>$ denote the Weyl group of $L_J$, a reflection subgroup of $W$ generated by simple reflections.  We denote by $y_J\in W_J$ the longest element of $W_J$.  When $J=\Delta$, we write $w_0$ rather than $y_\Delta$ for the longest element of $W$, as is typical.  Our work below requires us to consider both left and right cosets of $W_J$ in $W$.  Recall that 
\[
W^J:=\{ w\in W \mid w(\Phi_J^+) \subseteq \Phi^+ \}
\]
is the set of \emph{shortest left coset representatives} for the cosets $W/W_J$.  The set of \emph{shortest right coset representatives} for $W_J\backslash W$ is ${^J}W:= (W^J)^{-1}$.

The longest element $y_J$ of the subgroup $W_J$ plays an important role below.  We use the following fact throughout.
\begin{fact} Let $J\subseteq \Delta$, and let $y_J\in W_J$ be the longest element of the reflection subgroup $W_J$.  Then $y_J (\Phi_J^+) = \Phi_J^-$, $y_J(J) = J^-$, and $y_J(\Phi^+\setminus \Phi_J^+) = \Phi^+\setminus \Phi_J^+$.
\end{fact}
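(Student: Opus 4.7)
The plan is to verify each of the three equalities in turn. All three are standard consequences of basic Weyl-group theory applied both to $W_J$ as a finite Coxeter group in its own right (with root system $\Phi_J$ and simple roots $J$) and to the way $W_J$ acts on the ambient root system $\Phi$. None of the steps are substantial obstacles; the main ingredient worth isolating is a lemma about how elements of $W_J$ affect coefficients on simple roots outside $J$, which drives the third equality.

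For $y_J(\Phi_J^+) = \Phi_J^-$, I would observe that since $y_J \in W_J$ permutes $\Phi_J$, we have $\inv(y_J) \subseteq \Phi_J^+$. The longest element of the finite reflection group $W_J$ has length $|\Phi_J^+|$, so this containment is forced to be an equality, which is exactly the asserted identity.

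For $y_J(J) = J^-$, I would use the first equality to note that $-y_J$ preserves $\Phi_J^+$. Since $-y_J$ is a bijection of the root system $\Phi_J$ (the composition of the Weyl-group element $y_J$ with $-1$), and since $J$ is characterized intrinsically within $\Phi_J^+$ as the subset of roots that cannot be written as a sum of two elements of $\Phi_J^+$, the bijection $-y_J$ must permute $J$. Hence $y_J(J) = -J = J^-$.

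For $y_J(\Phi^+\setminus \Phi_J^+) = \Phi^+\setminus \Phi_J^+$, the key lemma is that any $w \in W_J$ satisfies $w(\beta) - \beta \in \Z J$ for every $\beta \in \Phi$; equivalently, $w$ preserves the coefficient of each simple root $\alpha \notin J$ in the expansion of $\beta$ in the basis $\Delta$. This follows by induction on the word length of $w$ in the generators $\{s_\alpha : \alpha \in J\}$, since $s_\alpha(\beta) = \beta - \langle \beta, \alpha^\vee\rangle \alpha$ modifies only the coefficient of $\alpha$. Given the lemma, any $\beta \in \Phi^+\setminus \Phi_J^+$ has a strictly positive coefficient on some simple root $\alpha_0 \notin J$, and the same coefficient persists in $y_J(\beta)$. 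Since $y_J(\beta)$ is a root, all its coefficients in $\Delta$ must share a sign, forcing $y_J(\beta) \in \Phi^+\setminus \Phi_J^+$. This gives $y_J(\Phi^+\setminus \Phi_J^+) \subseteq \Phi^+\setminus \Phi_J^+$, and the reverse containment follows from bijectivity of $y_J$ on $\Phi$.
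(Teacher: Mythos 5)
The paper records this as a standard Fact and provides no proof, so there is no paper argument to compare against; your proposal must be judged on its own.

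Your overall plan is correct and standard: part (3) via the observation that $W_J$ preserves the $\alpha$-coefficients for $\alpha\notin J$, part (1) via length counting, part (2) via the characterization of simple roots as indecomposable positive roots. The one place you should tighten is the opening step of part (1). You write that ``since $y_J\in W_J$ permutes $\Phi_J$, we have $\inv(y_J)\subseteq\Phi_J^+$.'' If $\inv$ refers to the ambient inversion set with respect to $\Phi^+$ (the paper's convention), this inference does not follow: the fact that $y_J$ permutes $\Phi_J$ says nothing, on its own, about whether $y_J$ might invert a positive root lying outside $\Phi_J$. What \emph{does} rule this out is precisely part (3), whose proof you give independently via the lemma that $w(\beta)-\beta\in\Z J$ for $w\in W_J$. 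So either reorder — prove (3) first, then deduce $\inv(y_J)\subseteq\Phi_J^+$, then count using $\ell(y_J)=\ell_{W_J}(y_J)=|\Phi_J^+|$ — or else work entirely inside $W_J$ as a Coxeter group with root system $\Phi_J$ and simple system $J$, where the assertion that the longest element sends $\Phi_J^+$ to $\Phi_J^-$ is exactly the generic statement about longest elements and requires no reference to the ambient inversion set. Either fix closes the gap; as written, the justification given is insufficient for the containment you claim.
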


Given $y,v\in W$, we say that the product $w=yv$ is \emph{reduced} whenever $\ell(w) = \ell(y)+\ell(v)$.  The inversion sets of reduced products are particularly well-behaved. The following lemmas are used without comment throughout this text.

\begin{lemma} If $y,v\in W$ such that $w=yv$ is reduced, then
\[
\inv(w) = \inv(v)\sqcup v^{-1} \inv(y) \, \emph{ and } \, \inv(w^{-1}) = \inv(y^{-1}) \sqcup y \inv(v^{-1}).
\]
\end{lemma}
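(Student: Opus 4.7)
The plan is to establish the first identity directly from the definition $\inv(w)=\{\gamma\in\Phi^+:w(\gamma)\in\Phi^-\}$, and then derive the second by applying the first to the reduced factorization $w^{-1}=v^{-1}y^{-1}$.

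For the first identity, my approach is to decompose $\Phi^+$ according to whether $\gamma\in\inv(v)$ and track the sign of $yv(\gamma)$ in each case. If $\gamma\notin\inv(v)$, then $v(\gamma)\in\Phi^+$, so $\gamma\in\inv(yv)$ if and only if $v(\gamma)\in\inv(y)$, equivalently $\gamma\in v^{-1}\inv(y)$. If instead $\gamma\in\inv(v)$, then $-v(\gamma)\in\Phi^+$, and $\gamma\in\inv(yv)$ if and only if $-v(\gamma)\notin\inv(y)$. This case analysis yields the containment
\[
\inv(yv)\;\subseteq\;\bigl(v^{-1}\inv(y)\cap(\Phi^+\setminus\inv(v))\bigr)\,\sqcup\,\inv(v),
\]
and hence the unconditional bound $|\inv(yv)|\leq|\inv(y)|+|\inv(v)|$. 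The key step is then to invoke the reducedness hypothesis $\ell(yv)=\ell(y)+\ell(v)$: this forces the bound to be an equality, which in turn forces $v^{-1}\inv(y)\subseteq\Phi^+\setminus\inv(v)$ (so $v^{-1}\inv(y)$ is a genuine subset of $\Phi^+$) and also forces no element of $\inv(v)$ to be sent into $-\inv(y)$ by $v$. Disjointness of $v^{-1}\inv(y)$ and $\inv(v)$ inside $\Phi^+$ is automatic once both are known to sit in $\Phi^+$: a common element $\gamma$ would satisfy $v(\gamma)\in\inv(y)\subseteq\Phi^+$ and $v(\gamma)\in\Phi^-$ at the same time. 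Combining these observations gives $\inv(w)=\inv(v)\sqcup v^{-1}\inv(y)$.

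For the second identity, I observe that $w^{-1}=v^{-1}y^{-1}$ is also a reduced product, since $\ell(w^{-1})=\ell(w)$, $\ell(y^{-1})=\ell(y)$, and $\ell(v^{-1})=\ell(v)$. Applying the first identity with $v^{-1}$ in the role of $y$ and $y^{-1}$ in the role of $v$ then gives
\[
\inv(w^{-1})\;=\;\inv(y^{-1})\,\sqcup\,(y^{-1})^{-1}\inv(v^{-1})\;=\;\inv(y^{-1})\,\sqcup\,y\,\inv(v^{-1}),
\]
which is exactly the claimed formula. The only subtle point in the argument is the sign bookkeeping in the case analysis on $v(\gamma)$; the real content of the lemma is upgrading the always-valid inequality $|\inv(yv)|\leq\ell(y)+\ell(v)$ to equality using reducedness, after which both containment in $\Phi^+$ and disjointness of the two pieces come for free.
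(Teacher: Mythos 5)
Your proof is correct. The paper states this lemma as a standard fact without supplying a proof ("The following lemmas are used without comment throughout this text"), so there is no argument in the paper to compare against, but your treatment is the standard one: the case analysis on whether $v(\gamma)$ is positive gives the inclusion of $\inv(yv)$ into $\bigl(v^{-1}\inv(y)\cap(\Phi^+\setminus\inv(v))\bigr)\sqcup\inv(v)$ and hence the unconditional bound $\ell(yv)\le\ell(y)+\ell(v)$; reducedness forces equality, which pins down both that $v^{-1}\inv(y)\subseteq\Phi^+$ and that the union exhausts $\inv(yv)$; and the second identity follows cleanly by applying the first to the reduced factorization $w^{-1}=v^{-1}y^{-1}$.
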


Reduced products come up most frequently in the context of coset decompositions.

\begin{lemma}\label{lemma.coset} For each $J\subseteq \Delta$ and $w\in W$, there exists unique $v\in W^J$ and $y\in W_J$ such that $w=vy$ is reduced.  Similarly, there exists unique $v'\in {^J}W$ and $y'\in W_J$ such that $w=y'v'$ is reduced.
\end{lemma}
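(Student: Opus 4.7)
The plan is to prove the first decomposition directly and then derive the second by taking inverses. For existence, I would let $v$ be an element of minimum length in the finite coset $wW_J$ and set $y := v^{-1}w \in W_J$, so that $w = vy$.

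To verify $v \in W^J$, I would argue by contrapositive: if $v(\gamma) \in \Phi^-$ for some $\gamma \in \Phi_J^+$, then the standard length identity gives $\ell(vs_\gamma) < \ell(v)$, and since $vs_\gamma \in vW_J = wW_J$ this contradicts the minimality of $\ell(v)$.  Reducedness $\ell(vy) = \ell(v) + \ell(y)$ then follows from a direct computation of $\inv(vy)$.  Partitioning $\Phi^+ = \Phi_J^+ \sqcup (\Phi^+ \setminus \Phi_J^+)$ and using that $y \in W_J$ preserves $\Phi^+ \setminus \Phi_J^+$ while $v$ sends $\Phi_J^-$ into $\Phi^-$ (because $v(\Phi_J^+) \subseteq \Phi^+$), a short verification shows $\inv(vy) \cap \Phi_J^+ = \inv(y)$ and $\inv(vy) \cap (\Phi^+ \setminus \Phi_J^+) = y^{-1}\inv(v)$, yielding a disjoint-union decomposition of $\inv(vy)$ whose sizes add to $\ell(v) + \ell(y)$.

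For uniqueness, suppose $w = v_1 y_1 = v_2 y_2$ with $v_i \in W^J$ and $y_i \in W_J$, and set $u := v_1^{-1}v_2 \in W_J$.  If $u \neq e$, then since every nontrivial Coxeter element has a right descent, there is $\alpha \in J$ with $u(\alpha) \in \Phi_J^-$.  But then $v_2(\alpha) = v_1(u(\alpha)) \in v_1(\Phi_J^-) \subseteq \Phi^-$, contradicting $v_2 \in W^J$.  Hence $u = e$, so $v_1 = v_2$, and this forces $y_1 = y_2$.

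The second statement follows formally by applying the first to $w^{-1}$: if $w^{-1} = \tilde v \tilde y$ is the reduced factorization with $\tilde v \in W^J$ and $\tilde y \in W_J$, then inverting gives $w = \tilde y^{-1} \tilde v^{-1}$ with $\tilde y^{-1} \in W_J$ and $\tilde v^{-1} \in {^J}W$, and reducedness and uniqueness transfer because length is preserved under $w \mapsto w^{-1}$.  No genuine obstacle arises; this is a classical Coxeter-theoretic fact, and the only care required is the bookkeeping of inversions in $\Phi_J^+$ versus $\Phi^+ \setminus \Phi_J^+$.
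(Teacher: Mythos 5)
Your proof is correct. Note, however, that the paper does not prove this lemma at all: it appears in the preliminaries section under the heading ``The following lemmas are used without comment throughout this text,'' so it is cited as a standard Coxeter-theoretic fact rather than argued. Your argument is the textbook one for the parabolic coset decomposition: take a minimal-length representative $v$ of $wW_J$, use the reflection length criterion ($\ell(vs_\gamma) < \ell(v)$ iff $v(\gamma) \in \Phi^-$, for $\gamma\in\Phi^+$) to show $v \in W^J$, split $\inv(vy)$ along $\Phi_J^+ \sqcup (\Phi^+\setminus\Phi_J^+)$ to get reducedness, prove uniqueness via the existence of a right descent in $J$ for any nontrivial element of $W_J$, and transfer to the right-coset statement by inverting. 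One minor terminological slip: in the uniqueness step you write that ``every nontrivial Coxeter element has a right descent,'' but you mean every nontrivial element of the Coxeter group $W_J$; a Coxeter element is a product of all simple generators, which is not what is meant. This does not affect the substance of the argument.
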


 \subsection{The type A case} \label{sec.typeA} 
 Let $G=GL_n(\C)$ be the reductive group of $n\times n$ invertible matrices with Lie algebra $\fg=\mathfrak{gl}_n(\C)$ of $n\times n$ matrices.  We refer to this case as the \emph{type A case}.  This is a key example throughout, and many of our main theorems below have a combinatorial interpretation in this case (see Example~\ref{ex.A3.2} and~Section~\ref{sec.typeA.app}).  In this setting, $B$ (respectively, $\fb$) is the subgroup of $GL_n(\C)$  (respectively, subalgebra of $\mathfrak{gl}_n(\C)$) of upper triangular matrices, $T$ (respectively, $\ft$) is the diagonal subgroup (respectively, subalgebra), and $W\simeq S_n$ is the symmetric group on $n$ letters.  Given a permutation $w\in S_n$ we denote its one-line notation by $[w(1), w(2), \ldots, w(n)]$. If $n<10$ we drop the brackets and commas from this notation for simplicity; for example, $[2,3,5,8,6,7,4,1] = 23586741$.

Define $\epsilon_i\in\ft^*$ be declaring that, for all $X=\mathrm{diag}(x_1, x_2, \ldots, x_n)\in \ft$, we have $\epsilon_i(X)=x_i$. 
The type A root system is
\[
\Phi=\{\epsilon_i-\epsilon_j \mid i\neq j, 1\leq i,j\leq n\}\subset \ft^*
\]  
with positive roots $\Phi^+ = \{ \epsilon_i-\epsilon_j\mid i<j \}$ and simple roots $\Delta = \{ \alpha_i:=\epsilon_i-\epsilon_{i+1}\mid i\in [n-1] \}$.
Let $E_{ij}$ denote the elementary matrix with $1$ in the $(i,j)$-entry and $0$ in every other entry.  We fix the root vector corresponding to the root $\gamma = \epsilon_i-\epsilon_j$ to be $E_\gamma = E_{ij}$.  If we identify the root $\epsilon_i-\epsilon_j$ with the ordered pair $(i,j)$ then the set of root inversions of a given permutation is exactly the set of inversions.  

Recall that the sequence $\mu = (\mu_1, \ldots \mu_\ell)$ is a \emph{(strong) composition} of $n$ whenever $\mu_i>0$ for all $i$ and $\mu_1+\cdots + \mu_\ell =n$.  Given a sequence $\mu$ we write $\mu\vDash n$ to denote the fact that it is a composition of $n$.
We identify each subset $J\subseteq \Delta$ with a strong composition $\mu\vDash n$ via  
\[
\mu = (\mu_1,\ldots, \mu_\ell) \mapsto J_\mu:=\Delta \setminus \{ \alpha_{\mu_1}, \alpha_{\mu_1+\mu_2}, \ldots, \alpha_{\mu_1+\ldots + \mu_{\ell-1}} \}.
\] 
The composition $\mu$ also defines a set partition of $[n]:=\{1,2,\ldots, n\}$.  We call the elements of this set partition \emph{$\mu$-blocks}, defined by
\begin{eqnarray}\label{eqn.set-partition}
\{1, \ldots, \mu_1\} \sqcup \{ \mu_1+1, \ldots, \mu_1+\mu_2 \} \sqcup \cdots \sqcup \{\mu_1 + \cdots+ \mu_{\ell-1}+1, \ldots, n\}.
\end{eqnarray}
We say $i$ and $j$ are in the \emph{same $\mu$-block} whenever both indices belong to the same block in the set partition defined by $\mu$.
The subgroup $W_\mu:=W_{J_\mu}$ is the Young subgroup permuting the entries within each $\mu$-block, so
\[
W_\mu :=S_{\{1, \ldots, \mu_1\}} \times S_{\{\mu_1+1, \ldots, \mu_1+\mu_2\}} \times \cdots \times S_{\{\mu_1+\ldots+\mu_{\ell-1}+1, \ldots, n\}}.
\]
The set of shortest left coset representatives ${^\mu}W:={^{J_\mu}}W$ has a particularly nice description in terms of one-line notation. Given $v\in S_n$ we get that $v\in {^\mu}W$ if and only if the numbers in each $\mu$-block appear in increasing order in the one-line notation $[v(1),v(2), \ldots, v(n)]$ for $v$ as we read from left to right.

\begin{example}\label{ex.8.(4,3,1).1} Let $n=8$ and $\mu=(4,3,1)\vDash 8$.  Then the $\mu$-blocks are $\{1,2,3,4\}$, $\{5,6,7\}$, $\{8\}$.  Now $v=15682374\in {^{(4,3,1)}}W$ since the numbers from each block are increase from left to right, while $v'=23586741 \notin {^{(4,3,1)}}W$ as $1$ appears after~$2$.
\end{example}

Given a subset $J_\mu$ of simple roots corresponding to the composition $\mu$, we write $L_\mu:= L_{J_\mu}$ for corresponding Levi subgroup of $G$ to simplify notation.  Similarly, we set $\fl_\mu := \fl_{J_\mu}  = \Lie(L_\mu)$.

%%%%%%%%%%%%%%%%%%%%%
 
 \subsection{Hessenberg Varieties}\label{sec.Hessenberg}
In this section we define Hessenberg varieties and give a few basic examples.  Recall that the group $G$ acts on $\fg$ by the adjoint action, denoted herein by $\Ad(g)(X)$ for all $g\in G$ and $X\in \fg$. 

\begin{definition} \label{def:Hessenberg_space}
A \emph{Hessenberg space $H$} is a linear subspace 
of $\fg$ such that $ \fb \subseteq H$ and $H$ is $\Ad(B)$-invariant.
\end{definition}

Each Hessenberg space uniquely determines, and is determined by, a subset of positive roots 
\[
\Phi_H:=\{\gamma\in \Phi^+ \mid \fg_{-\gamma} \subseteq H\} \subseteq \Phi^+.
\] 
Note that, as $H$ is $\Ad(B)$-invariant, this subset has the following property: if $\beta \in \Phi_H$ and $\gamma \in \Phi^-$ is a negative root such that $\beta+\gamma \in \Phi^+$ then the sum $\beta+\gamma$ is an element of $\Phi_H$~\cite[Lemma 1]{DPS1992}.  In other words $\Phi_H$ must be a lower order ideal of $\Phi^+$ under the partial order $\prec$, and $H$ can be recovered from $\Phi_H$ as 
\[ 
H = \fb \oplus  \bigoplus_{\gamma \in \Phi_H} \fg _{-\gamma} .
\] 
Given a lower order ideal $\ca \subseteq \Phi^+$, we denote the corresponding Hessenberg space by~$H_\ca$.

\begin{definition}\label{def:Hessenberg_variety}
Given an element $X \in \fg$ and a Hessenberg space $H$, the associated \emph{Hessenberg variety} is the subvariety of the flag variety defined by
\[ 
\Hess(X,H) = \left \{ gB \in \cb \mid Ad(g^{-1})(X) \in H \right \}.
\]
\end{definition} 
When $H=\fg$, the condition defining $\Hess(X,H)$ is vacuously true for all $X\in \fg$, and $\Hess(X,H)=\cb$ in this case.  Another case of note is when $X\in \fg$ is nilpotent and $H=\fb$.  Then $\Hess(X,H)=:\cb_X$ is the Springer fiber over $X$.   

We say the Hessenberg subspace $H \subseteq \fg$ is \emph{indecomposable} whenever $\Delta \subseteq \Phi_H$.  The reason for this terminology is that $\Hess(X,H)$ is connected for all $X\in \fg$ if and only if $H$ is indecomposable~\cite{Precup2015}.  Considering the partial order on Hessenberg spaces defined by inclusion, the \emph{minimal indecomposable Hessenberg space} in $\fg$ is
\[
H_\Delta := \fb \oplus \bigoplus_{\alpha\in \Delta} \fg_{-\alpha}.
\]
This says that the minimal indecomposable Hessenberg space is the one determined by the lower order ideal of simple roots.  This paper considers only Hessenberg varieties corresponding to the Hessenberg space $H_\Delta$.  As such, we write $\Hess(X,H_\Delta) =: \Hess(X)$ from here on.

%%%%%%%%%%%%%%%%%%%%%
%%%%%%%%%%%%%%%%%%%%

\section{Regular Hessenberg varieties}\label{sec.reg.Hess}

An element $X$ of the Lie algebra $\fg$ is called \emph{regular} if $\dim Z_G(X) = \dim (T)$, which is the smallest possible.  If $X$ is a regular element, we say that $\Hess(X,H)$ is a \emph{regular Hessenberg variety}.  

For each subset $J\subseteq \Delta$ define a nilpotent element $N_J:= \sum_{\alpha\in J} E_\alpha \in \fg$. Note that $N_J$ is a regular nilpotent element of the Levi subalgebra $\fl_J:= \fz_{\fg}(S_J)$, where $S_J$ is as defined in Section~\ref{sec.alg.gps}.   Then 
\begin{eqnarray}\label{eqn.reg.element}
X_J = S_J + N_J \in \fg
\end{eqnarray}
is a regular element of $\fg$, and every regular element of $\fg$ is conjugate to one of this form~\cite[\S4.8]{HumphreysCCAG}. An example of a regular element $X_J$ in the Lie algebra $\mathfrak{sp}_4(\C)$ appears in Example~\ref{ex.C2.1} below.  If $G=GL_n(\C)$ and $J=J_\mu$ for some $\mu\vDash n$, then we write $X_\mu$ for $X_{J_\mu}$ and similarly for its nilpotent and semisimple parts.

\begin{example} Let $G=GL_4(\C)$ and $\mu=(2,2)$, so $J_\mu = \{\alpha_1, \alpha_3\}$.  Under our conventions, 
\[
N_{\mu} =  E_{12}+E_{34} = \begin{bmatrix} 0 & 1 & 0 & 0\\ 0 & 0 & 0 & 0\\ 0 & 0 & 0 & 1\\ 0 & 0 & 0 & 0 \end{bmatrix}.
\]
Note that $N_{\mu}$ is a regular element of the Levi subalgebra $\fl_{\mu}:= \ft \oplus \C\{E_{12}, E_{21}, E_{34}, E_{43}\} \subset \mathfrak{gl}_4(\C)$.
Now we may take $S_{\mu}$ to be any diagonal matrix whose centralizer under the Lie bracket is equal to $\fl_\mu$; here we set $S_{\mu} = \textup{diag}(1,1,-1,-1)$. Now
\[
X_{\mu} =\begin{bmatrix}1 & 0 & 0 & 0\\ 0 & 1 & 0 & 0\\ 0 & 0 & -1 & 0\\ 0 & 0 & 0 & -1 \end{bmatrix}+ \begin{bmatrix}0 & 1 & 0 & 0\\ 0 & 0 & 0 & 0\\ 0 & 0 & 0 & 1\\ 0 & 0 & 0 & 0 \end{bmatrix}
=\begin{bmatrix}1 & 1 & 0 & 0\\ 0 & 1 & 0 & 0\\ 0 & 0 & -1 & 1\\ 0 & 0 & 0 & -1 \end{bmatrix}
\]
in this case.  
\end{example}

This paper focuses on the structure of regular Hessenberg varieties $\Hess(X_J):=\Hess(X_J, H_\Delta)$ corresponding to the minimal indecomposable Hessenberg space.  There are two special cases of note.  When $J=\Delta$, we may take $S_J=0$ and $X_\Delta=:N$ is a regular nilpotent element.  The Hessenberg variety $\Pet_{\Delta}:=\Hess(N, H_\Delta)$ is called the \emph{Peterson variety}.  The second case occurs when $J=\emptyset$, so $X_J=:S$ is a regular semisimple matrix and $\Hess(S):= \Hess(S,H_\Delta)$ is the toric variety associated to the Weyl chambers. We refer the reader to the introduction, where relevant results in each of these cases is discussed in detail.

It is well known that all regular Hessenberg varieties are irreducible and paved by affines, with cells determined by the nonempty intersections of $\Hess(X_J,H)$ with the Schubert cells~\cite{Tymoczko2006, Precup2013}.  The cells and their dimension have a combinatorial description, particularly when the Hessenberg space is the minimal indecomposable one. The following is a combination of~\cite[Lemma 1, Cor.~3, Cor.~14]{Precup2018} for the case in which $H=H_\Delta$.

\begin{prop}\label{prop.affine.paving} Let $X_J$ be a regular element defined as in~\eqref{eqn.reg.element} above.   For all $w\in W$,
\begin{eqnarray}\label{eqn.nonempty}
C_w\cap \Hess(X_J) \neq \emptyset \Leftrightarrow \Ad(\dot w^{-1})(N_J)\in H_\Delta \Leftrightarrow w^{-1}(J)\subset \Delta^-\sqcup \Phi^+,
\end{eqnarray}
and if the intersection is nonempty, then
\[
C_w\cap \Hess(X_J)) \cong \mathbb{C}^{|\des(w)|}.
\]
Furthermore, $\Hess(X_J)=\overline{C_{w_0}\cap \Hess(X_J)}$ is irreducible of dimension $|\Delta|$, where $w_0$ is the longest element of $W$.
\end{prop}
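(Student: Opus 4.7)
The plan is to analyze the Hessenberg condition locally on each Bruhat cell and extract all three assertions from a careful coordinate parameterization. First, since $S_J \in \ft \subseteq H_\Delta$ and $\Ad(\dot w^{-1})$ preserves $\ft$, we always have $\Ad(\dot w^{-1})(S_J) \in H_\Delta$, so $\Ad(\dot w^{-1})(X_J) \in H_\Delta$ if and only if $\Ad(\dot w^{-1})(N_J) \in H_\Delta$. Using the convention $\Ad(\dot w^{-1})(E_\alpha) = E_{w^{-1}(\alpha)}$, we expand $\Ad(\dot w^{-1})(N_J) = \sum_{\alpha \in J} E_{w^{-1}(\alpha)}$; since $E_\gamma \in H_\Delta$ exactly when $\gamma \in \Phi^+ \cup \Delta^-$, this sum belongs to $H_\Delta$ precisely when $w^{-1}(J) \subseteq \Phi^+ \cup \Delta^-$. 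This yields the rightmost equivalence of~\eqref{eqn.nonempty}, together with the ``$\Leftarrow$'' direction of the leftmost equivalence, since then $\dot wB$ itself witnesses non-emptiness.

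For the reverse implication and the dimension formula, I would parameterize $C_w = \{u\dot wB \mid u \in U_{w^{-1}}\}$ with $U_{w^{-1}} = \prod_{\beta \in \inv(w^{-1})} U_\beta$, using coordinates $c_\beta \in \C$ via $u_\beta(c_\beta) = \exp(c_\beta E_\beta)$. The condition $u\dot wB \in \Hess(X_J)$ translates to $\Ad(\dot w^{-1}u^{-1})(X_J)\in H_\Delta$; expanding $\Ad(u^{-1})$ as a finite series of iterated brackets against $X_J$ and projecting modulo $H_\Delta$ produces one polynomial equation in the $c_\beta$ for each non-simple negative root $-\gamma$ with $\gamma\in\Phi^+\setminus\Delta$. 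At linear order, the coefficient of $c_\beta$ in the equation indexed by $-\gamma$ arises from $\Ad(\dot w^{-1})[E_\beta, S_J + N_J]$, contributing a term $\beta(S_J)E_{w^{-1}\beta}$ together with bracket terms of the form $c_{\beta,\alpha} E_{w^{-1}(\beta+\alpha)}$ for $\alpha \in J$.

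The crux is that this polynomial system has a triangular structure: processing the equations in order of increasing height of the indexing root $\gamma$, each equation either solves uniquely for one coordinate $c_\beta$ as a polynomial in previously processed coordinates, or else imposes a constant-term obstruction forcing a component of $\Ad(\dot w^{-1})(N_J)$ to vanish modulo $H_\Delta$. The regularity of $X_J$---specifically that $\alpha(S_J) \neq 0$ for $\alpha\notin \Phi_J$ and that $N_J$ is a regular nilpotent element of $\fl_J = \fz_\fg(S_J)$---guarantees that the relevant pivots are nonzero. Careful bookkeeping then identifies the free coordinates as exactly those $c_\beta$ with $\beta = -w(\alpha)$ for $\alpha \in \des(w)$, giving $C_w \cap \Hess(X_J) \cong \C^{|\des(w)|}$; simultaneously, the obstructions vanish precisely when $\Ad(\dot w^{-1})(N_J)\in H_\Delta$, completing the ``$\Rightarrow$'' direction of the leftmost equivalence of~\eqref{eqn.nonempty}.

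The remaining assertions then follow immediately from the affine paving. Since $\des(w_0) = \Delta$ and $\des(w)\subsetneq \Delta$ for every $w \neq w_0$, the cell $C_{w_0}\cap \Hess(X_J) \cong \C^{|\Delta|}$ is the unique top-dimensional cell, and so $\Hess(X_J) = \overline{C_{w_0}\cap \Hess(X_J)}$ is irreducible of dimension $|\Delta|$. The main obstacle throughout is establishing the triangular structure in paragraph three: one must verify that higher-order terms from other coordinates do not destroy the pivot structure when the equations are processed in the chosen order, and one must exhibit explicitly the bijection $\alpha \mapsto -w(\alpha)$ identifying $\des(w)$ with the set of free variables inside $\inv(w^{-1})$.
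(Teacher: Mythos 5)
The paper itself gives no proof of this proposition; it is cited directly from Precup's prior work \cite[Lemma 1, Cor.~3, Cor.~14]{Precup2018}, which in turn rests on the affine-paving machinery of \cite{Tymoczko2006, Precup2013}. Your first paragraph is correct and complete: using $S_J \in \ft \subseteq H_\Delta$, the $W$-equivariance of the root vectors, and the description of $H_\Delta$ as $\ft \oplus \bigoplus_{\gamma \in \Phi^+\sqcup\Delta^-}\fg_\gamma$ gives the rightmost equivalence and the easy direction of the leftmost. Your parameterization of $C_w$ by $\prod_{\beta \in \inv(w^{-1})} U_\beta$, and the accounting that there are $\ell(w)$ coordinates and $\ell(w) - |\des(w)|$ equations (one for each $\gamma \in \Phi^+$ with $w^{-1}(\gamma) \in \Phi^-\setminus\Delta^-$), are all correct and set things up the way the cited proofs do.

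The gap is in the claimed dichotomy in your third paragraph, which is not quite right and is precisely where the real work lies. You write that each equation ``either solves uniquely for one coordinate $c_\beta$\ldots or else imposes a constant-term obstruction.'' The linear part of the equation indexed by $\gamma$ is $\gamma(S_J)\,c_\gamma \;-\; \sum_{\alpha\in J,\ \gamma-\alpha\in\inv(w^{-1})} c_{\gamma-\alpha,\alpha}\,c_{\gamma-\alpha}$, and the constant term is $\pi_\gamma(X_J)$, which (in the nonempty case) vanishes because $\gamma \notin J$. Now take $\gamma \in \Phi_J^+\setminus J$: then $\gamma(S_J)=0$, so the pivot $c_\gamma$ is absent, yet there is also no constant obstruction. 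Such an equation neither obviously ``solves for $c_\gamma$'' nor obstructs, so your dichotomy fails as stated. One must instead argue that the $N_J$-contributed terms $c_{\gamma-\alpha,\alpha}\,c_{\gamma-\alpha}$ provide a usable pivot, and verify---using that $N_J$ is regular nilpotent in $\fl_J$---that across all such $\gamma$ these pivots can be chosen without conflict; this is a nontrivial bookkeeping argument that your sketch does not carry out (and indeed acknowledges as ``the main obstacle''). A second, smaller gap is the irreducibility conclusion: having a unique top-dimensional cell in an affine paving does not by itself imply irreducibility, since a priori there could be an isolated lower-dimensional component. You need to add that $\Hess(X_J)$ is cut out in $\cb$ by $|\Phi^+\setminus\Delta|$ equations, so by Krull's principal ideal theorem every irreducible component has dimension at least $|\Delta|$; combined with the paving bound $\dim \Hess(X_J) \le |\Delta|$ this forces equidimensionality, after which the uniqueness of the top cell does give irreducibility.
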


We refer to the nonempty intersections $C_w\cap \Hess(X_J)$ as \emph{Hessenberg--Schubert cells} and their closures $\overline{C_w\cap \Hess(X_J)}$ as \emph{Hessenberg--Schubert varieties}.

\begin{definition} We say that $w\in W$ is $J$-\emph{admissible} if $C_w\cap \Hess(X_J)\neq \emptyset$. We say that $w$ is \emph{admissible} if it is $\Delta$-admissible, that is, if $\dot wB$ is an element of the Peterson variety $\Pet_\Delta$ in $\cb$. 
\end{definition}

We obtain the following description of admissible Weyl group elements from~\cite[Lemma 5.7]{Harada-Tymoczko2017}. 

\begin{corollary} \label{cor.Peterson.cells} 
For all $w\in W$,  
\begin{eqnarray*}
C_w\cap \Pet_\Delta \neq \emptyset \Leftrightarrow  w=y_K \textup{ for some $K\subseteq \Delta$},
\end{eqnarray*}
that is, $w$ is admissible if and only if it is the longest element of a reflection subgroup of $W$ generated by a subset of simple reflections.
\end{corollary}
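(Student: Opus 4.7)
The plan is to prove both directions of the equivalence separately. The $(\Leftarrow)$ direction follows directly from Proposition~\ref{prop.affine.paving}, while $(\Rightarrow)$ requires a structural analysis of $w^{-1}$ culminating in the identification $\inv(w^{-1}) = \Phi_K^+$.

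For $(\Leftarrow)$, we take $w = y_K$ and verify the criterion of Proposition~\ref{prop.affine.paving}. Because $y_K$ is an involution, $w^{-1}(\Delta) = y_K(K) \sqcup y_K(\Delta \setminus K)$, and by the Fact recorded in Section~\ref{sec.alg.gps} this set is contained in $K^- \sqcup (\Phi^+ \setminus \Phi_K^+) \subseteq \Delta^- \sqcup \Phi^+$, as required.

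For $(\Rightarrow)$, we assume $w^{-1}(\Delta) \subseteq \Delta^- \sqcup \Phi^+$ and set $K := \des(w^{-1})$; the goal is to prove $\inv(w^{-1}) = \Phi_K^+$, since the unique element of $W$ with this inversion set is $y_K$, and then $w = y_K$ follows (using again that $y_K$ is an involution). The hypothesis makes the map $\sigma : K \to \Delta$, $\sigma(\alpha) := -w^{-1}(\alpha)$, a well-defined injection; let $K' := \sigma(K) \subseteq \Delta$. Because $w^{-1}$ acts as a Cartan-preserving automorphism of $\Phi$, extending $\sigma$ linearly yields a root-system isomorphism $\Phi_K \to \Phi_{K'}$, and in particular $w^{-1}(\Phi_K^+) = \Phi_{K'}^-$, which immediately gives $\Phi_K^+ \subseteq \inv(w^{-1})$.

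The substance of the argument is the reverse containment $\inv(w^{-1}) \subseteq \Phi_K^+$. A preparatory step shows $w^{-1}(\Delta \setminus K) \subseteq \Phi^+ \setminus \Phi_{K'}^+$: the hypothesis places $w^{-1}(\Delta \setminus K)$ inside $\Phi^+$, and if some $\beta \in \Delta \setminus K$ had $w^{-1}(\beta) \in \Phi_{K'}^+ = w^{-1}(\Phi_K^-)$, then $\beta$ would lie in $\Phi_K^- \cap \Phi^+ = \emptyset$. Now given $\gamma \in \Phi^+ \setminus \Phi_K^+$, expand $\gamma = \sum_\alpha c_\alpha \alpha$ with some $c_\alpha > 0$ for $\alpha \in \Delta \setminus K$, and split $w^{-1}(\gamma)$ accordingly: the $K$-part is a non-positive combination of simple roots in $K'$, while the $(\Delta \setminus K)$-part is a non-negative combination of positive roots lying in $\Phi^+ \setminus \Phi_{K'}^+$, which therefore contributes a strictly positive coefficient on some simple root in $\Delta \setminus K'$. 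Since the $K$-part contributes nothing outside $K'$, $w^{-1}(\gamma)$ retains a strictly positive coefficient on some simple root in $\Delta \setminus K'$, forcing $w^{-1}(\gamma) \in \Phi^+$; hence $\gamma \notin \inv(w^{-1})$. The main obstacle is the preparatory inclusion $w^{-1}(\Delta \setminus K) \subseteq \Phi^+ \setminus \Phi_{K'}^+$, whose role is to guarantee that the positive contributions to $w^{-1}(\gamma)$ on simple roots outside $K'$ cannot be canceled by the $K$-part; once this is secured the sign argument is essentially forced, and combining the two inclusions yields $\inv(w^{-1}) = \Phi_K^+$ and therefore $w = y_K$.
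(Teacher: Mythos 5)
Your proof is correct. Note that the paper itself does not prove this corollary: it simply attributes it to~\cite[Lemma 5.7]{Harada-Tymoczko2017}, so a self-contained argument like yours is genuinely additional content rather than a restatement. Your $(\Leftarrow)$ direction is exactly the routine check one would expect from Proposition~\ref{prop.affine.paving} and the Fact about $y_K$. For $(\Rightarrow)$ you set $K = \des(w^{-1})$ and prove $\inv(w^{-1}) = \Phi_K^+$, which forces $w^{-1} = y_K$ and hence $w = y_K$ since $y_K$ is an involution. The key structural observation is that $-w^{-1}$ is a linear isometry preserving $\Phi$ which sends $K$ into $\Delta$ (this is exactly what the admissibility hypothesis $w^{-1}(\Delta) \subseteq \Delta^- \sqcup \Phi^+$ buys), so it restricts to an isomorphism $\Phi_K \to \Phi_{K'}$ carrying positives to positives; this gives $w^{-1}(\Phi_K^+) = \Phi_{K'}^-$ and hence $\Phi_K^+ \subseteq \inv(w^{-1})$. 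The reverse inclusion $\inv(w^{-1}) \subseteq \Phi_K^+$ is handled by your coefficient-tracking argument: the preparatory inclusion $w^{-1}(\Delta\setminus K) \subseteq \Phi^+ \setminus \Phi_{K'}^+$ guarantees that the $(\Delta\setminus K)$-part of $w^{-1}(\gamma)$ contributes a strictly positive coefficient on a simple root outside $K'$, which cannot be cancelled by the $K$-part (supported entirely on $K'$), so $w^{-1}(\gamma) \in \Phi^+$. All steps check out; the one place you might want to state a hair more explicitly is that $\Phi_K = \Phi \cap \operatorname{span}(K)$, which is what guarantees that $\gamma \in \Phi^+ \setminus \Phi_K^+$ has a strictly positive coefficient on some simple root in $\Delta \setminus K$.
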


We aim to give a similar combinatorial description of the Hessenberg--Schubert cells for $\Hess(X_J)$.
Returning to the setting of an arbitrary subset $J$, the condition in~\eqref{eqn.nonempty} depends on the action of $w^{-1}$ on $J$, so it makes sense to consider the coset decomposition of $w$ given by $w=yv$ with $y\in W_J$ and $v\in {^J}W$.  We have
\begin{eqnarray}\label{eqn.nonempty2}
\nonumber \Ad(\dot w^{-1}) (N_J) \in H_\Delta &\Leftrightarrow& \Ad(\dot y^{-1})(N_J) \subseteq \Ad(\dot v)(H_\Delta)\\
  &\Leftrightarrow& \Ad(\dot y^{-1})(N_J) \subseteq \Ad(\dot v)(H_\Delta)\cap \fl_J.
\end{eqnarray}
A key observation is that $\Ad(\dot v)(H_\Delta)\cap \fl_J$ is a Hessenberg space in $\fl_J$~\cite[Prop.~5.2]{Precup2013}.   This is true even if we replace $H_\Delta$ with an arbitrary Hessenberg space of $\fg$.  However, in the case that $H=H_\Delta$ is a minimal indecomposable Hessenberg space, we can prove that $\Ad(\dot v)(H_\Delta)\cap \fl_J$ is the minimal indecomposable Hessenberg space for a Levi subalgebra of $\fl_J$.

\begin{lemma}\label{lemma.deltav} For all $v\in {^J}W$,  there exists $\Delta(v) \subseteq J$ such that 
\[
\Ad(\dot v)(H_\Delta)\cap \fl_J  = \fb_J \oplus \bigoplus_{\alpha\in \Delta(v)} \fg_{-\alpha}.
\]
In particular, $\Ad(\dot v)(H_\Delta)\cap \fl_J$ is equal to the minimal indecomposable Hessenberg space $H_{\Delta(v)}$ in the Levi subalgebra $\fl_{\Delta(v)}$ of $\fl_J$.  
\end{lemma}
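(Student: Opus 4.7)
The plan is to compute the root-space decomposition of $\Ad(\dot v)(H_\Delta)$ explicitly and identify which roots survive after intersecting with $\fl_J$, using the coset representative condition $v \in {^J}W$ in a crucial way.

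First, since $\Ad(\dot w)(E_\gamma) = E_{w(\gamma)}$, we have
\[
\Ad(\dot v)(H_\Delta) = \ft \oplus \bigoplus_{\gamma \in \Phi^+} \fg_{v(\gamma)} \oplus \bigoplus_{\alpha \in \Delta} \fg_{-v(\alpha)}.
\]
The roots appearing on the right are therefore $v(\Phi^+) \cup (-v(\Delta))$, and intersecting with $\fl_J = \ft \oplus \bigoplus_{\beta \in \Phi_J} \fg_\beta$ picks out the elements of $\Phi_J$ that lie in this set. Because $v \in {^J}W$ means $v^{-1} \in W^J$, we have $v^{-1}(\Phi_J^+) \subseteq \Phi^+$ and hence $v^{-1}(\Phi_J^-) \subseteq \Phi^-$. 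Applying this dichotomy shows: for each $\beta \in \Phi_J^+$, the root $\beta = v(v^{-1}(\beta))$ lies in $v(\Phi^+)$, so all of $\Phi_J^+$ contributes; and no root in $\Phi_J^-$ lies in $v(\Phi^+)$. Consequently, the positive-root contribution to the intersection is exactly $\fb_J$, and the remaining negative-root contribution can only come from the piece $\bigoplus_{\alpha \in \Delta} \fg_{-v(\alpha)}$.

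The heart of the argument — and the main obstacle — is to show that whenever $\alpha \in \Delta$ satisfies $v(\alpha) \in \Phi_J^+$, the root $v(\alpha)$ is actually a simple root of $\Phi_J$, i.e., $v(\alpha) \in J$. I would argue this by contradiction: if $v(\alpha) \in \Phi_J^+$ were not simple in $\Phi_J$, one could write $v(\alpha) = \beta_1 + \beta_2$ with $\beta_1, \beta_2 \in \Phi_J^+$, so
\[
\alpha = v^{-1}(\beta_1) + v^{-1}(\beta_2),
\]
where both summands are in $\Phi^+$ by $v^{-1}(\Phi_J^+) \subseteq \Phi^+$. This contradicts the simplicity of $\alpha$ in $\Phi$. (A subtlety worth double-checking here is that $\beta_1, \beta_2$ can be chosen to be positive roots of $\Phi_J$, not merely positive linear combinations of elements of $J$; this is standard for the root subsystem $\Phi_J$.)

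With the key claim in hand, define
\[
\Delta(v) := v(\Delta) \cap J = \{\, v(\alpha) \mid \alpha \in \Delta, \ v(\alpha) \in \Phi_J^+ \,\} \subseteq J.
\]
The analysis above shows that the negative roots of $\Phi_J$ appearing in $\Ad(\dot v)(H_\Delta)\cap \fl_J$ are precisely $\{-\beta : \beta \in \Delta(v)\}$, yielding
\[
\Ad(\dot v)(H_\Delta)\cap \fl_J = \fb_J \oplus \bigoplus_{\beta \in \Delta(v)} \fg_{-\beta}.
\]
The final sentence of the lemma — identifying this with $H_{\Delta(v)}$, the minimal indecomposable Hessenberg space of $\fl_{\Delta(v)} \subseteq \fl_J$ — follows by unwinding the definition in \eqref{eqn.indecomposable} applied to the Levi $\fl_{\Delta(v)}$, together with the observation that the positive root spaces of $\fb_J$ not already in $\fb_{\Delta(v)}$ can be absorbed into the natural enlargement of the minimal Hessenberg space.
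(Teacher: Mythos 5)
Your proof is correct and follows essentially the same route as the paper's: both compute the root spaces of $\Ad(\dot v)(H_\Delta)\cap\fl_J$, use $v\in{}^JW$ to show the positive part is exactly $\Phi_J^+$ and the negative part comes only from $v(\Delta^-)$, and then prove the key claim that $v(\Delta)\cap\Phi_J^+\subseteq J$ by decomposing a hypothetical non-simple root and applying $v^{-1}$ to get a contradiction with simplicity of $\alpha$. The only cosmetic difference is that you phrase the key contradiction in terms of positive roots ($v(\alpha)=\beta_1+\beta_2$ with $\beta_i\in\Phi_J^+$) while the paper phrases it with negative roots; the subtlety you flag — that a non-simple positive root of the subsystem $\Phi_J$ is a sum of two positive roots \emph{of $\Phi_J$} — is indeed the standard fact (subtract a simple root of $J$), and the paper implicitly uses the same fact.
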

\begin{proof} By definition, the roots $\gamma\in \Phi_J$ such that $\fg_\gamma\subseteq \Ad(\dot v)(H_\Delta)\cap \fl_J $ are 
\begin{eqnarray}\label{eqn.root.set}
(v(\Phi^+)\cap \Phi_J)\sqcup (v(\Delta^-)\cap \Phi_J).
\end{eqnarray}
Since $\Ad(\dot v)(H_\Delta)\cap \fl_J$ is a Hessenberg subspace of $\fl_J$, the set of positive roots in~\eqref{eqn.root.set} is equal to $\Phi_J^+$, and the set of negative roots in~\eqref{eqn.root.set} is the negation of some lower order ideal in $\Phi_J^+$. We aim to show that the negative roots in this set are in $J^-$. Note first that if $\beta\in v(\Phi^+)\cap \Phi_J^-$ then $v^{-1}(\beta) \in \Phi^+\cap v^{-1}(\Phi_J^-)$. As $v^{-1}(\Phi_J^-) \subseteq \Phi^-$ (since $v\in {^J}W$) we get that $v(\Phi^+)\cap \Phi_J^-=\emptyset$.  This proves $$v(\Phi^+)\cap \Phi_J = v(\Phi^+)\cap \Phi_J^+ = \Phi_J^+.$$
To complete the proof, we have only to show $v(\Delta^-)\cap \Phi_J \subseteq J^-$.   By similar reasoning as in the paragraph above, we get $v(\Delta^-)\cap \Phi_J = v(\Delta^-)\cap \Phi_J^-$. If $\beta\in v(\Delta^-)\cap \Phi_J^-$ is not a simple root, we can write $\beta = \alpha_1+\alpha_2$ for some $\alpha_1, \alpha_2\in \Phi_J^-$.  But this then implies $v^{-1}(\beta) = v^{-1}(\alpha_1)+v^{-1}(\alpha_2)$.  Since $v\in {^J}W$ we have $v^{-1}(\alpha_1), v^{-1}(\alpha_2)\in \Phi_J^-$, and this implies that $v^{-1}(\beta)$ is not simple, a contradiction.  Hence, to satisfy the conditions of the lemma, we can set $\Delta(v):= -(v(\Delta^-)\cap \Phi^-_J = v(\Delta)\cap \Phi_J^+$, and $\Delta(v)\subseteq J$.
\end{proof}

As in the proof of the lemma, for each $v\in {^J}W$, we define 
\begin{eqnarray}\label{eqn.Delv.def}
\Delta(v):= v(\Delta)\cap \Phi_J^+.
\end{eqnarray}  
Rewriting the condition of~\eqref{eqn.nonempty2} and applying Corollary~\ref{cor.Peterson.cells} yields the following description of $J$-admissible elements.

\begin{corollary}\label{cor.cells} Let $X_J$ be a regular element defined as in~\eqref{eqn.reg.element} above. For all $w\in W$, 
\begin{eqnarray*}
C_w\cap \Hess(X_J) \neq \emptyset \Leftrightarrow  w=y_K v \ \   \textup{ for some $v\in {^J}W$ and $K\subseteq \Delta(v)$.}
\end{eqnarray*}
In other words, $w$ is $J$-admissible if and only if $w$ has a reduced decomposition $w =y_K v$ with $v\in {^J}W$ and $y_K$ the longest element of some reflection subgroup of $W_{\Delta(v)}$ generated by a subset of simple reflections.
\end{corollary}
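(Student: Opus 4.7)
The plan is to combine Proposition~\ref{prop.affine.paving}, equation~\eqref{eqn.nonempty2}, and Lemma~\ref{lemma.deltav} to reduce the nonemptiness question to a Peterson-type membership problem inside the Levi $\fl_J$, then apply Corollary~\ref{cor.Peterson.cells} and finish with a short root-theoretic calculation.

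First, using Lemma~\ref{lemma.coset} I would write $w = yv$ as a reduced factorization with $y \in W_J$ and $v \in {^J}W$. By Proposition~\ref{prop.affine.paving} and equation~\eqref{eqn.nonempty2}, the cell $C_w \cap \Hess(X_J)$ is nonempty if and only if $\Ad(\dot y^{-1})(N_J)$ lies in $\Ad(\dot v)(H_\Delta) \cap \fl_J$, which by Lemma~\ref{lemma.deltav} equals $\fb_J \oplus \bigoplus_{\alpha \in \Delta(v)} \fg_{-\alpha}$.

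Next, since this space is contained in the minimal indecomposable Hessenberg space $\fb_J \oplus \bigoplus_{\alpha \in J} \fg_{-\alpha}$ of $\fl_J$, and $N_J$ is a regular nilpotent element of $\fl_J$, the membership condition forces $\dot y B_J$ to be a Weyl flag in the Peterson variety inside the flag variety $\cb_J = L_J/B_J$. Applying Corollary~\ref{cor.Peterson.cells} to the reductive group $L_J$ (whose Weyl group is $W_J$ with simple roots $J$), I conclude that $y = y_K$ for some $K \subseteq J$.

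The main remaining step is to refine the constraint to $K \subseteq \Delta(v)$ by an explicit root calculation. Using that $y_K$ is an involution and that $\Ad(\dot y_K^{-1})(E_\gamma) = E_{y_K(\gamma)}$, one computes
\[
\Ad(\dot y_K^{-1})(N_J) = \sum_{\alpha \in J \setminus K} E_{y_K(\alpha)} + \sum_{\alpha \in K} E_{y_K(\alpha)}.
\]
Since $\inv(y_K) = \Phi_K^+$ and $y_K \in W_J$, for each $\alpha \in J \setminus K$ we have $y_K(\alpha) \in \Phi_J^+$, so the first sum lies in $\fb_J$. The longest element $y_K$ of $W_K$ sends the simple roots of $K$ bijectively onto $-K$, so the second sum exhausts $\bigoplus_{\beta \in K} \fg_{-\beta}$, hitting each summand exactly once. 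Thus the total expression belongs to $\fb_J \oplus \bigoplus_{\alpha \in \Delta(v)} \fg_{-\alpha}$ exactly when $K \subseteq \Delta(v)$. The converse is obtained by reading the same computation in reverse, and the product $y_K v$ with $y_K \in W_K \subseteq W_{\Delta(v)} \subseteq W_J$ and $v \in {^J}W$ is automatically the reduced factorization guaranteed by Lemma~\ref{lemma.coset}. The only step that requires genuine work beyond packaging the prior results is the root-theoretic bookkeeping above; this is the main (but mild) obstacle.
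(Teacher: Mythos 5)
Your proof is correct and follows essentially the same route as the paper: reduce via Proposition~\ref{prop.affine.paving}, equation~\eqref{eqn.nonempty2}, and Lemma~\ref{lemma.deltav} to the condition $\Ad(\dot y^{-1})(N_J)\in H_{\Delta(v)}$, and then invoke Corollary~\ref{cor.Peterson.cells}. The one small difference is organizational: you first apply Corollary~\ref{cor.Peterson.cells} to the full Levi $L_J$ to conclude $y=y_K$ with $K\subseteq J$, and then refine to $K\subseteq\Delta(v)$ by the explicit computation $\Ad(\dot y_K^{-1})(N_J)=\sum_{\alpha\in J\setminus K}E_{y_K(\alpha)}+\sum_{\alpha\in K}E_{-\alpha}$, whereas the paper applies Corollary~\ref{cor.Peterson.cells} directly to $L_{\Delta(v)}$ in one step. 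Your two-step version spells out a detail the paper leaves implicit (namely, that the terms of $N_J$ coming from $J\setminus\Delta(v)$ land in $\fb_J$ after applying $\Ad(\dot y_K^{-1})$ exactly when $K\subseteq\Delta(v)$), so it is slightly more verbose but also slightly more transparent.
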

\begin{proof} By Proposition~\ref{prop.affine.paving}, $C_w\cap \Hess(X_J)\neq \emptyset$ if and only if $\Ad(\dot w^{-1})(N_J)\in H$ which by~\eqref{eqn.nonempty2} and Lemma~\ref{lemma.deltav} is equivalent to $\Ad(\dot y^{-1})(N_J) \in H_{\Delta(v)}$.  Since $N_J$ is a regular nilpotent element of $\fl_J$, its projection to $\fg_{\Delta(v)} \subseteq \fl_J$ is a regular nilpotent element of $\fg_{\Delta(v)}$, and $\Ad(y^{-1})(N_J) \in H_{\Delta(v)}$ if and only if $C_y\cap \Pet_{\Delta(v)} \neq \emptyset$.  The required statement now follows directly from Corollary~\ref{cor.Peterson.cells}.
\end{proof}

\begin{example}[Type $\mathrm{B}_4$]\label{ex.typeB4.1} Let $G=O_{9}(\C)$ be the orthogonal group of all $9\times 9$ complex orthogonal matrices with Lie algebra $\fo_9(\C)$ of type B, with associated root system of type $\mathrm{B}_4$.  Let $\Delta=\{\alpha_1, \alpha_2, \alpha_3, \alpha_4\}$ be the set of simple roots in the associated root system $\Phi$.  Following the conventions of~\cite{Humphreys-LieAlg}, we take $\alpha_4$ to be the short root in $\Delta$.  Let $J=\{ \alpha_1, \alpha_2, \alpha_4 \}$ so $\Phi_J = \{ \alpha_1, \alpha_2, \alpha_1+\alpha_2, \alpha_4 \}$.  
 The table below shows the subset $\Delta(v)$ for a selection of elements $v\in {^J}W$.
 \[
\begin{array}{c||c|c|c|c|c|c|c}
v & e & s_3 & s_3s_4 & s_3s_2 & s_3s_4s_3 &s_3s_2s_1 & v_0 \\ \hline
\Delta(v) &  J & \{\alpha_1\} &  \{\alpha_1\} & \{\alpha_2\}   & \{\alpha_1, \alpha_4\} & \{\alpha_1, \alpha_2\}  & J
\end{array}
\]
Here $v_0$ denotes the longest element of ${^JW}$ (there are 32 elements total, one for each coset of $W_J$ in $W$).
\end{example}

Now suppose $G=GL_n(\C)$ and $W=S_n$.  Given a subset $J_\mu$ associated to the composition $\mu$ of $n$, we can compute the set $\Delta(v)$ easily using the one-line notation for~$v\in {^\mu}W$.  Indeed, given such a permutation $v$ and pair $(i,i+1)$ from the same $\mu$-block, the simple root $\alpha_i=\epsilon_i - \epsilon_{i+1}$ is an element of $\Delta(v)$ if and only if $i$ and $i+1$ appear in consecutive entries of the one-line notation for $v$.

\begin{example}\label{ex.8.(4,3,1).2} Let $n=8$ and $\mu = (4,3,1)$.  Recall from Example~\ref{ex.8.(4,3,1).1} that $v=15862374\in {^{(4,3,1)}}W$.  Reading from left to right, we see that the pair $(2,3)$ appears consecutively in the one-line notation for $v$ but $1$ is separated from $2$ and $3$ is separated from $4$:
\[
v=\mathbf{1}568\mathbf{2}\mathbf{3}7\mathbf{4}.
\] 
This tells us $\alpha_2\in \Delta(v)$ and $\alpha_1,\alpha_3\notin \Delta(v)$.  Applying the same analysis to the pairs $(5,6)$ and $(6,7)$, we get that $\Delta(v) = \{\alpha_2,\alpha_5\}$.
\end{example}

\begin{example}[Type $\textup{A}_3$] \label{ex.A3.1}
Let $G=GL_4(\C)$ and $\mu=(2,2)$ with blocks $\{1,2\}$ and $\{3,4\}$. We have $J_{(2,2)} = \{\alpha_1, \alpha_3\}$.  The table below computes the subset $\Delta(v)$ for each $v\in {^{(2,2)}}W$.
\[
\begin{array}{c||c|c|c|c|c|c}
v \in {^{(2,2)}}W & 1234 & 1324 & 3124 & 1342 & 3142 & v_0=3412 \\ \hline
\textup{reduced word} & e & s_2 & s_2s_1 & s_2s_3& s_2s_1s_3 & s_2s_1s_3s_2 \\ \hline
\Delta(v) &  J_{(2,2)} & \emptyset & \{\alpha_1\} & \{\alpha_3\}  & \emptyset & J_{(2,2)}
\end{array}
\]
The reader can confirm these results easily by looking at the positions of pairs $(1,2)$ and $(3,4)$ in the one-line notation of $v$.
\end{example}

The set of $J$-admissible elements in $W$ is particularly well-behaved.  We record one more short lemma that computes the descent set for each such element. 

\begin{lemma}\label{lemma.inv.decomp} Let $v\in {^J}W$ and $K\subseteq \Delta(v)$.  Then $v^{-1}(K) \subseteq \Delta$.  In particular, if $w=y_K v$ then 
\[ \label{eqn.inversions}
\des(w) = \des(v)\sqcup v^{-1}(K).
\]
\end{lemma}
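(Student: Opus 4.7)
The first assertion is immediate from the definition~\eqref{eqn.Delv.def}: since $\Delta(v) = v(\Delta) \cap \Phi_J^+$, every $\alpha \in K \subseteq \Delta(v)$ has the form $v(\beta)$ for some $\beta \in \Delta$, and hence $v^{-1}(K) \subseteq \Delta$.

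For the descent decomposition, the plan is to apply the inversion formula for reduced products. By Lemma~\ref{lemma.deltav}, $K \subseteq \Delta(v) \subseteq J$, so $y_K \in W_K \subseteq W_J$; combined with $v \in {}^J W$ and Lemma~\ref{lemma.coset}, the product $w = y_K v$ is reduced. Using the identity $\inv(w) = \inv(v) \sqcup v^{-1}(\inv(y_K))$ together with $\inv(y_K) = \Phi_K^+$ (because $y_K$ is the longest element of $W_K$) then yields
\[
\inv(w) = \inv(v) \sqcup v^{-1}(\Phi_K^+),
\]
and intersecting with $\Delta$ reduces the problem to identifying $v^{-1}(\Phi_K^+) \cap \Delta$ with $v^{-1}(K)$ and checking that this set is disjoint from $\des(v)$.

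The containment $v^{-1}(K) \subseteq v^{-1}(\Phi_K^+) \cap \Delta$ is just the first assertion of the lemma. For the reverse containment, I would take $\gamma \in \Phi_K^+$ with $v^{-1}(\gamma) \in \Delta$, expand $\gamma = \sum_{\alpha \in K} c_\alpha \alpha$ with nonnegative integer coefficients, and apply $v^{-1}$ to obtain $v^{-1}(\gamma) = \sum_{\alpha \in K} c_\alpha v^{-1}(\alpha)$. Since $v \in {}^J W$ and $K \subseteq J$, each $v^{-1}(\alpha)$ is a positive root, so $v^{-1}(\gamma)$ is a nonnegative integer combination of positive roots; because a simple root cannot be written as a sum of two or more positive roots, comparing heights forces $\sum_\alpha c_\alpha = 1$ and hence $\gamma \in K$. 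This height comparison is the main (mild) obstacle, and it is precisely where the hypothesis $v \in {}^J W$ is used essentially. Disjointness is then a short observation: $v$ sends $v^{-1}(K) \subseteq \Phi^+$ into $K \subseteq \Phi^+$, so $v^{-1}(K)$ contains no inversions of $v$, completing the proof.
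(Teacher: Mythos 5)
Your proof is correct and follows essentially the same route as the paper's. The paper derives the first assertion directly from the definition of $\Delta(v)$, then invokes the reduced-product inversion formula and intersects with $\Delta$, declaring the resulting identity to "follow immediately"; you make explicit the one step the paper leaves implicit, namely that $v^{-1}(\Phi_K^+)\cap\Delta = v^{-1}(K)$, via the height argument showing a simple root cannot be a sum of two or more positive roots.
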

\begin{proof} Recall $\Delta(v) = v(\Delta) \cap \Phi_J^+$.  Thus $K\subseteq \Delta(v)$ implies $v^{-1}(K) \subseteq \Delta$.  It follows immediately that
\[
\des(w) = \inv(w)\cap \Delta = (\inv(v)\sqcup v^{-1}(K))\cap \Delta = \des(v)\sqcup v^{-1}(K)
\]
as desired.  
\end{proof}

%%%%%%%%%%%%%%%%%%%%%%%%%%%%%%%

\section{Hessenberg--Schubert varieties in $\Hess(X_J)$}\label{sec.Hessenberg-Schubert}

This section proves our first main result, Theorem~\ref{thm.cell.closure} below, which identifies each Hessenberg--Schubert variety $\overline{C_w\cap \Hess(X_J)}$ with a regular Hessenberg variety in the flag  variety of a standard Levi subgroup of $G$.

We consider a coset decomposition of $w\in W$ induced by its descent set, specifically 
\begin{eqnarray}\label{eqn.right.des.decomp}
w = \tau_w y_{\des(w)} \textup{ for } \tau_w \in W^{\des(w)} \textup{ such that } \ell(w) = \ell(\tau_w) + \ell(y_{\des(w)}).
\end{eqnarray}
As usual $y_{\des(w)}$ denotes the longest element of the subgroup 
\[
W_{\des(w)} = \left< s_\alpha \mid \alpha\in \des(w) \right>.
\]

\begin{remark}\label{rem.decomp.H} There is an analogue of the decomposition~\eqref{eqn.right.des.decomp} defined for any Hessenberg space $H$ as follows. Given a Weyl group element $w\in W$, consider the set of \emph{Hessenberg inversions} $\inv_H(w):= \inv(w)\cap \Phi_H$.  When $H=H_{\Delta}$ this set is equal to the set of right descents of $w$. Sommers and Tymoczko~\cite{Sommers-Tymoczko} proved that there is a unique Weyl group element $y$ such that $\inv_H(w) = \inv_H(y)$ and $y^{-1}(\Delta)\subseteq \Phi_H$.  They showed that this element $y$ also satisfies $\inv(y) \subseteq \inv(w)$.  The latter property implies the existence of a reduced decomposition $w=\tau_w y$ for some $\tau_w\in W$. When $H=H_\Delta$ this decomposition is precisely that of~\eqref{eqn.right.des.decomp} above. The collection of all Weyl group elements with Hessenberg inversion set equal to $\inv_H(y)$ is a weak left Bruhat interval with minimal element $y$ (see~\cite[Lemma 2.11]{Harada-Precup}). 
\end{remark}

The set $\des(w)$ of right descents of $w$ determines a standard Levi subgroup of $G$, denoted as in Section~\ref{sec.alg.gps} by $L_{\des(w)}$. To streamline notation and emphasize the role of $w$, we write $L_w:= L_{\des(w)}$ throughout this section. Similarly, set $\fl_w:= \fl_{\des(w)} = \Lie(L_w)$.  Let $\Phi_w:= \Phi_{\des(w)}$ and $\Phi_w^{\pm} = \Phi_w\cap \Phi^{\pm}$.  Notice that, by definition, $\des(w) = \Phi_w^+\cap \Delta$ is the set of simple roots in $\Phi_w$ and $W_{\des(w)}$ is the Weyl group of $L_w$.

\begin{lemma}\label{lemma.tau-properties2} Suppose $w\in W$ is $J$-admissible, and let $w=\tau_w y_{\des(w)}$ be the decomposition of~\eqref{eqn.right.des.decomp}.  Then $\tau_w\in {^JW}$.
\end{lemma}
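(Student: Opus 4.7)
The plan is to use the standard characterization that $\tau_w \in {^J}W$ if and only if $\tau_w^{-1}(\alpha) \in \Phi^+$ for every $\alpha \in J$, which reduces the claim to a purely root-theoretic calculation. Since the decomposition $w = \tau_w y_{\des(w)}$ is reduced and $y_{\des(w)}$ is an involution, I would rewrite $\tau_w = w y_{\des(w)} = y_K v y_{\des(w)}$ using Corollary~\ref{cor.cells}, giving
\[
\tau_w^{-1}(\alpha) = y_{\des(w)}\, v^{-1}\, y_K(\alpha).
\]
Throughout I rely on Lemma~\ref{lemma.inv.decomp} to identify $\des(w) = \des(v) \sqcup v^{-1}(K)$ with $v^{-1}(K) \subseteq \Delta$, so that $\Phi_{\des(w)}$ is a standard parabolic subsystem containing $\Phi_{v^{-1}(K)}$ as a sub-parabolic.

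I would then split into two cases according to whether $\alpha \in K$ or $\alpha \in J \setminus K$. The first case is a straightforward chase through the three factors: $y_K$ sends $\alpha$ into $\Phi_K^-$; the restriction of $v^{-1}$ to $\Phi_K$ is a sign-preserving bijection onto $\Phi_{v^{-1}(K)}$ (using $K \subseteq \Delta(v)$, hence $v^{-1}(K) \subseteq \Delta$), so $v^{-1}y_K(\alpha) \in \Phi_{v^{-1}(K)}^- \subseteq \Phi_{\des(w)}^-$; and $y_{\des(w)}$ sends $\Phi_{\des(w)}^-$ to $\Phi_{\des(w)}^+ \subseteq \Phi^+$.

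The second case, $\alpha \in J \setminus K$, is where the main obstacle lies. Since $y_K$ permutes $\Phi^+ \setminus \Phi_K^+$ and preserves $\Phi_J$ (as $y_K \in W_K \subseteq W_J$), we have $y_K(\alpha) \in \Phi_J^+ \setminus \Phi_K^+$, and then $v \in {^J}W$ gives $v^{-1} y_K(\alpha) \in \Phi^+$. The crux is to establish additionally that $v^{-1} y_K(\alpha) \notin \Phi_{\des(w)}^+$, since $y_{\des(w)}$ permutes $\Phi^+ \setminus \Phi_{\des(w)}^+$ and thereby preserves positivity on that complement.

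To prove this non-containment, I expect to use a coefficient-comparison argument: writing $y_K(\alpha) = \alpha + \sum_{\delta \in K} a_\delta \delta$ with $a_\delta \in \Z_{\geq 0}$, and assuming for contradiction that $v^{-1} y_K(\alpha) \in \Phi_{\des(w)}^+$, one applies $v$ to express $y_K(\alpha)$ as a nonnegative integer combination of roots in $v(\des(v)) \cup K$. Here $v(\des(v)) \subseteq \Phi^-$ contributes only nonpositive coefficients on each simple root, while $K \subseteq \Delta \setminus \{\alpha\}$ contributes zero coefficient on $\alpha$, whereas the left side has coefficient $1$ on $\alpha$---a contradiction. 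Together the two cases yield $\tau_w^{-1}(J) \subseteq \Phi^+$, hence $\tau_w \in {^J}W$.
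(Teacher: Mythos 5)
Your proof is correct, but it takes a genuinely different route from the paper's.  The paper avoids your case analysis entirely: rewriting $w=y_Kv$ as $v=\tau_w\bigl(y_{\des(w)}y_{v^{-1}(K)}\bigr)$ (using the identity $y_Kv=vy_{v^{-1}(K)}$, which holds because $v^{-1}(K)\subseteq\Delta$), noting that $\tau_w\in W^{\des(w)}$ and $y_{\des(w)}y_{v^{-1}(K)}\in W_{\des(w)}$ so the factorization is reduced, and then concluding $\inv(\tau_w^{-1})\subseteq\inv(v^{-1})$ from the length-additivity lemma.  Since $v\in{^J}W$ forces $J\cap\inv(v^{-1})=\emptyset$, the containment $J\cap\inv(\tau_w^{-1})=\emptyset$ follows in one stroke.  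Your argument instead unpacks $\tau_w^{-1}(\alpha)=y_{\des(w)}v^{-1}y_K(\alpha)$ factor by factor, splitting on $\alpha\in K$ versus $\alpha\in J\setminus K$, and closing the second case with a simple-root coefficient comparison.  I checked the crux of that second case and it does go through: if $v^{-1}y_K(\alpha)\in\Phi_{\des(w)}^+$ then applying $v$ expresses $y_K(\alpha)$ as a nonnegative combination of $v(\des(v))\subseteq\Phi^-$ and $K$, so the coefficient of $\alpha$ is nonpositive, contradicting that $y_K(\alpha)=\alpha+(\text{nonneg.\ combination of }K)$ has $\alpha$-coefficient $1$.  Both proofs are valid; the paper's inversion-set argument is shorter and avoids coordinates, whereas yours makes the positivity mechanism explicit at each stage, which some readers may find more transparent.
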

\begin{proof} Since $w$ is $J$-admissible, we may write $w=y_K v$ for some $v\in ^JW$ and $K\subseteq \Delta(v)$ by Corollary~\ref{cor.cells}. Now
\[
\tau_w y_{\des(w)} = w= y_Kv \Rightarrow \tau_w y_{\des(w)} = v y_{v^{-1}(K)} \Rightarrow \tau_w y_{\des(w)} y_{v^{-1}(K)} = v.
\]
Since $v^{-1}(K) \subseteq \des(w)$ by Lemma~\ref{lemma.inv.decomp}, it follows immediately that $$y'= y_{\des(w)} y_{v^{-1}(K)} \in W_{\des(w)}$$ and $v=\tau_wy'$ is a reduced product since $\tau_w\in W^{\des(w)}$. Thus $\inv(\tau_w^{-1}) \subseteq \inv(v^{-1})$. Now $v\in {^J}W$ implies $J\cap \inv(v^{-1})=\emptyset$ and therefore $J\cap \inv(\tau_w^{-1})=\emptyset$, proving $\tau_w\in {^J}W$, as desired.
\end{proof}

\begin{lemma}\label{lemma.tau-properties1}
Suppose $w\in W$ is $J$-admissible, and let $w=\tau_wy_{\des(w)}$ be the decomposition of~\eqref{eqn.right.des.decomp}.  Then $\tau_w^{-1}(J)\cap \Phi_{w}\subseteq \des(w)$.
\end{lemma}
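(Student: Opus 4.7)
My plan is to fix $\alpha \in J$ with $\gamma := \tau_w^{-1}(\alpha) \in \Phi_w$ and argue that $\gamma$ must actually lie in $\des(w) = \Phi_w^+ \cap \Delta$. The argument will combine the $J$-admissibility of $w$ (via Proposition~\ref{prop.affine.paving}) with the classical fact that the longest element of a parabolic Weyl group sends its simple roots to the negatives of simple roots through a Dynkin diagram automorphism.

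First I would apply Lemma~\ref{lemma.tau-properties2} to conclude $\tau_w \in {^J}W$. Since $\alpha \in J \subseteq \Phi_J^+$, this immediately yields $\gamma \in \Phi^+ \cap \Phi_w = \Phi_w^+$. Next, using the reduced decomposition $w = \tau_w y_{\des(w)}$ and the involutivity of $y_{\des(w)}$, I compute
\[
w^{-1}(\alpha) \;=\; y_{\des(w)}\,\tau_w^{-1}(\alpha) \;=\; y_{\des(w)}(\gamma).
\]
Because $y_{\des(w)}$ is the longest element of $W_{\des(w)}$, it sends $\Phi_w^+$ to $\Phi_w^-$, so $w^{-1}(\alpha) \in \Phi_w^-$. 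On the other hand, Proposition~\ref{prop.affine.paving} gives $w^{-1}(\alpha) \in \Delta^- \sqcup \Phi^+$. Since $\Phi_w^- \cap \Phi^+ = \emptyset$, I conclude
\[
w^{-1}(\alpha) \;\in\; \Phi_w^- \cap \Delta^- \;=\; -\des(w).
\]

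Writing $w^{-1}(\alpha) = -\delta$ for some $\delta \in \des(w)$, this rearranges to $\gamma = -y_{\des(w)}(\delta)$. The final step is to invoke the standard fact that the map $\delta \mapsto -y_{\des(w)}(\delta)$ is a permutation of $\des(w)$ (the opposition involution of the Levi factor), so $\gamma \in \des(w)$, which is exactly what is required.

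I don't foresee a substantial obstacle: the argument is really a bookkeeping exercise that exploits the interplay between the two decompositions of $w$. The only subtle point will be combining the two containments for $w^{-1}(\alpha)$ --- one from $J$-admissibility and one from the descent structure --- cleanly enough that the opposition-involution fact finishes the job in a single line.
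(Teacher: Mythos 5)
Your proof is correct and follows essentially the same route as the paper: use Lemma~\ref{lemma.tau-properties2} to get $\tau_w^{-1}(\alpha)\in\Phi_w^+$, push through $y_{\des(w)}$ to land $w^{-1}(\alpha)$ in $\Phi_w^-$, intersect with the $J$-admissibility condition $w^{-1}(J)\subseteq\Phi^+\sqcup\Delta^-$ to get $w^{-1}(\alpha)\in\Phi_w^-\cap\Delta^-$, and finish with the observation that $y_{\des(w)}$ carries $\Phi_w^-\cap\Delta^-$ back onto $\des(w)$. The paper phrases the last step as $y_{\des(w)}(\Phi_w^-\cap\Delta^-)=\des(w)$ rather than invoking the opposition involution by name, but the content is identical.
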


\begin{proof}
Let $\alpha\in J$ be such that $\tau_w^{-1}(\alpha)\in\Phi_{w}$.  Since $y_{\des(w)}$ is the longest element of $W_{\des(w)}$, $y_{\des(w)}$ sends every positive root in $\Phi^+_{w}$ to a negative root. We also have $\tau_w^{-1}(\alpha)\in \Phi^+$ by Lemma~\ref{lemma.tau-properties2}.  Combining previous observations, we have $w^{-1}(\alpha)\in \Phi_w^-$.
Since $w$ is $J$-admissible, $w^{-1}(J) \subseteq \Phi^+ \sqcup \Delta^-$.  Thus $w^{-1}(\alpha)\in\Phi_w^- \cap \Delta^-$, so $\tau_w^{-1}(\alpha)=y_{\des(w)}w^{-1}(\alpha)\in \des(w)$, where the last assertion follows from the fact that $y_{\des(w)}(\Phi_w^-\cap \Delta^-) = \Phi_w^+\cap \Delta = \des(w)$.
\end{proof}

Let $\pi_w: \fg \rightarrow \fl_w$ denote the projection from the Lie algebra $\fg$ to the subalgebra $\fl_w$ sending the root vector $E_\gamma$ to $0$ if $\gamma\not\in\Phi_{w}$.
Define 
\begin{eqnarray}\label{eqn.XJ.projection}
N_{J,w}:= \pi_w(\Ad(\dot{\tau}_w^{-1})(N_J)),\;\; S_{J,w}:=\pi_w(\Ad(\dot\tau_w^{-1})( S_J)),
\end{eqnarray}
and  
\[
X_{J,w}:=N_{J,w}+S_{J,w}.
\]
Setting $J_w:= \tau_w^{-1}(J)\cap \Phi_{w}$ we get that $J_w\subseteq \des(w)$ by Lemma~\ref{lemma.tau-properties1}, and so
\[
N_{J,w} = \pi_w \left( \sum_{\alpha\in J} E_{\tau_w^{-1}(\alpha)} \right) = \sum_{\alpha\in J_w} E_\alpha \in \fl_w.
\]
We obtain the following.

\begin{lemma}\label{lemma.regular.proj}
The element $X_{J,w}$ is a regular element of $\fl_w$.  In particular, $X_{J,w}$ is a regular element of the form~\eqref{eqn.reg.element} in $\fl_w$ with respect to the subset $J_w:= \tau_w^{-1}(J)\cap \Phi_{w}$ of simple roots in $\des(w)$.
\end{lemma}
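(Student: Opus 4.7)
The strategy is to verify that $X_{J,w} = S_{J,w} + N_{J,w}$ fits the shape of~\eqref{eqn.reg.element} inside the Levi subalgebra $\fl_w$ with respect to $J_w$; once this is done, regularity of $X_{J,w}$ in $\fl_w$ follows immediately from the discussion containing~\eqref{eqn.reg.element}. The nilpotent piece $N_{J,w} = \sum_{\alpha \in J_w} E_\alpha$ is already in the right form, having been computed in the paragraph preceding the lemma. For the semisimple piece, the first observation is that $\ft \subseteq \fl_w$, so $\pi_w$ restricts to the identity on $\ft$; hence $S_{J,w} = \Ad(\dot\tau_w^{-1})(S_J) = \tau_w^{-1}\cdot S_J$ already lies in $\ft$, and this element will play the role of $S_{J_w}$ in~\eqref{eqn.reg.element} applied to $\fl_w$.

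It then remains to verify the vanishing condition for the semisimple part: for each simple root $\alpha$ of $\fl_w$, i.e., each $\alpha \in \des(w)$, we need $\alpha(S_{J,w}) = 0$ if and only if $\alpha \in J_w$. Rewriting $\alpha(S_{J,w}) = (\tau_w(\alpha))(S_J)$ and using $\fl_J = \fz_\fg(S_J)$, which gives $\beta(S_J) = 0 \Leftrightarrow \beta \in \Phi_J$ for every root $\beta$, the task reduces to showing that $\tau_w(\alpha) \in \Phi_J \Leftrightarrow \tau_w(\alpha) \in J$ for $\alpha \in \des(w)$. One direction is trivial, since $J \subseteq \Phi_J$.

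The main obstacle is the converse: starting from $\tau_w(\alpha) \in \Phi_J$ for a simple root $\alpha \in \des(w)$, one must deduce $\tau_w(\alpha) \in J$. Here the plan is to apply Lemma~\ref{lemma.tau-properties2}, which gives $\tau_w \in {^J}W$ and hence $\tau_w^{-1}(\Phi_J^+) \subseteq \Phi^+$ (and correspondingly $\tau_w^{-1}(\Phi_J^-) \subseteq \Phi^-$). The possibility $\tau_w(\alpha) \in \Phi_J^-$ is ruled out because it would force $\alpha \in \Phi^-$, contradicting $\alpha \in \Delta$; hence $\tau_w(\alpha) \in \Phi_J^+$. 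If $\tau_w(\alpha)$ were not simple in $\Phi_J$, one could decompose $\tau_w(\alpha) = \beta_1 + \beta_2$ with $\beta_1, \beta_2 \in \Phi_J^+$, pulling back to $\alpha = \tau_w^{-1}(\beta_1) + \tau_w^{-1}(\beta_2)$, a sum of two positive roots, contradicting the simplicity of $\alpha$. Thus $\tau_w(\alpha) \in J$ as needed, and we conclude that $X_{J,w}$ is a regular element of $\fl_w$ of the form~\eqref{eqn.reg.element} with respect to the subset $J_w \subseteq \des(w)$.
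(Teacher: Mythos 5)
Your proof is correct and follows essentially the same route as the paper: reduce to the semisimple part, compute $\alpha(S_{J,w}) = \tau_w(\alpha)(S_J)$, and use $\tau_w \in {^J}W$ (Lemma~\ref{lemma.tau-properties2}) to show that $\tau_w(\alpha) \in \Phi_J$ forces $\tau_w(\alpha) \in J$ for $\alpha \in \des(w)$. The only substantive difference is that you spell out the two ways $\tau_w \in {^J}W$ is used (ruling out $\Phi_J^-$ and ruling out non-simple elements of $\Phi_J^+$), whereas the paper compresses this into one sentence; both are fine.
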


\begin{proof}
To prove the statement we need only show that, for all $\alpha\in \des(w)$, $\alpha(S_{J,w})=0$ if and only if $\alpha\in J_w$. 

Given $\alpha\in \des(w)$, we have $\alpha(S_{J,w})=\alpha(\Ad(\dot\tau_w^{-1})(S_J))=\tau_w(\alpha) (S_J)$.  As $S_J$ is the semisimple part of the element $X_J$ of $\fg$ defined as in~\eqref{eqn.reg.element}, we get that $\tau_w(\alpha)(S_J)= 0$ if and only if $\tau_w(\alpha)\in \Phi_J^+$.  Since $\tau_w\in {^J}W$ by Lemma~\ref{lemma.tau-properties2}, the last condition is equivalent to $\tau_w(\alpha)\in J$. Indeed, that $\tau_w^{-1}(\Phi_J^+)\subseteq \Phi^+$ implies that $\tau_w^{-1}$ cannot map any positive non-simple root  in $\Phi_J^+$ to a simple root. Finally, $\tau_w(\alpha)\in J$ if and only if $\alpha \in J_w$.  The proof is now complete.
\end{proof}

\begin{example}[Type $\textup{B}_4$]\label{ex.typeB4.2} Let $G=O_9(\C)$, $\fg=\fo_9(\C)$, and $J=\{\alpha_1, \alpha_2, \alpha_4\}$ as in Example~\ref{ex.typeB4.1} of the previous section. Consider the $J$-admissible elements $s_1s_3s_4$ and $s_1s_2s_1s_3s_2s_1 $.  We record the data of the decompositions from Corollary~\ref{cor.cells} and~\eqref{eqn.right.des.decomp} and the subset of simple roots $J_w\subseteq \des(w)$ that determines the regular element $X_{J,w}$ in the following table.
\[
\begin{array} {c|c|c|c|c|c|c}
w & y_K &v & \des(w) & y_{\des(w)} & \tau_w &  J_w \\ \hline
s_1s_3s_4 & s_1 & s_3s_4 & \{\alpha_1, \alpha_4\} & s_1s_4 & s_3 &   \{\alpha_1\} \\
s_1s_2s_1s_3s_2s_1 & s_1s_2s_1 & s_3s_2s_1 &  \{\alpha_1, \alpha_2, \alpha_3\} & w & e & \{\alpha_1, \alpha_2\}
\end{array}
\]
The main theorem of this section uses the decomposition $w=y_{\des(w)}\tau_w$ to compute the Hessenberg--Schubert variety $\overline{C_w\cap \Hess(X_J)}$; this computation is carried out for the particular $w$ in the table above in Example~\ref{ex.typeB4.3} below.
\end{example}

When $w$ is fixed and there is no possible confusion, we write $L_{w}=L$.  Let $\cb_L$ denote the flag variety of $L$, and $\iota_L: \cb_L \to \cb$ the natural inclusion of $\cb_L$ into the flag variety defined by $z B_L \mapsto zB$, where $B_L:= B\cap L$.  Using this inclusion, we identify any subvariety of $\cb_L$ with its image in $\cb$.  In particular, any Hessenberg variety $\Hess_L(X, H)\subseteq\cb_L$ is automatically identified with its image in $\cb$.  We let  $\Hess_L(X_{J,w})$ denote the regular Hessenberg variety in $\cb_L$ corresponding to the minimal indecomposable Hessenberg space $H_{\des(w)}= H_\Delta \cap \fl_w$ in $\fl_w$.

\begin{theorem}\label{thm.cell.closure} Suppose $w\in W$ is $J$-admissible and $w=\tau_wy_{\des(w)}$ is the decomposition from~\eqref{eqn.right.des.decomp}.  Then 
\begin{eqnarray}\label{eqn.shifted-cell}
 \dot\tau_w (C_{y_{\des(w)}} \cap \Hess_L(X_{J,w}))  \subseteq C_w\cap \Hess(X_J).
\end{eqnarray}
In particular, 
\[
\overline{C_w \cap \Hess(X_J)}  \simeq \Hess_L(X_{J,w}).
\]
\end{theorem}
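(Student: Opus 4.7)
The plan is to define $\phi\colon \cb_L \to \cb$ by $\phi(zB_L) = \dot\tau_w zB$; this is a closed immersion as the composition of the inclusion $\iota_L$ with left multiplication by $\dot\tau_w$. Establishing the cell-level containment~\eqref{eqn.shifted-cell} is the heart of the argument, after which the isomorphism statement will follow by a dimension/irreducibility comparison.

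First I would check that $\phi(C_{y_{\des(w)}}) \subseteq C_w$, where $C_{y_{\des(w)}}$ denotes the relevant Schubert cell in $\cb_L$. Since $w = \tau_w y_{\des(w)}$ is reduced with $\tau_w\in W^{\des(w)}$, the standard Bruhat product formula gives
\[
\dot\tau_w \cdot B\dot y_{\des(w)}B \;\subseteq\; B\dot\tau_w B \cdot B\dot y_{\des(w)}B \;=\; B\dot wB,
\]
and the inclusion $B_L\subseteq B$ then yields the Schubert cell containment.

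Next comes the main calculation. For $zB_L \in C_{y_{\des(w)}}\cap \Hess_L(X_{J,w})$, I must verify
\[
\Ad((\dot\tau_w z)^{-1})(X_J) \;=\; \Ad(z^{-1})\,\Ad(\dot\tau_w^{-1})(X_J) \;\in\; H_\Delta.
\]
Decompose $\Ad(\dot\tau_w^{-1})(X_J) = X_{J,w} + Y$, where $X_{J,w}\in\fl_w$ is the regular element supplied by Lemma~\ref{lemma.regular.proj}. The semisimple part $S_J\in\ft$ stays inside $\ft\subseteq \fl_w$, so it contributes nothing to $Y$, leaving
\[
Y \;=\; \sum_{\alpha\in J,\ \tau_w^{-1}(\alpha)\notin\Phi_w} E_{\tau_w^{-1}(\alpha)}.
\]
Each such $\tau_w^{-1}(\alpha)$ is positive by Lemma~\ref{lemma.tau-properties2} (using $\tau_w\in {^J}W$), so $Y\in \fu_{\des(w)}$, the nilradical of the standard parabolic subalgebra with Levi $\fl_w$. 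Since $\fu_{\des(w)}$ is $\Ad(L_w)$-invariant and $z\in L_w$, we obtain $\Ad(z^{-1})(Y)\in \fu_{\des(w)}\subseteq \fb \subseteq H_\Delta$. Combining this with the hypothesis $\Ad(z^{-1})(X_{J,w}) \in H_{\des(w)}\subseteq H_\Delta$ yields~\eqref{eqn.shifted-cell}.

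For the isomorphism, $\phi$ is a closed immersion, so $\phi(\Hess_L(X_{J,w}))$ is a closed irreducible subvariety of $\cb$ of dimension $|\des(w)|$ by Proposition~\ref{prop.affine.paving} applied inside $\cb_L$. Since $\Hess_L(X_{J,w}) = \overline{C_{y_{\des(w)}}\cap \Hess_L(X_{J,w})}$, taking closures of~\eqref{eqn.shifted-cell} gives $\phi(\Hess_L(X_{J,w})) \subseteq \overline{C_w\cap \Hess(X_J)}$, and the right-hand side is also closed irreducible of dimension $|\des(w)|$ by Proposition~\ref{prop.affine.paving}; the two subvarieties must therefore coincide. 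I expect the main obstacle to be the root-bookkeeping in the decomposition $\Ad(\dot\tau_w^{-1})(X_J) = X_{J,w}+Y$: Lemmas~\ref{lemma.tau-properties1} and~\ref{lemma.tau-properties2} are precisely what guarantees that $Y$ lies inside $\fu_{\des(w)}$ rather than leaking out as negative root vectors outside $\Delta^-$ that would violate the Hessenberg condition.
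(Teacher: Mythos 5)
Your proposal is correct and follows essentially the same route as the paper's proof: both decompose $\Ad(\dot\tau_w^{-1})(X_J)$ into its $\fl_w$-part $X_{J,w}$ plus a remainder lying in the nilradical $\fu_{\des(w)}$, use Lemma~\ref{lemma.tau-properties2} to place $\tau_w^{-1}(J)$ in $\Phi^+$, invoke $\Ad(L_w)$-invariance of $\fu_{\des(w)}$ to absorb the remainder into $H_\Delta$, and finish with the dimension/irreducibility comparison via Proposition~\ref{prop.affine.paving}. Your added remarks (that $\phi$ is a closed immersion, and the Bruhat-product check that $\dot\tau_w C_{y_{\des(w)}}\subseteq C_w$) make explicit what the paper treats as routine, but the substance is identical.
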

\begin{proof}  
Suppose $z B_L\in C_{y_{\des(w)}} \cap \Hess_L(X_{J,w})$ for some $z\in L$.  
Now,
\begin{eqnarray*}
\Ad((\dot\tau_w  z)^{-1})( X_{J}) &=&  \Ad(z^{-1})\left(\Ad \left(\dot\tau_w^{-1} \right) (X_J)\right)\\ 
&=&  \Ad(z^{-1}) \left(X_{J,w}+\sum_{\alpha\in \tau_w^{-1}(J)\setminus \Phi_{w}} E_\alpha \right).
\end{eqnarray*}
Since $z B_L \in \Hess_L(X_{J,w})$,
\[
\Ad( z^{-1})\left( X_{J,w} \right) \in H_{\des(w)} \subseteq H_\Delta.
\]
Furthermore $\tau_w^{-1}(J)\subseteq \Phi^+$ by Lemma~\ref{lemma.tau-properties2}, so for any $\alpha\in \tau_w^{-1}(J)\setminus \Phi_{w}$, we have $\alpha\in \Phi^+\setminus \Phi_{w}^+$.  Now for all $\alpha\in\Phi^+\setminus \Phi_{w}^+$, $E_\alpha$ is an element of nilradical of the standard parabolic subalgebra $\fp_w:= \fl_w \oplus \fu_w$ corresponding to the subset $\des(w)$ of simple roots, where $\fu_w:= \bigoplus_{\alpha\in \Phi^+\setminus \Phi_w^+} \fg_\alpha$.  The adjoint action of the Levi subgroup $L=L_w$ stabilizes $\fu_w$, so $\Ad(z)(E_\alpha)\in \fu_w \subseteq H_\Delta$.  Hence $\dot \tau_w z B\in \Hess(X_J)$, proving~\eqref{eqn.shifted-cell} since $\dot \tau_w(C_y)\subseteq C_w$ as $w=\tau_wy$ is reduced.

By Proposition~\ref{prop.affine.paving}, we have 
\[
\dim(C_{y_{\des(w)}} \cap \Hess_L(X_{J,w})) = |\des(w)| = \dim(C_w \cap \Hess(X_J)),
\]
and therefore~\eqref{eqn.shifted-cell} implies $\overline{\dot\tau_w(C_{y_{\des(w)}} \cap \Hess_L(X_{J,w})}) = \overline{C_w\cap \Hess(X_J)}$. The last assertion of the theorem now follows by the fact that $y_{\des(w)}$ is the longest element in the Weyl group of $W_{\des(w)}$ of $L=L_w$, and hence 
\[
\Hess_L(X_{J, w}) = \overline{C_{y_{\des(w)}} \cap \Hess_L(X_{J,w})}
\] 
by Proposition~\ref{prop.affine.paving}.
\end{proof}

\begin{remark} It is natural to consider a variation of Theorem~\ref{thm.cell.closure} for arbitrary regular Hessenberg varieties $\Hess(X,H)$.  Indeed, suppose $C_w\cap \Hess(X,H)\neq \emptyset$ and that $w=\tau_w y$ is the reduced decomposition defined in Remark~\ref{rem.decomp.H}.
Arguing as above (see~\cite[Prop.~3.14]{Harada-Precup} for a variation of this argument in the type A case), it follows that $C_w\cap \Hess(X,H) = \dot \tau_w(C_y\cap \Hess(\Ad(\dot{\tau}_w^{-1})(X),H))$ for all regular $X\in \fg$.  We conclude $\overline{C_w\cap \Hess(X,H)} \simeq \overline{C_y\cap \Hess(\Ad(\dot{\tau}_w^{-1})(X),H)}$. Unlike the case of $H=H_\Delta$, the Hessenberg--Schubert variety $\overline{C_y\cap \Hess(\Ad(\dot{\tau}_w^{-1})(X),H)}$ may not be equal to a regular Hessenberg variety in a smaller dimensional flag variety, and it is an open problem to understand these varieties in this general case.  For example, Cho, Huh, and Park~\cite{cho-huh-park} give conjectural pattern avoidance conditions characterizing which Hesssenerg-Schubert varieties for $G=GL_n(\mathbb{C})$ are smooth.  It is not even clear if these varieties have an affine paving.
\end{remark}

Theorem~\ref{thm.cell.closure} identifies the closure of each Hessenberg--Schubert cell $C_w\cap \Hess(X_J)$ in $\Hess(X_J)$ with a Hessenberg variety in the flag variety of a standard Levi subgroup of $G$ uniquely determined by $w$.  Furthermore, the Hessenberg variety in the Levi is a regular Hessenberg variety defined using the minimal indecomposable Hessenberg space. Our next results use this description to determine how each Hessenberg--Schubert variety intersects other Schubert cells and inclusion relations between Hessenberg--Schubert varieties.

\begin{corollary}\label{cor.closure.rel1} Suppose $w\in W$ is $J$-admissible, and write $w = \tau_w y_{\des(w)}$ as in~\eqref{eqn.right.des.decomp}.  Then 
\[
C_v \cap  \overline{C_w\cap \Hess(X_J)}\neq\emptyset
\]
if and only if $v$ is $J$-admissible and $v\in wW_{\des(w)}$. Furthermore, in that case
\[
C_v\cap  \overline{C_w\cap \Hess(X_J)} = \dot\tau_w \left( C_x\cap \Hess_L(X_{J,w}) \right)
\] 
for $x\in W_{\des(w)}$ such that $v=\tau_w x$.
\end{corollary}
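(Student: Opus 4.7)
The plan is to translate the question into one about the affine paving of $\Hess_L(X_{J,w})$ inside $\cb_L$ using Theorem~\ref{thm.cell.closure}, and then to track how the cells of that paving distribute among the Schubert cells of $\cb$ under the map $zB_L\mapsto \dot\tau_w zB$. The theorem gives
\[
\overline{C_w\cap \Hess(X_J)} = \dot\tau_w \cdot \iota_L\!\left(\Hess_L(X_{J,w})\right),
\]
and since $X_{J,w}$ is regular in $\fl_w$ by Lemma~\ref{lemma.regular.proj}, Proposition~\ref{prop.affine.paving} applied to $L=L_w$ affords an affine paving $\Hess_L(X_{J,w}) = \bigsqcup_{x}(C^L_x\cap \Hess_L(X_{J,w}))$ indexed by the $J_w$-admissible elements $x\in W_{\des(w)}$.

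Next I would verify that each piece $\dot\tau_w\,\iota_L(C^L_x)$ is contained in the Schubert cell $C_{\tau_w x}$ of $\cb$. Since $\tau_w\in W^{\des(w)}$ by Lemma~\ref{lemma.tau-properties2}, the product $\tau_w x$ is reduced for every $x\in W_{\des(w)}$ (by the basic coset decomposition recalled in Lemma~\ref{lemma.coset}), and $\tau_w(\Phi_{\des(w)}^+)\subseteq \Phi^+$ forces $\dot\tau_w B_L \dot\tau_w^{-1}\subseteq B$. A short Bruhat-style computation then shows $\dot\tau_w\, B_L\dot x B \subseteq B\dot\tau_w\dot x B$, so $\dot\tau_w\,\iota_L(C^L_x)\subseteq C_{\tau_w x}$. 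Combined with the uniqueness of the coset decomposition $v=\tau_w x$ with $\tau_w\in W^{\des(w)}$ and $x\in W_{\des(w)}$, this shows that $C_v$ meets $\overline{C_w\cap \Hess(X_J)}$ precisely when $v$ lies in the left coset $\tau_w W_{\des(w)} = wW_{\des(w)}$ and the unique $x\in W_{\des(w)}$ with $v=\tau_w x$ is $J_w$-admissible, in which case the intersection equals $\dot\tau_w(C^L_x\cap \Hess_L(X_{J,w}))$.

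The main obstacle, and the crux of the argument, is to show that $J_w$-admissibility of $x$ in $\fl_w$ is equivalent to $J$-admissibility of $v=\tau_w x$ in $\fg$, so that the nonemptyness condition can be stated in terms of $v$ alone. Using Lemma~\ref{lemma.tau-properties2} to obtain $\tau_w^{-1}(J)\subseteq \Phi^+$, I would decompose $\tau_w^{-1}(J) = J_w\sqcup J'$ with $J'=\tau_w^{-1}(J)\setminus \Phi_w \subseteq \Phi^+\setminus \Phi_w^+$. Since $W_{\des(w)}$ preserves the nilradical root set $\Phi^+\setminus \Phi_w^+$, one has $x^{-1}(J')\subseteq \Phi^+$ automatically, while $x^{-1}(J_w)\subseteq \Phi_w$. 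Because the only negative simple roots of $\fg$ lying in $\Phi_w^-$ are those in $\des(w)^-$ (as $\des(w)$ is the set of simple roots of $\fl_w$), the $J$-admissibility condition $v^{-1}(J)\subseteq \Delta^-\sqcup \Phi^+$ reduces to $x^{-1}(J_w)\subseteq \des(w)^-\sqcup \Phi_w^+$, which is precisely the $J_w$-admissibility criterion for $x$ in $\fl_w$ from Proposition~\ref{prop.affine.paving}. This matching, though technical, is a direct root-theoretic check once Lemmas~\ref{lemma.tau-properties2} and~\ref{lemma.tau-properties1} are in hand.
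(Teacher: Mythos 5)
Your proposal is correct and follows essentially the same route as the paper: both start from Theorem~\ref{thm.cell.closure}, use the reduced factorization $v=\tau_w x$ with $\tau_w\in W^{\des(w)}$ and $x\in W_{\des(w)}$ to match Schubert cells of $\cb$ with those of $\cb_L$, and reduce the non-emptiness condition to a root-theoretic equivalence between $J$-admissibility of $v$ in $\fg$ and $J_w$-admissibility of $x$ in $\fl_w$. The only difference is organizational: you establish that equivalence as a single biconditional via the decomposition $\tau_w^{-1}(J)=J_w\sqcup J'$, whereas the paper proves the forward implication as part of the backward direction of the corollary and lets the forward direction of the corollary supply the converse implicitly.
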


\begin{proof} Note that Theorem~\ref{thm.cell.closure} implies $\overline{C_w\cap \Hess(X_J)} = \dot \tau_w \Hess_L(X_{J,w})$.

Suppose first that $v$ is $J$-admissible and $v\in wW_{\des(w)}$.  
The latter condition is equivalent to stating that $v=\tau_wx$ for some $x\leq y_{\des(w)}$.  Since $\ell(v) = \ell(\tau_w)+\ell(x)$, we get $\dot\tau_w C_x\subseteq C_v$ and 
\[
C_x \cap \Hess_L(X_{J,w}) \neq \emptyset \Rightarrow   C_v \cap \overline{C_w\cap \Hess(X_J)}\neq \emptyset.
\]
Thus to prove the backward direction of the first statement, it suffices to show that $x$ is $J_w$-admissible, meaning that $C_x \cap \Hess_L(X_{J,w}) \neq \emptyset$.  Since $v$ is $J$-admissible, $v^{-1}(J)\subseteq \Phi^+ \sqcup \Delta^-$.  Given $\alpha\in J_w=\tau_w^{-1}(J)\cap\Phi_{w}$, 
\begin{eqnarray*}
x^{-1}(\alpha)\in x^{-1}\tau_w^{-1}(J)\cap x^{-1}(\Phi_{w})=v^{-1}(J)\cap\Phi_{w}\subseteq  \Phi_w^+ \sqcup \des(w)^-,
\end{eqnarray*}
where $\des(w)^- = \{-\alpha\mid \alpha\in \des(w)\}$. Therefore $x$ is $J_w$-admissible.

Now suppose $C_v \cap \overline{C_w\cap \Hess(X_J)}\neq\emptyset$. Then $v$ must be $J$-admissible, and
\[
C_v \cap  \dot\tau_w \,\Hess_L(X_{J,w}) \neq\emptyset.
\]
In particular, $C_v \cap \dot\tau_w C_x\neq\emptyset$ for some $x\in W_{\des(w)}$.  Since $\tau_w\in W^{\des(w)}$, $\tau_wx$ is reduced, and hence $v=\tau_wx$.

By the above, we may now assume $v=\tau_wx$ is $J$-admissible.  That $C_v\cap \dot\tau_w(\cb_L) = \dot \tau_w(C_x)$ implies
\[
C_v\cap  \overline{C_w\cap \Hess(X_J)} = \dot\tau_w \left( C_x\cap \Hess_L(X_{J,w}) \right),
\]
as desired.
\end{proof}

\begin{corollary} Suppose $w\in W$ is $J$-admissible and $v=\tau_w x$ for some $x\in W_{\des(w)}$.  Then $\dim(C_v\cap  \overline{C_w\cap \Hess(X_J)}) = |\des(x)|$.
\end{corollary}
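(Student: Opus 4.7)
The plan is to reduce the dimension computation to the analogous cell dimension inside the smaller flag variety $\cb_L$, where $L=L_w$. By Corollary~\ref{cor.closure.rel1}, the intersection can be rewritten as
\[
C_v \cap \overline{C_w\cap \Hess(X_J)} = \dot\tau_w\bigl( C_x \cap \Hess_L(X_{J,w}) \bigr),
\]
and since left translation by $\dot\tau_w$ is an isomorphism of varieties, the dimension of the left-hand side equals the dimension of the Hessenberg--Schubert cell $C_x \cap \Hess_L(X_{J,w})$ in $\cb_L$.

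The next step is to apply Proposition~\ref{prop.affine.paving} to this smaller Hessenberg variety. Lemma~\ref{lemma.regular.proj} guarantees that $X_{J,w}$ is a regular element of $\fl_w$ of the required form~\eqref{eqn.reg.element}, and by Theorem~\ref{thm.cell.closure} the relevant Hessenberg space in $\fl_w$ is the minimal indecomposable one. Proposition~\ref{prop.affine.paving} then yields that, when nonempty, the dimension of $C_x \cap \Hess_L(X_{J,w})$ equals $|\des_L(x)|$, where $\des_L(x) := \inv(x) \cap \des(w)$ is the set of right descents of $x$ regarded as an element of the Weyl group $W_{\des(w)}$ of $L$.

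Finally, I would observe that $\des_L(x) = \des(x)$. Since $x\in W_{\des(w)}$, every root inversion of $x$ lies in $\Phi_w^+ = \Phi_{\des(w)}^+$. Consequently, any simple root in $\inv(x)$ lies in $\Phi_w^+ \cap \Delta = \des(w)$, so $\inv(x) \cap \Delta = \inv(x) \cap \des(w)$, giving $\des(x) = \des_L(x)$ as desired. There is no real obstacle to this argument; the corollary is a direct consequence of Corollary~\ref{cor.closure.rel1} together with the Hessenberg--Schubert cell dimension formula of Proposition~\ref{prop.affine.paving}, once one matches the two notions of descents for elements of a parabolic subgroup.
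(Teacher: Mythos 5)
Your proof is correct and takes the same approach as the paper: the paper states the corollary is "a direct application of Corollary~\ref{cor.closure.rel1} and Proposition~\ref{prop.affine.paving}," and you have simply spelled out the two applications, along with the (true and worth noting) observation that descents of $x\in W_{\des(w)}$ computed inside $W_{\des(w)}$ agree with descents of $x$ computed in $W$.
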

\begin{proof} A direct application of Corollary~\ref{cor.closure.rel1} and Proposition~\ref{prop.affine.paving}. 
\end{proof}

\begin{corollary} \label{cor.closure.rel2} Suppose $w\in W$ is $J$-admissible.  Then
\[
C_v\cap  \Hess(X_J) \subseteq \overline{C_w\cap \Hess(X_J)}
\]
if and only if $v$ is $J$-admissible, $v\in wW_{\des(w)}$, and $\des(v) \subseteq \des(w)$.
\end{corollary}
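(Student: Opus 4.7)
The plan is to combine Corollary~\ref{cor.closure.rel1} with a dimension comparison, reducing the desired equivalence to a clean identity for the descent set of a product of the form $\tau_w x$.

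The statement is only of interest when the cell $C_v\cap\Hess(X_J)$ is nonempty (otherwise the containment holds trivially while the condition on the right-hand side of the iff fails), so I may assume throughout that $v$ is $J$-admissible by Proposition~\ref{prop.affine.paving}. The containment $C_v\cap\Hess(X_J)\subseteq\overline{C_w\cap\Hess(X_J)}$ is then equivalent to the equality $C_v\cap\Hess(X_J)=C_v\cap\overline{C_w\cap\Hess(X_J)}$. By Corollary~\ref{cor.closure.rel1}, the intersection on the right is nonempty if and only if $v\in wW_{\des(w)}$; when $v=\tau_w x$ with $x\in W_{\des(w)}$, it equals $\dot\tau_w(C_x\cap\Hess_L(X_{J,w}))$, which is irreducible of dimension $|\des(x)|$. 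Since the ambient cell $C_v\cap\Hess(X_J)\cong\C^{|\des(v)|}$ is itself irreducible, a closed subvariety of matching dimension must fill it, and so the containment reduces to the single numerical equality $|\des(v)|=|\des(x)|$.

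It remains to show that this equality holds precisely when $\des(v)\subseteq\des(w)$. Because $v=\tau_w x$ is reduced, $\inv(v)=\inv(x)\sqcup x^{-1}\inv(\tau_w)$. The element $x\in W_{\des(w)}$ has $\inv(x)\subseteq\Phi_w^+$, so $\des(x)=\inv(x)\cap\Delta\subseteq\Phi_w^+\cap\Delta=\des(w)$. The condition $\tau_w\in W^{\des(w)}$ forces $\inv(\tau_w)\subseteq\Phi^+\setminus\Phi_w^+$. The key combinatorial fact I would verify is the standard parabolic observation that $W_{\des(w)}$ permutes $\Phi^+\setminus\Phi_w^+$ setwise: for $\alpha\in\des(w)$ and $\beta\in\Phi^+\setminus\Phi_w^+$, the image $s_\alpha(\beta)$ lies in $\Phi^+\setminus\{\alpha\}$ because simple reflections stabilize $\Phi^+\setminus\{\alpha\}$, and it cannot land in $\Phi_w^+$ without forcing $\beta\in\Phi_w$; induction on length then extends this to all of $W_{\des(w)}$. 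Consequently $x^{-1}\inv(\tau_w)\cap\Delta\subseteq\Delta\setminus\des(w)$, so
\[
\des(v)=\des(x)\,\sqcup\,\bigl(x^{-1}\inv(\tau_w)\cap\Delta\bigr),
\]
with the two pieces lying in $\des(w)$ and $\Delta\setminus\des(w)$ respectively. Therefore $|\des(v)|=|\des(x)|$ if and only if the second piece is empty, which is exactly the condition $\des(v)\subseteq\des(w)$.

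I expect the only genuinely nontrivial step to be the parabolic lemma that $W_{\des(w)}$ stabilizes $\Phi^+\setminus\Phi_w^+$ as a set; everything else is dimension counting together with Corollary~\ref{cor.closure.rel1} and Proposition~\ref{prop.affine.paving}. That lemma is precisely what pins the ``extra'' descents of $\tau_w x$---those contributed by $\inv(\tau_w)$---to lie outside $\des(w)$, which is why the descent-inclusion criterion on $v$ relative to $w$ turns out to be the right one.
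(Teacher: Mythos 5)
Your proof is correct and follows essentially the same path as the paper: reduce via Corollary~\ref{cor.closure.rel1} to the dimension count $|\des(v)|=|\des(x)|$ where $v=\tau_w x$, then translate this to $\des(v)\subseteq\des(w)$ using $\des(v)\cap\des(w)=\des(x)$. Your parabolic stability argument (that $W_{\des(w)}$ permutes $\Phi^+\setminus\Phi_w^+$, so the descents contributed by $x^{-1}\inv(\tau_w)$ lie outside $\des(w)$) correctly supplies the justification for $\des(v)\cap\des(w)=\des(x)$, a step the paper asserts without proof.
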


\begin{proof} By Corollary~\ref{cor.closure.rel1}, $C_v\cap  \Hess(X_J) \subseteq \overline{C_w\cap \Hess(X_J)}$ if and only if
\[
C_v \cap \Hess(X_J) = \dot\tau_w(C_x\cap \Hess_L(X_{J,w})) 
\]
for some $x\in W_{\des(w)}$ such that $v=\tau_w x$, which occurs if and only if
\[
\dim(C_v \cap \Hess(X_J))=\dim(C_x\cap\Hess_L(X_{J,w})).
\]
By Proposition~\ref{prop.affine.paving}, $\dim(C_v\cap\Hess(X_J))=|\des(v)|$ and $\dim(C_x\cap\Hess_L(X_{J,w})) = |\des(x)|$. As $v=\tau_wx$ is a reduced product, $\des(x)\subseteq\des(v)$ and we obtain the desired equality if and only if $\des(v)\subseteq \des(w)$ since $\des(v)\cap \des(w) = \des(x)$. 
This completes the proof.
\end{proof}

\begin{example}[Type $\mathrm{B}_4$]\label{ex.typeB4.3} Let $G=O_9(\C)$, $\fg=\fo_9(\C)$, and $J=\{\alpha_1, \alpha_2, \alpha_4\}$ as in Examples~\ref{ex.typeB4.1} and~\ref{ex.typeB4.2}.  Recall from Example~\ref{ex.typeB4.2} that when $w=s_1s_3s_4$, we obtain $\des(w) = \{\alpha_1, \alpha_4\}$ and therefore $\cb_{L} = \overline{U_{\alpha_1}\dot s_1 B_Z} \times \overline{U_{\alpha_4} \dot s_4 B_Z} \simeq \mathbb{P}^1 \times \mathbb{P}^1$.  By Theorem~\ref{thm.cell.closure}, the closure of the Hessenberg--Schubert cell $C_{s_1s_3s_4}\cap \Hess(X_J)$ is isomorphic to the regular Hessenberg variety in $\cb_L$ corresponding to the regular element $X_{J,s_1s_3s_4} = X_{\{\alpha_1\}}\in \fl_w$.  In particular $\tau_w=s_3$ and we get
\[
\overline{C_{s_1s_3s_4}\cap \Hess(X_J)} = \dot s_3 \left(  \Hess_L(X_{\{\alpha_1\}}) \right) =  \dot s_3 \left(\overline{U_{\alpha_1}\dot s_1 B_Z} \times \overline{U_{\alpha_4} \dot s_4 B_Z} \right)
\]
since $\Hess_L(X_{\alpha_1}) = \cb_L$ in this case, as the dimension of this example is small and the conditions defining $\Hess_L(X_{\alpha_1})$ are vacuous. We see that  
\[
\dot s_3 \dot s_1 B , \dot s_3 \dot s_4 B, \dot s_3B \in  \overline{C_{s_1s_3s_4}\cap \Hess(X_J)}.
\] 
 This is compatible with the results of Corollary~\ref{cor.closure.rel1} as each of $s_3s_1$, $s_3s_4$, and $s_3$ is $J$-admissible and each is an element of $w W_{\{\alpha_1, \alpha_4\}}= s_3 W_{\{\alpha_1, \alpha_4\}}$. If $v$ is either $s_3s_1$ or $s_3$, we have $C_v\cap \Hess(X_J)\not\subseteq\overline{C_{s_1s_3s_4}\cap \Hess(X_J)}$ since $\des(v) \not\subseteq \des(w) = \{\alpha_1, \alpha_4\}$ while  $C_{s_3s_4}\cap \Hess(X_J)\subseteq \overline{C_{s_1s_3s_4}\cap \Hess(X_J)}$ by Corollary~\ref{cor.closure.rel2}.

Now consider $w=s_1s_2s_1s_3s_2s_1$ with $\des(w)=\{\alpha_1, \alpha_2, \alpha_3\}$.  Here $\cb_L$ is isomorphic to the type A flag variety $GL_4(\C)/B$.  As $J_w=\{\alpha_1, \alpha_2\}$ we get 
\[
\overline{C_{s_1s_2s_1s_3s_2s_1}\cap \Hess(X_J)}   = \Hess_L(X_{\{\alpha_1, \alpha_2\}}) \simeq \Hess(X_{(3,1)}) \subseteq \GL_4(\C)/B
\]
by Theorem~\ref{thm.cell.closure}.  All $J$-admissible $v$ such that $v\in w W_{\des(w)} = W_{\des(w)}$ satisfy $\des(v)\subseteq \des(w)$ since $\tau_w=e$ in this case. Corollary~\ref{cor.closure.rel2} now implies 
\[
C_v\cap \Hess(X_J) \subseteq \overline{C_{s_1s_2s_1s_3s_2s_1}\cap \Hess(X_J)}
\] 
for all such $v$.  
\end{example}

We can interpret our results in type A using the combinatorics of one-line notation.  Given a permutation, we say that $i,j\in [n]$ are \emph{in the same descent block} of $w$ whenever $i<j$ and $w(i)<w(i+1)<\cdots<w(j)$.  Swapping entries in the one-line notation of $w$ with positions in the same descent block yields the one-line notation of all permutations in the coset $wW_{\des(w)}$.  Thus, according to Corollary~\ref{cor.closure.rel1}, the permutation flag $\dot vB$ is an element of the Hessenberg--Schubert variety $\overline{C_w\cap \Hess(X_J)}$ if and only if $v$ is $J$-admissible and obtained from $w$ by permuting entries within each descent block.

\begin{example}[Type $\mathrm{A}_3$] \label{ex.A3.2} Let $G=GL_4(\C)$ and $\mu=(2,2)$ with $J = J_{\mu}=\{\alpha_1, \alpha_3\}$ as in~Example~\ref{ex.A3.1}. Consider the $J_\mu$-admissible permutation $w=s_2s_1s_2s_3s_2 = 3421$.  In this case we have $\tau_w=s_2s_1 = 3124$ and $y_{\des(w)} = 1432$.  The descent blocks of $w$ are $\{1\},\{ 2,3,4 \}$ and $W_{\des(w)} = S_{\{2,3,4\}}$.  The following table summarizes the relevant data. 
\[
\begin{array}{c|c|c|c|c|c|c}
v\in wS_{\{2,3,4\}} &  w=3421 & 3241 & 3412 & 3214 & 3142 & 3124 \\ \hline
\textup{reduced word} & s_2s_1s_2s_3s_2 & s_2s_1s_2s_3 & s_2s_1s_3s_2 & s_2s_1s_2 & s_2s_1s_3 & s_2s_1 \\ \hline
\des(w)  &  \{\alpha_2, \alpha_3\} &  \{ \alpha_1, \alpha_3\} & \{\alpha_2\} & \{\alpha_1, \alpha_2\} &   \{\alpha_1, \alpha_3\} & \{ \alpha_1 \}\\ \hline
\textup{$J_\mu$-admissible?} &  \textup{yes} & \textup{no} & \textup{yes} & \textup{yes} & \textup{yes} & \textup{yes}\\ \hline
x\in S_{\{2,3,4\}} \text{ such } & y_{\des(w)} & \text{N/A} & s_3s_2 & s_2 & s_3 & e   \\ 
\text{ that } v=\tau_wx &  &  & & & 
\end{array}
\]
Corollaries~\ref{cor.closure.rel1} and~\ref{cor.closure.rel2} tell us that each of $C_{3412}$, $C_{3214}$, $C_{3142}$, and $C_{3124}$ intersect $\overline{C_w\cap \Hess(X_\mu)}$ non-trivially.  We further have that $C_{3412}\cap \Hess(X_{\mu}) \subseteq \overline{C_w\cap \Hess(X_\mu)}$ and
\begin{eqnarray*}
C_{3214}\cap \Hess(X_\mu) &=& \dot\tau_w (C_{1324} \cap \Hess_L(X_{J_\mu,w})) = U_{\alpha_1} \dot{s}_2\dot{s}_1\dot{s}_2B \\
C_{3142}\cap \Hess(X_\mu) &=& \dot\tau_w (C_{1243} \cap \Hess_L(X_{J_\mu,w}) ) = U_{\alpha_2+\alpha_3}\dot{s}_2\dot{s}_1\dot{s}_3B \\
C_{3124}\cap \Hess(X_\mu) &=& \dot\tau_w (C_{1234} \cap \Hess_L(X_{J_\mu,w}) ) = \{ \dot{\tau}_w B \}.
\end{eqnarray*}
Our computations above make use of the fact that each intersection $C_x\cap \Hess_L(X_{J_\mu,w})$ is either all of $C_x$ (which happens for $x=1324 = s_2$ and $x=1243=s_3$) or a point (which occurs when $x=e=1234$). 
\end{example}

We end this section with a lemma on the subset of simple roots $J_w: =\tau_w^{-1}(J)\cap \Phi_w\subseteq \des(w)$ that will be useful later. 

\begin{lemma}\label{lemma.Jw.sets} Suppose $w$ is $J$-admissible with $w=y_K v$ for $K\subseteq \Delta(v)$ and $v\in {^J}W$ the decomposition from Corollary~\ref{cor.cells}.  Then $y_{\des(w)}(J_w)= v^{-1}(K^-)$.
\end{lemma}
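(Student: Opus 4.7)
The plan is to reduce the identity to proving $\tau_w^{-1}(K) = J_w$, after which the conclusion follows immediately from the two decompositions of $w$. Specifically, since $w = y_K v$ and $w = \tau_w y_{\des(w)}$, we have two expressions for the inverse: $w^{-1} = v^{-1} y_K = y_{\des(w)} \tau_w^{-1}$ (using that $y_K$ is an involution). Applying both to $K$ yields
\[
v^{-1}(K^-) = v^{-1}(y_K(K)) = w^{-1}(K) = y_{\des(w)}(\tau_w^{-1}(K)),
\]
so once we know $\tau_w^{-1}(K) = J_w$, the conclusion $y_{\des(w)}(J_w) = v^{-1}(K^-)$ is immediate.

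For the inclusion $\tau_w^{-1}(K) \subseteq J_w$, I would use the identity $v = \tau_w y_{\des(w)} y_{v^{-1}(K)}$ extracted from the proof of Lemma~\ref{lemma.tau-properties2}. Rearranging gives $v^{-1}(K) = y_{v^{-1}(K)} y_{\des(w)}(\tau_w^{-1}(K))$. By Lemma~\ref{lemma.inv.decomp}, $v^{-1}(K) \subseteq \des(w) \subseteq \Phi_w$, and since $y_{v^{-1}(K)}$ and $y_{\des(w)}$ both lie in $W_{\des(w)}$, which stabilizes the root subsystem $\Phi_w$, we conclude $\tau_w^{-1}(K) \subseteq \Phi_w$. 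Combined with $\tau_w^{-1}(K) \subseteq \tau_w^{-1}(J)$, this yields $\tau_w^{-1}(K) \subseteq \tau_w^{-1}(J) \cap \Phi_w = J_w$.

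For the reverse inclusion, suppose $\tau_w^{-1}(\alpha) \in \Phi_w$ with $\alpha \in J$, and argue $\alpha \in K$ by contradiction. Assume $\alpha \in J \setminus K$. Since $\tau_w \in {^J}W$ by Lemma~\ref{lemma.tau-properties2}, $\tau_w^{-1}(\alpha) \in \Phi^+$, so $\tau_w^{-1}(\alpha) \in \Phi_w^+$; hence $w^{-1}(\alpha) = y_{\des(w)}(\tau_w^{-1}(\alpha)) \in \Phi_w^-$. On the other hand, $\alpha \in \Phi_J^+ \setminus \Phi_K^+$, and the standard fact that the longest element of a parabolic subgroup of a Weyl group permutes the positive roots outside the smaller subsystem yields $y_K(\alpha) \in \Phi_J^+$. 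Using $v \in {^J}W$, we obtain $w^{-1}(\alpha) = v^{-1}(y_K(\alpha)) \in v^{-1}(\Phi_J^+) \subseteq \Phi^+$, contradicting $w^{-1}(\alpha) \in \Phi_w^-$. Thus $\alpha \in K$, completing the proof.

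The only real obstacle is the reverse inclusion, whose success hinges on invoking the permutation property of $y_K$ on $\Phi_J^+ \setminus \Phi_K^+$ to force a sign contradiction between the two factorizations. Everything else is bookkeeping comparing the decompositions $w = y_K v$ and $w = \tau_w y_{\des(w)}$.
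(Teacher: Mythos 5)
Your proof is correct, and it takes a genuinely different route from the paper's. You reduce the identity to the cleaner equivalent statement $\tau_w^{-1}(K)=J_w$, which you then prove by two inclusions: the forward one by tracking $\Phi_w$-membership through the factorization $v=\tau_w y_{\des(w)} y_{v^{-1}(K)}$ and using that $W_{\des(w)}$ stabilizes $\Phi_w$, and the reverse by a sign contradiction pitting $w^{-1}(\alpha)\in\Phi_w^-$ (from the $\tau_w y_{\des(w)}$ decomposition) against $w^{-1}(\alpha)\in\Phi^+$ (from the $y_K v$ decomposition plus the fact that $y_K$ permutes $\Phi_J^+\setminus\Phi_K^+$). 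The paper instead works directly with the set $w^{-1}(J)$: it decomposes $y_K(J)$ as $(y_K(J)\cap\Phi_J^+)\sqcup K^-$, pushes this through $v^{-1}$ to get $w^{-1}(J)=(w^{-1}(J)\cap\Phi^+)\sqcup v^{-1}(K^-)$, and then intersects with $\Phi^-$ and with $\des(w)^-$ to identify $y_{\des(w)}(J_w)$ with $v^{-1}(K^-)$ by a two-sided containment. Your approach is a bit longer but arguably more transparent, since the key intermediate fact $\tau_w^{-1}(K)=J_w$ is interesting in its own right (it says the two ways of reading off the ``nilpotent support'' from the two coset decompositions of $w$ agree), whereas the paper's argument, while shorter, is essentially a bookkeeping computation with inversion sets.
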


\begin{proof} We first note that $y_K(J) = \left( y_K(J)\cap \Phi_J^+ \right) \sqcup K^-$ as $K\subseteq J$.  Thus 
\[
w^{-1}(J) =  \left( w^{-1}(J)  \cap v^{-1}(\Phi_J^+) \right) \sqcup v^{-1}(K^-) = \left(w^{-1}(J)\cap \Phi^+\right) \sqcup v^{-1}(K^-),
\]
where the second equality follows from the fact that $v^{-1}(\Phi_J^+)\subseteq \Phi^+$ as $v\in {^J}W$. We conclude that 
\begin{eqnarray}\label{eqn.lemma.Jw1}
w^{-1}(J)\cap \Phi^- = v^{-1}(K^-) \; \textup{ and } \;  w^{-1}(J)\cap v^{-1}(K^-)=v^{-1}(K^-).
\end{eqnarray}
We also have
\[
y_{\des(w)}(J_w) = y_{\des(w)}\left( \tau_w^{-1}(J)\cap \des(w)  \right) = w^{-1}(J) \cap \des(w)^-
\]
since $y_{\des(w)}(\des(w)) = \des(w)^-$.  Combining the equation above with~\eqref{eqn.lemma.Jw1} and Lemma~\ref{lemma.inv.decomp}, 
\[
y_{\des(w)}(J_w)  \subseteq w^{-1}(J)\cap \Phi^- = v^{-1}(K),
\]
and 
\[
v^{-1}(K^-) = w^{-1}(J)\cap v^{-1}(K^-) \subseteq w^{-1}(J) \cap \des(w)^- = y_{\des(w)}(J_w).
\]
This concludes the proof.
\end{proof}

\begin{corollary} The Hessenberg--Schubert variety $\overline{C_w\cap \Hess(X_J)}$ is smooth for all $w\in {^J}W$.
\end{corollary}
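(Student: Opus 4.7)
The plan is to reduce the statement to the regular semisimple case of Theorem~\ref{thm.cell.closure}, where the Hessenberg variety on the right-hand side becomes the toric variety associated to the Weyl chambers of a Levi subgroup, which is already known to be smooth by~\cite{DPS1992}.

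First, I would observe that when $w\in {^J}W$, the uniqueness of the decomposition $w=y'v'$ with $y'\in W_J$ and $v'\in {^J}W$ in Lemma~\ref{lemma.coset} forces the decomposition $w=y_Kv$ from Corollary~\ref{cor.cells} to be the trivial one $w=e\cdot w$. So $K=\emptyset$, $v=w$, and in particular $w$ is $J$-admissible. I would then apply Lemma~\ref{lemma.Jw.sets} with $K=\emptyset$ to deduce $y_{\des(w)}(J_w)=v^{-1}(K^-)=\emptyset$. Since $y_{\des(w)}$ is a bijection and $J_w\subseteq \des(w)$, this forces $J_w=\emptyset$.

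Consequently, the nilpotent part $N_{J,w}=\sum_{\alpha\in J_w}E_\alpha$ of the regular element $X_{J,w}$ defined in~\eqref{eqn.XJ.projection} vanishes, so $X_{J,w}=S_{J,w}\in \ft$. By Lemma~\ref{lemma.regular.proj}, $X_{J,w}$ remains regular in $\fl_w$, hence is in fact regular semisimple. Now Theorem~\ref{thm.cell.closure} supplies an isomorphism $\overline{C_w\cap \Hess(X_J)}\simeq \Hess_L(X_{J,w})$, where the right side is the Hessenberg variety in $\cb_L$ attached to the regular semisimple element $X_{J,w}\in\fl_w$ and the minimal indecomposable Hessenberg space $H_{\des(w)}=H_\Delta\cap \fl_w$ of $\fl_w$. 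This is precisely the toric variety of the Weyl chambers of $L_w$, which is smooth by~\cite{DPS1992}. I do not anticipate any genuine obstacle; the argument is essentially bookkeeping once one isolates $J_w=\emptyset$ as the decisive consequence of $w\in {^J}W$.
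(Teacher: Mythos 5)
Your proof is correct and follows essentially the same route as the paper: both arguments use Lemma~\ref{lemma.Jw.sets} to show that $w\in {^J}W$ forces $K=\emptyset$ and hence $N_{J,w}=0$, so $X_{J,w}$ is regular semisimple, and then invoke Theorem~\ref{thm.cell.closure} together with the smoothness of regular semisimple Hessenberg varieties from~\cite{DPS1992}.
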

\begin{proof} Recall that $N_{J,w} = \sum_{\alpha\in J_w} E_{\alpha}$ is the nilpotent part of $X_{J,w}\in \fl_w$.  By Lemma~\ref{lemma.Jw.sets}, for each $K\subseteq \Delta(v)$ we have
\[
\Ad(\dot y_{\des(w)})\left( N_{J,w} \right) = \sum_{\beta\in v^{-1}(K)} E_{-\beta}.
\]
In other words, the subset $K$ uniquely determines the nilpotent part of $X_{J,w}$.  In light of Theorem~\ref{thm.cell.closure}, we see that the Hessenberg--Schubert variety $\overline{C_w\cap \Hess(X_J)}\simeq \Hess_L(X_{J,w})$ is isomorphic to a regular semisimple Hessenberg variety if and only if $K=\emptyset$, that is, if $w\in {^J}W$.  In that case, $X_{J,w}=S_{J,w}$ and the Hessenberg--Schubert variety $\overline{C_w\cap \Hess(X_J)} \simeq \Hess_L(S_{J,w})$ is smooth~\cite{DPS1992}. 
\end{proof}

Section~\ref{sec:singular_loci} below studies singularities of arbitrary regular Hessenberg varieties.  We obtain a complete characterization of all smooth Hessenberg--Schubert varieties in $\Hess(X_J)$ in Theorem~\ref{thm.smooth.Hess-Schub} below.

%%%%%%%%%%%%%%%%%%%%%%%

\section{$K$-theory and cohomology classes of Hessenberg--Schubert varieties} \label{sec:ktheory}

In this section we use Theorem~\ref{thm.cell.closure} and work of Abe, Fujita, and Zeng \cite{Abe-Fujita-Zeng2020} to calculate the $K$-class $[\co_{\overline{C_w\cap \Hess(X_J)}}]$ (respectively, cohomology class $[\overline{C_w\cap \Hess(X_J)}]$) of the Hessenberg--Schubert variety $\overline{C_w\cap \Hess(X_J)}$ in the $K$-theory $K^0(\cb)$ (respectively, cohomology $H^*(\cb)$) of the flag variety $\cb$. As our formulas show, these classes are independent of $J$.  Furthermore, in the case of the Peterson variety $\Pet_\Delta := \Hess(N, H_\Delta)$, we recover the formula of Abe, Horiguchi, Kuwata, and Zeng~\cite{AHKZ-2024} for the classes $[\overline{C_w\cap \Pet_\Delta}]$ in the cohomology ring $H^*(\Pet_\Delta)$ of the Peterson variety.

Let $\ca$ be a be a lower order ideal of positive roots and $\ca'=\ca\setminus\{\beta\}$, where $\beta\in \ca$ is a maximal element. Recall the corresponding Hessenberg spaces
\[
H_\ca=\fb \oplus \bigoplus_{\gamma\in \ca} \fg_{-\gamma} \; \text{ and } \;
H_{\ca'}=\fb \oplus \bigoplus_{\gamma\in \ca'} \fg_{-\gamma},
\]
such that $H_\ca=H_{\ca'}\oplus \fg_{-\beta}$.  Let $\mathcal{L}_\beta$ denote the line bundle on $G/B$ of weight $\beta$.    Abe, Fujita, and Zeng show the following~\cite[\S4 and Corollary~4.2]{Abe-Fujita-Zeng2020}. 

\begin{theorem}\label{thm.AFZ}
For all regular $X\in \fg$, there is an exact sequence of sheaves on $\cb$,
$$0\rightarrow \mathcal{L}_{-\beta}\rightarrow \mathcal{O}_{\Hess(X,H_\ca)}\rightarrow\mathcal{O}_{\Hess(X,H_{\ca'})} \rightarrow 0.$$
Hence, for any lower order ideal $\ca$, the class of $\Hess(X,H_\ca)$ in $K^0(\cb)$ is 
$$[\mathcal{O}_{\Hess(X,H_\ca)}] = \prod_{\alpha\in\Phi^+\setminus \ca} (1-[\mathcal{L}_{-\alpha}]).$$
\end{theorem}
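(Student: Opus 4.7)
The plan is to realize $\Hess(X, H_{\ca'})$ as the scheme-theoretic zero locus of a section of a line bundle on $\Hess(X, H_\ca)$, check regularity of that section, and then apply a Koszul-type resolution together with induction.

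For any Hessenberg space $H$, consider the vector bundle $\mathcal{G}/\mathcal{H} := G\times_B (\fg/H)$ on $\cb$ (well-defined because $H$ is $\Ad(B)$-invariant), together with the tautological section $\sigma_X$ sending $gB$ to the class of $\Ad(g^{-1})X$ in $\fg/H$. By construction, the scheme-theoretic zero locus of $\sigma_X$ is $\Hess(X,H)$. The short exact sequence of $B$-modules
\[
0 \to H_\ca/H_{\ca'} \to \fg/H_{\ca'} \to \fg/H_\ca \to 0
\]
descends via $G\times_B(-)$ to a short exact sequence of vector bundles on $\cb$, and $H_\ca/H_{\ca'}$ is isomorphic as a $B$-module to $\fg_{-\beta}$; the unipotent radical of $B$ acts trivially on this one-dimensional quotient precisely because $\beta$ is maximal in $\ca$. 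Under the conventions of the paper the associated line bundle is $\mathcal{L}_\beta$, whose dual is $\mathcal{L}_{-\beta}$. Restricting the section $\sigma_X$ for $H_{\ca'}$ to $\Hess(X,H_\ca)$ lands in this kernel, producing a section $s_X$ of $\mathcal{L}_\beta|_{\Hess(X,H_\ca)}$ whose scheme-theoretic zero locus is exactly $\Hess(X,H_{\ca'})$.

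If $s_X$ is a regular section (a nonzerodivisor on $\mathcal{O}_{\Hess(X,H_\ca)}$), then the usual Koszul sequence
\[
0 \to \mathcal{L}_{-\beta}|_{\Hess(X,H_\ca)} \xrightarrow{\,\cdot\, s_X\,} \mathcal{O}_{\Hess(X,H_\ca)} \to \mathcal{O}_{\Hess(X,H_{\ca'})} \to 0
\]
is exact, and pushing forward along the closed immersion into $\cb$ yields the exact sequence in the statement. Regularity of $s_X$ is the main obstacle: a sufficient condition is that $\Hess(X,H_\ca)$ is Cohen--Macaulay and $\Hess(X,H_{\ca'})$ has pure codimension one in it. Dimension formulas for regular Hessenberg varieties and irreducibility in the regular case give $\dim\Hess(X,H_\ca) - \dim\Hess(X,H_{\ca'}) = 1$, so the codimension condition holds; Cohen--Macaulayness can be obtained by leveraging the flat family of regular Hessenberg varieties (degenerate to a regular semisimple $X_0$, where $\Hess(X_0,H_\ca)$ is a smooth toric variety, and transfer Cohen--Macaulayness along the flat family).

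To obtain the product formula, pass to $K^0(\cb)$: applying the projection formula to the closed immersion $\Hess(X,H_\ca)\hookrightarrow \cb$ and rearranging the short exact sequence gives
\[
[\mathcal{O}_{\Hess(X,H_{\ca'})}] = (1-[\mathcal{L}_{-\beta}])\cdot [\mathcal{O}_{\Hess(X,H_\ca)}].
\]
Starting from $\ca = \Phi^+$, where $H_{\Phi^+} = \fg$ forces $\Hess(X,H_{\Phi^+}) = \cb$ and $[\mathcal{O}_\cb] = 1$, one removes elements of $\Phi^+$ one at a time in an order compatible with the partial order (so every intermediate subset remains a lower order ideal) and obtains the claimed formula $[\mathcal{O}_{\Hess(X,H_\ca)}] = \prod_{\alpha\in \Phi^+\setminus \ca}(1-[\mathcal{L}_{-\alpha}])$ by induction on $|\Phi^+\setminus \ca|$.
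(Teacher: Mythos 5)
This theorem is not proved in the paper at all: the text simply records it as a result of Abe, Fujita, and Zeng (citing their \S4 and Cor.~4.2), followed by a remark that the extension from $\Delta\subseteq\ca$ to arbitrary lower order ideals $\ca$ requires interpreting $\Hess(X,H_\ca)$ as the scheme cut out by the natural equations, with the details deferred to \cite{ITW2020}. Your Koszul-section reconstruction is the correct mechanism underlying that reference, and the reduction of the short exact sequence to regularity of the tautological section $s_X$ of $\mathcal{L}_\beta|_{\Hess(X,H_\ca)}$, together with the induction over $|\Phi^+\setminus\ca|$, is exactly right in outline.

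However, your argument for regularity of $s_X$ has a real gap. You propose to obtain Cohen--Macaulayness of $\Hess(X,H_\ca)$ by degenerating to a regular semisimple $X_0$ and ``transferring'' CM along the flat family. This does not work as stated: CM of a special fiber in a flat family is an open condition on the base, so it propagates from a general point to nearby points, not from the smooth fiber at $X_0$ to an arbitrary regular $X$. To run a degeneration argument you would need to know the total space of the family is CM, which you do not establish. The cleaner (and the one actually used in the literature) route is to observe that $\Hess(X,H_\ca)$ is cut out in $\cb$ by exactly $|\Phi^+\setminus\ca|$ equations and, once the dimension formula $\dim\Hess(X,H_\ca)=|\ca|$ is in hand, is therefore a local complete intersection, hence CM with no embedded primes; then $s_X$ not vanishing on any component (which follows from the same dimension count one step further down) gives regularity. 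Note also that the dimension formula and the local complete intersection property for lower order ideals $\ca$ with $\Delta\not\subseteq\ca$ are not automatic and are precisely what the paper's remark and its citation of \cite{ITW2020} are addressing; your induction passes through such $\ca$, so this point cannot be skipped. Two smaller issues: for $\ca\neq\Delta$ the smooth regular semisimple Hessenberg variety is generally not a toric variety, and for $\Delta\not\subseteq\ca$ it need not be irreducible, so ``irreducibility in the regular case'' should be replaced by an honest equidimensionality argument. Finally, the displayed sequence in the theorem has $\mathcal{L}_{-\beta}$ rather than its restriction to $\Hess(X,H_\ca)$; your version with $\mathcal{L}_{-\beta}|_{\Hess(X,H_\ca)}$ is the correct one, and the pushforward you invoke produces $i_*\bigl(\mathcal{L}_{-\beta}|_{\Hess(X,H_\ca)}\bigr)$, not the ambient line bundle.
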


Note that Abe, Fujita, and Zeng state their theorem for regular elements $X$ and for $\ca$ such that $\Delta\subseteq \ca$, but it holds for arbitrary lower-order ideals $\ca$ as long as we interpret $\Hess(X,H_\ca)$ as the scheme defined by the obvious polynomials rather than as a reduced variety (see, for example, the discussion in~\cite[\S 4]{ITW2020}).  In the case where $\Delta\subseteq \ca$ and $X$ is regular, Abe, Fujita, and Zeng prove that the scheme defined by the obvious polynomials is reduced.  Recall that given $w\in W$ we write $L=L_w$ for the Levi subgroup of $G$ corresponding to the subset $\des(w)\subseteq \Delta$. 

\begin{corollary} \label{cor.classes.Hess.Schubert} For all $J\subseteq\Delta$ and $J$-admissible $w\in W$ we have
\[
[\co_{\overline{C_w\cap\Hess(X_J)}}]=[\co_{\mathcal{B}_{L}}]\prod_{\alpha\in \Phi_w^-\setminus\des(w)^- } (1-[\mathcal{L}_{\alpha}])
\]
in $K^0(\cb)$, and, taking the Chern map $\chern: K^0(\cb) \to H^*(\cb)$, we have in $H^*(\cb)$ that
\[
[\overline{C_w\cap\Hess(X_J)}]=[\mathcal{B}_L]\prod_{\alpha\in \Phi_w^-\setminus\des(w)^-} \chern(\mathcal{L}_{\alpha}).
\]
\end{corollary}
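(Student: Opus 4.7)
The plan is to combine Theorem~\ref{thm.cell.closure} with the Abe--Fujita--Zeng formula recalled above, applied inside the smaller flag variety $\cb_L$ rather than $\cb$. Here are the steps I would carry out.

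First, I would use Theorem~\ref{thm.cell.closure} to replace $\overline{C_w\cap \Hess(X_J)}$ by $\dot\tau_w\cdot\iota_L(\Hess_L(X_{J,w}))$ inside $\cb$. Since $G$ is connected, left multiplication by $\dot\tau_w$ is an automorphism of $\cb$ homotopic to the identity, so it acts trivially on $K^0(\cb)$ and $H^*(\cb)$. Thus it suffices to compute the pushforward $(\iota_L)_*[\co_{\Hess_L(X_{J,w})}]$ from $K^0(\cb_L)$ to $K^0(\cb)$.

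Next, I would apply the Abe--Fujita--Zeng formula (Theorem~\ref{thm.AFZ}) inside $\cb_L$. Recall that by Lemma~\ref{lemma.regular.proj}, $X_{J,w}$ is a regular element of $\fl_w$, and $\Hess_L(X_{J,w})$ is by definition the regular Hessenberg variety in $\cb_L$ associated to the minimal indecomposable Hessenberg space $H_{\des(w)}\subseteq \fl_w$, whose corresponding lower order ideal of roots in $\Phi_w^+$ is exactly $\des(w)$. Hence
\[
[\co_{\Hess_L(X_{J,w})}]=\prod_{\alpha\in\Phi_w^+\setminus\des(w)}\bigl(1-[\mathcal{L}_{-\alpha}^L]\bigr)
\]
in $K^0(\cb_L)$, where $\mathcal{L}_{-\alpha}^L$ is the line bundle of weight $-\alpha$ on $\cb_L$.

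Third, I would push forward to $K^0(\cb)$. Since $\iota_L^*\mathcal{L}_{-\alpha}=\mathcal{L}_{-\alpha}^L$ for every weight $-\alpha$, the projection formula gives
\[
(\iota_L)_*[\co_{\Hess_L(X_{J,w})}]=\prod_{\alpha\in\Phi_w^+\setminus\des(w)}\bigl(1-[\mathcal{L}_{-\alpha}]\bigr)\cdot(\iota_L)_*[\co_{\cb_L}]=[\co_{\cb_L}]\prod_{\alpha\in\Phi_w^+\setminus\des(w)}\bigl(1-[\mathcal{L}_{-\alpha}]\bigr).
\]
Reindexing by $\beta=-\alpha$ converts $\alpha\in\Phi_w^+\setminus\des(w)$ into $\beta\in\Phi_w^-\setminus\des(w)^-$, yielding the claimed $K$-theoretic formula. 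The cohomology formula then follows by applying the Chern character, using multiplicativity and the fact that $\chern(1-[\mathcal{L}_\beta])$ begins in positive degree, so the product truncates correctly against the fundamental class.

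The main things to justify carefully are the insensitivity of the class to the translation by $\dot\tau_w$ (which follows from connectedness of $G$) and the compatibility of the Abe--Fujita--Zeng formula with the inclusion $\iota_L$, i.e.\ that the scheme $\Hess_L(X_{J,w})$ defined by the obvious polynomials coincides with the reduced variety we want; this is guaranteed by Abe--Fujita--Zeng's reducedness result since $H_{\des(w)}$ is indecomposable in $\fl_w$ and $X_{J,w}$ is regular by Lemma~\ref{lemma.regular.proj}. Beyond these checks the computation is essentially formal.
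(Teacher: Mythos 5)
Your proof is correct and follows essentially the same strategy as the paper: apply Theorem~\ref{thm.cell.closure} to reduce to $\Hess_L(X_{J,w})$, invoke the Abe--Fujita--Zeng formula in $K^0(\cb_L)$, transfer the result to $K^0(\cb)$ using the fact that the relevant line bundles on $\cb_L$ are restrictions of line bundles on $\cb$, and argue away the translation by $\dot\tau_w$.

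The one minor point of divergence is the mechanism for transferring the $K$-class from $\cb_L$ to $\cb$: you invoke the projection formula $(\iota_L)_*(\iota_L^*\xi\cdot\eta)=\xi\cdot(\iota_L)_*\eta$, while the paper instead observes that the Abe--Fujita--Zeng exact sequences of sheaves on $\cb_L$ remain exact when viewed as sheaves on $\cb$ (supported on the image of $\cb_L$), which gives the class relation directly in $K^0(\cb)$. These are essentially the same argument in different clothing. Likewise, your homotopy-invariance argument for killing the translation by $\dot\tau_w$ is equivalent to the paper's observation that $\dot\tau_w$ fixes the $G$-equivariant line bundles and preserves exactness. Both proofs leave the passage from $K$-theory to cohomology at the same informal level (taking the Chern map), so your parenthetical remark about degree truncation is no less rigorous than what the paper does. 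Two cosmetic remarks: you should say $\iota_L^*\mathcal{L}_{-\alpha}=\mathcal{L}_{-\alpha}^L$ for weights $-\alpha$ of $G$, not arbitrary weights $-\alpha$ of $L$ (the paper spends a sentence on this potential ambiguity, though here the ambiguity is moot because the $\alpha$ in the product are roots of $\fg$); and by $[\co_{\cb_L}]$ in $K^0(\cb)$ both you and the paper implicitly mean $(\iota_L)_*[\co_{\cb_L}]$, which should be made explicit the first time it appears.
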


\begin{proof} For any $L$ a Levi subgroup of $G$, the line bundle $\mathcal{L}_{-\beta}$ on $\mathcal{B}_L$ is the restriction of a line bundle $\mathcal{L}_{(-\beta')}$ on $\mathcal{B}$.  Here $\beta'$ is some weight on $G$ whose restriction to $L$ gives $\beta$.  Note there are many such weights; we identify weights with coweights using the usual inner product and take $\beta'$ to be the image of $\beta$ as a coweight.  Hence, the exact sequence from Theorem~\ref{thm.AFZ} remains exact when extended from $\mathcal{B}_L$ to $\mathcal{B}$, and, as a class in $K^0(\mathcal{B})$, we have
\[
[\co_{\Hess_L(X_{J,w})}]=[\co_{\mathcal{B}_L}]\prod_{\alpha\in \Phi_w^-\setminus\des(w)^- } (1-[\mathcal{L}_{\alpha}]).
\]

Acting by $\dot w\in N(T) \subseteq G$ preserves exactness, and $\dot w\mathcal{L}_\alpha=\mathcal{L}_\alpha$ for any $G$-equivariant line bundle $\mathcal{L}_\alpha$, so Theorem~\ref{thm.cell.closure} now yields the required result.  
\end{proof}

Note the classes in Corollary~\ref{cor.classes.Hess.Schubert} of each Hessenberg--Schubert variety $\overline{C_w\cap \Hess(X_J)}$ depend on $J$ only to the extent that whether or not the intersection $C_w\cap\Hess(X_J)$ is empty depends on $J$.

Let $L=L_I$ be the standard Levi subgroup associated to a subset $I\subseteq \Delta$ as in Section~\ref{sec.alg.gps}.  We give a formula for the class $[\co_{\cb_I}]\in K^0(\cb)$ of the flag variety $\cb_I:=L_I/B_I$ that fits particularly well into the formulas above.  We suspect this formula is known to experts, but cannot find a reference so we include it here.  Consider the regular semisimple Hessenberg variety $\Hess(S, \fp_I)$,
where 
\[
\fp_I:= \fl_I \oplus \fu_I =  \fb\oplus \bigoplus_{\alpha\in \Phi_I^+} \fg_{-\alpha}
\]  
is the parabolic subalgebra associated to the Levi subalgebra $\fl_I:= \Lie(L_I)$.  
De Mari, Procesi, and Shayman~\cite{DPS1992} show that all regular semisimple Hessenberg varieties are smooth and therefore defined as reduced schemes by the obvious polynomials.  Thus, by Theorem~\ref{thm.AFZ},
\[
[\co_{\Hess(S, \fp_I)}]=\prod_{\alpha\in \Phi^-\setminus \Phi_I^-} (1-[\mathcal{L}_\alpha]).
\]
Moreover, $\Hess(S, \fp_I)$ consists of $|W|/|W_I|$ many disjoint connected components, each the translate of $\cb_I$ by some $w\in W^I$.  Hence we obtain the following.

\begin{lemma} \label{lemma.levi.class} Let $I\subseteq \Delta$ and $L=L_I$ the associated standard Levi subgroup of $G$ with Weyl group $W_I$.  Then
\[
[\co_{\mathcal{B}_L}]=\frac{|W_I|}{|W|}\prod_{\alpha\in \Phi^-\setminus \Phi_I^-} (1-[\mathcal{L}_\alpha]).
\]
\end{lemma}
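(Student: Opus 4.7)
The plan is to establish the geometric identification
\[
\Hess(S, \fp_I) = \bigsqcup_{w \in W^I} \dot w \cdot \cb_L
\]
as a disjoint union of translates of $\cb_L = L_I/B_I \hookrightarrow \cb$, and then transport this to a $K$-theoretic identity. First, I would observe that each translate $\dot w \cdot \cb_L$ lies inside $\Hess(S, \fp_I)$: because $S \in \ft \subseteq \fl_I$, the element $\Ad(\dot w^{-1})(S) = w^{-1}(S)$ still lies in $\ft \subseteq \fl_I$, and the adjoint action of $L_I$ preserves $\fl_I \subseteq \fp_I$. The translate depends only on the left coset $wW_I$, since $\dot y \in L_I$ fixes $\cb_L$ whenever $y \in W_I$, yielding $|W|/|W_I|$ candidate components indexed by $W^I$.

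Next I would verify disjointness of these translates. The preimage of $\cb_L \subseteq \cb$ under $G \to G/B$ is the parabolic subgroup $P_I = L_I B$, and the Bruhat decomposition $P_I = \bigsqcup_{y \in W_I} B \dot y B$ shows that $\dot w \in P_I$ if and only if $w \in W_I$. Consequently, the translates $\dot w \cdot \cb_L$ for distinct $w \in W^I$ are pairwise disjoint. To conclude these exhaust $\Hess(S, \fp_I)$, I would combine smoothness (from~\cite{DPS1992}) with the dimension equality $\dim \cb_L = |\Phi_I^+| = \dim \Hess(S, \fp_I)$: a smooth irreducible subvariety of the same dimension as the ambient smooth variety is a connected component, and one can verify that no component is missed either by a $T$-fixed point count (each side carries exactly $|W|$ fixed points, with $|W_I|$ per translate) or by summing dimensions.

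The final step is a short $K$-theory calculation. A disjoint union of closed subvarieties produces the sum of its components' structure sheaf classes, so
\[
[\co_{\Hess(S, \fp_I)}] = \sum_{w \in W^I} [\co_{\dot w \cdot \cb_L}].
\]
Because $G$ is connected, left multiplication by any $\dot w$ is homotopic to the identity on $\cb$ and thus acts trivially on $K^0(\cb)$, giving $[\co_{\dot w \cdot \cb_L}] = [\co_{\cb_L}]$ for every $w \in W^I$. Combining this with Theorem~\ref{thm.AFZ} applied to the lower order ideal $\Phi_I^+ \subseteq \Phi^+$ yields the claimed formula after dividing by $|W|/|W_I|$. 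The main obstacle I anticipate is the exhaustion step; the cleanest route will likely be via smoothness plus a dimension count, leaning on the reducedness of $\Hess(S, \fp_I)$ that was noted in the discussion immediately preceding the lemma.
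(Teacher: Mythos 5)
Your proposal is correct and follows the same route as the paper: identify $\Hess(S,\fp_I)$ as the disjoint union of the $|W|/|W_I|$ translates $\dot w\cdot\cb_L$ for $w\in W^I$, use that translation by elements of the connected group $G$ acts trivially on $K^0(\cb)$ so each translate has class $[\co_{\cb_L}]$, and then apply Theorem~\ref{thm.AFZ} (together with smoothness/reducedness of $\Hess(S,\fp_I)$ from~\cite{DPS1992}) to compute $[\co_{\Hess(S,\fp_I)}]$. The paper asserts the disjoint-union decomposition without proof, whereas you supply the containment, disjointness (via $P_I=L_IB$ and Bruhat), and exhaustion (via the $T$-fixed-point count) arguments explicitly; that is additional useful detail rather than a different method.
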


The lemma now simplifies the formulas given in Corollary~\ref{cor.classes.Hess.Schubert}.

\begin{corollary}  \label{cor.classes.Hess.Schubert2} For all $J\subseteq \Delta$ and $J$-admissible $w\in W$,
\[
[\co_{\overline{C_w\cap\Hess(X_J)}}]=\frac{|W_{\des(w)}|}{|W|}\prod_{\alpha\in \Phi^-\setminus \des(w)^-} (1-[\mathcal{L}_{\alpha}])
\]
in $K^0(\cb)$, and, taking the Chern map $\mathrm{ch}: K^0(\cb) \to H^*(\cb)$, we have in $H^*(\cb)$ that
\[
[\overline{C_w\cap\Hess(X_J)}] = \frac{|W_{\des(w)}|}{|W|}\prod_{\alpha\in \Phi^-\setminus \des(w)^-} \mathrm{ch}(\cl_\alpha).
\]
\end{corollary}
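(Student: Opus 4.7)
The proof is a direct substitution combining Corollary~\ref{cor.classes.Hess.Schubert} with Lemma~\ref{lemma.levi.class}. The plan is to write each factor in Corollary~\ref{cor.classes.Hess.Schubert} using the $K$-class formula from Lemma~\ref{lemma.levi.class} and then merge the two products by observing that the index sets are complementary.

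Specifically, I would begin by applying Lemma~\ref{lemma.levi.class} with $I=\des(w)$, so that $L_I=L_w$ and $\Phi_I = \Phi_w$, to obtain
\[
[\co_{\cb_L}]=\frac{|W_{\des(w)}|}{|W|}\prod_{\alpha\in \Phi^-\setminus \Phi_w^-} (1-[\mathcal{L}_\alpha]).
\]
Substituting this expression into the $K$-theoretic formula from Corollary~\ref{cor.classes.Hess.Schubert} gives
\[
[\co_{\overline{C_w\cap\Hess(X_J)}}] = \frac{|W_{\des(w)}|}{|W|}\left(\prod_{\alpha\in \Phi^-\setminus \Phi_w^-} (1-[\mathcal{L}_\alpha])\right)\left(\prod_{\alpha\in \Phi_w^-\setminus \des(w)^-} (1-[\mathcal{L}_\alpha])\right).
\]

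The key observation is then that, since $\des(w)^- \subseteq \Phi_w^- \subseteq \Phi^-$, we have a disjoint union
\[
(\Phi^- \setminus \Phi_w^-) \sqcup (\Phi_w^- \setminus \des(w)^-) = \Phi^- \setminus \des(w)^-,
\]
so the two products merge into a single product indexed by $\Phi^-\setminus \des(w)^-$, yielding the claimed $K$-theoretic formula. The cohomological formula follows immediately by applying the Chern character $\chern:K^0(\cb)\to H^*(\cb)$, which is a ring homomorphism, and recalling that $\chern(\mathcal{O}_Y)$ agrees with the fundamental class $[Y]$ modulo lower-dimensional terms for any closed subvariety $Y$ of $\cb$; for the precise identification of top-degree components one uses the same argument as in~\cite{Abe-Fujita-Zeng2020}, noting that the degree of each factor $1-[\cl_\alpha]$ contributes precisely the corresponding divisor class $\chern(\cl_\alpha)$ in top degree.

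There is no real obstacle here: the argument is entirely formal once Corollary~\ref{cor.classes.Hess.Schubert} and Lemma~\ref{lemma.levi.class} are in hand. The only bookkeeping is the verification of the disjoint union identity above, which is immediate from the inclusions $\des(w)^- \subseteq \Phi_w^- \subseteq \Phi^-$.
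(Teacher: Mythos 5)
Your proof is correct and follows essentially the same route as the paper: apply Lemma~\ref{lemma.levi.class} with $I = \des(w)$, substitute into Corollary~\ref{cor.classes.Hess.Schubert}, and merge the two products via the disjoint union $(\Phi^-\setminus\Phi_w^-)\sqcup(\Phi_w^-\setminus\des(w)^-)=\Phi^-\setminus\des(w)^-$. The paper compresses this into a single sentence; you spell out the bookkeeping, and your handling of the passage to cohomology (identifying the lowest-degree piece of the Chern character) is the same observation implicitly invoked there.
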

\begin{proof} The formula follows immediately from Lemma~\ref{lemma.levi.class} and Corollary~\ref{cor.classes.Hess.Schubert} using the facts that $L=L_w$ and the simple roots of $\Phi_w$ are, by definition, $\des(w)$.
\end{proof}

\begin{example}
\label{ex.A3.3}
Let $G=GL_4(\mathbb{C})$ and $w=3421$ as in Example~\ref{ex.A3.2}.  We have $\des(w)=\{\alpha_2,\alpha_3\}$, so
\begin{align*}
[\overline{C_w\cap\Hess(X_J)}]&=\frac{1}{4}\mathrm{ch}(\cl_{-\alpha_1})\mathrm{ch}(\cl_{-\alpha_1-\alpha_2})\mathrm{ch}(\cl_{-\alpha_1-\alpha_2-\alpha_3})\mathrm{ch}(\cl_{-\alpha_2-\alpha_3}) \\
& = \frac{1}{4}(x_1-x_2)(x_1-x_3)(x_1-x_4)(x_2-x_4),
\end{align*}
where $x_i$ are the Chern roots of the tautological line bundles as in the usual presentation of the cohomology ring of $\cb=GL_n(\C)/B$.
\end{example}

Next we recover a formula of Abe, Horiguchi, Kuwata, and Zeng~\cite{AHKZ-2024} for classes in $H^*(\Pet_\Delta)$, the cohomology ring of the Peterson variety.  Let $[\overline{C_w\cap \Pet_\Delta}]$ denote the homology class in $H_*(\Pet_\Delta)$, and $\iota: \Pet_\Delta \rightarrow \cb$ the inclusion map.  (They state their formula only for type A, but their methods, just as ours, also work in arbitrary type.)

As the Peterson variety $\Pet_\Delta$ is in general singular, interpretation of Poincar\'e dual to the class $[\overline{C_w\cap \Pet_\Delta}] \in H_*(\Pet_\Delta)$ requires a little care.  What we find is a class ${\mathfrak{H}_w}$ such that the cap product ${\mathfrak{H}_w} \cap F$ is equal to the homology class of $\overline{C_w\cap \Pet_\Delta}$ where $F\in H_*(\Pet_\Delta)$ is the fundamental homology class of $\Pet_\Delta$.  It turns out this is possible even though Poincar\'e duality does not, a priori, hold on~$\Pet_\Delta$ since it is singular.

Abe, Horiguchi, Kuwata, and Zeng~\cite{AHKZ-2024} use the formula below for the classes ${\mathfrak{H}_w}$ to give a Graham-positive formula for their multiplicative structure constants, connecting with substantial earlier work about the cohomology of the Peterson variety as described in their paper.  A more recent paper by Goldin, Mihalcea, and Singh~\cite{GMS2021} also gives a positive formula for the structure constants in equivariant cohomology using a different formula for the same classes.

\begin{corollary} For each admissible element $w=y_K\in W$, the class
\[
{\mathfrak{H}_w} = \iota^*\left( \frac{|W_K|}{|W|}\prod_{\alpha\in(\Delta^-\setminus\Delta_K^-)} \chern(\mathcal{L}_{\alpha}) \right)
\]
is the Poincar\'e dual (in the sense discussed above) class to $[\overline{C_{w}\cap \Pet_\Delta}]$ in $H^*(\Pet_\Delta)$.
\end{corollary}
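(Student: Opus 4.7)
The plan is to derive the corollary from Corollary~\ref{cor.classes.Hess.Schubert2} via the projection formula for the closed embedding $\iota: \Pet_\Delta \hookrightarrow \cb$. Applying Corollary~\ref{cor.classes.Hess.Schubert2} with $J = \Delta$ and $w = w_0$ (so $\des(w_0) = \Delta$) yields the cohomology class of $\Pet_\Delta$ in $H^*(\cb)$,
\[
c_\Pet \;:=\; \prod_{\alpha \in \Phi^- \setminus \Delta^-} \chern(\cl_\alpha),
\]
while applying it to an admissible $w = y_K$ (so $\des(y_K) = K$) yields
\[
\tilde c_w \;:=\; \frac{|W_K|}{|W|} \prod_{\alpha \in \Phi^- \setminus K^-} \chern(\cl_\alpha)
\]
for the cohomology class of $\overline{C_w \cap \Pet_\Delta}$ in $H^*(\cb)$. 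Setting $c := \frac{|W_K|}{|W|} \prod_{\alpha \in \Delta^- \setminus \Delta_K^-} \chern(\cl_\alpha)$ so that $\mathfrak{H}_w = \iota^*(c)$, the key elementary identity one needs is
\[
c \cup c_\Pet \;=\; \tilde c_w \quad \text{in } H^*(\cb),
\]
which follows from the disjoint-union decomposition $(\Delta^- \setminus \Delta_K^-) \sqcup (\Phi^- \setminus \Delta^-) = \Phi^- \setminus K^-$ (recall $\Delta_K^- = K^-$ since $K \subseteq \Delta$).

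Applying the projection formula to $\iota$, together with the fact $\iota_*[\Pet_\Delta] = c_\Pet \cap [\cb]$, gives
\[
\iota_*\bigl(\mathfrak{H}_w \cap [\Pet_\Delta]\bigr) \;=\; c \cap \iota_*[\Pet_\Delta] \;=\; (c \cup c_\Pet) \cap [\cb] \;=\; \tilde c_w \cap [\cb] \;=\; \iota_*[\overline{C_w \cap \Pet_\Delta}],
\]
so the homology classes $\mathfrak{H}_w \cap [\Pet_\Delta]$ and $[\overline{C_w \cap \Pet_\Delta}]$ agree after pushforward to $H_*(\cb)$. To upgrade this to equality in $H_*(\Pet_\Delta)$ itself, I would use two facts: (i)~the affine paving of $\Pet_\Delta$ from Proposition~\ref{prop.affine.paving} and Corollary~\ref{cor.Peterson.cells} implies $H^*(\Pet_\Delta)$ and $H_*(\Pet_\Delta)$ are torsion-free, so the cap-product pairing between them is non-degenerate; and (ii)~the restriction map $\iota^*: H^*(\cb) \to H^*(\Pet_\Delta)$ is surjective (due to Harada--Tymoczko in type~A, extended to all Lie types by subsequent work). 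Writing any $\eta \in H^*(\Pet_\Delta)$ as $\iota^*(\tilde\eta)$ and applying the projection formula a second time gives
\[
\bigl\langle \eta,\, \mathfrak{H}_w \cap [\Pet_\Delta]\bigr\rangle \;=\; (\tilde\eta \cup c \cup c_\Pet)\cap [\cb] \;=\; (\tilde\eta \cup \tilde c_w)\cap [\cb] \;=\; \bigl\langle \eta,\, [\overline{C_w \cap \Pet_\Delta}]\bigr\rangle,
\]
so the two classes pair identically with every cohomology class and must agree.

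The conceptual content is the disjoint-union identity above, and the argument is otherwise bookkeeping with the projection formula. The main obstacle is technical: Poincar\'e duality fails on the singular variety $\Pet_\Delta$, so one cannot directly convert the known equality $c \cup c_\Pet = \tilde c_w$ into an equality in $H_*(\Pet_\Delta)$. Routing through $\iota_*$ and combining with the surjectivity of $\iota^*$ circumvents this, but does require invoking the surjectivity as an external input; the alternative, showing directly that the Hessenberg--Schubert classes push forward to a linearly independent set in $H_*(\cb)$, is the other reasonable way to close the argument.
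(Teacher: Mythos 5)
The proposal is correct and follows the paper's proof in all essentials: both derive the identity $\iota_*(\mathfrak{H}_w \cap [\Pet_\Delta]) = \iota_*[\overline{C_w\cap\Pet_\Delta}]$ from Corollary~\ref{cor.classes.Hess.Schubert2} applied to $w_0$ and to $y_K$, the disjoint-union identity $(\Delta^-\setminus\Delta_K^-)\sqcup(\Phi^-\setminus\Delta^-)=\Phi^-\setminus\Delta_K^-$, and the projection formula. Your closing step, which pairs against $H^*(\Pet_\Delta)$ using surjectivity of $\iota^*$ and freeness of (co)homology, is a dual reformulation of the paper's direct appeal to injectivity of $\iota_*$ (equivalent facts here, given the affine pavings), and you correctly identify the paper's route as the alternative way to finish.
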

\begin{proof} Recall from Corollary~\ref{cor.Peterson.cells} that $w\in W$ is $\Delta$-admissible if and only if $w=y_K$ for $K\subseteq \Delta$. 
By Poincar\'e duality and by Corollary~\ref{cor.classes.Hess.Schubert2},  
$$\iota_*([\overline{C_{w}\cap\Pet_\Delta }]) = \frac{|W_K|}{|W|}\left( \prod_{\alpha\in(\Phi^-\setminus\Delta_K^-)} \chern(\mathcal{L}_{\alpha}) \right) \cap [\cb],$$
and
$$\iota_*([\Pet_\Delta]) = \left( \prod_{\alpha\in(\Phi^-\setminus\Delta^-)} \chern(\mathcal{L}_{\alpha}) \right) \cap [\cb].$$
Hence, since homology is a module of the cohomology ring, 
$$\iota_*([\overline{C_{w}\cap\Pet_\Delta }]) = \left( \frac{|W_K|}{|W|}\prod_{\alpha\in(\Delta^-\setminus\Delta_K^-)} \chern(\mathcal{L}_{\alpha}) \right) \cap \iota_*([\Pet_\Delta]),$$
and by naturality,
$$\iota_*([\overline{C_{w}\cap\Pet_\Delta }]) = \iota_*\left(\iota^*\left( \frac{|W_K|}{|W|}\prod_{\alpha\in(\Delta^-\setminus\Delta_K^-)} \chern(\mathcal{L}_{\alpha}) \right) \cap [\Pet_\Delta] \right).$$
Since $\iota_*$ is injective, we now obtain
$$[\overline{C_{w}\cap\Pet_\Delta }] = \iota^*\left( \frac{|W_K|}{|W|}\prod_{\alpha\in(\Delta^-\setminus\Delta_K^-)} \chern(\mathcal{L}_{\alpha}) \right) \cap [\Pet_\Delta]$$
as desired.
\end{proof}

%%%%%%%%%%%%%%%%%%%

\section{Patches in $\Hess(X_J)$} \label{sec.patches}

The goal of the next sections is to study the singular loci of the regular Hessenberg varieties $\Hess(X_J)$.     Before commencing with these arguments, we introduce the necessary notation and prove some technical lemmas.

Recall the definition of the root subgroups from~\eqref{eqn.root.subgp} of Section~\ref{sec.alg.gps}.  Consider the unipotent subgroup $U_-$ generated by negative root subgroups in $G$,
\[
U_-= \prod_{\gamma\in \Phi^-} U_\gamma \simeq \C^{\N},
\]
where $\N:=|\Phi^-|$.
For each $w\in W$, we set
\[
U_w:= \dot wU_- \dot w^{-1}  = \prod_{\gamma\in w(\Phi^-)}  U_\gamma \simeq \C^{\N}.
\]
The map $U_w \to \cb$ defined by $u\mapsto u\dot wB$ defines a coordinate chart in $\cb$ centered at $\dot w B$.  For each $J\subseteq \Delta$ we define the \emph{\textup{(}shifted\textup{)} patch of $\Hess(X_J)$ at $\dot wB$} by
\[
\cn_{w, J} :=  \{ u\in  U_w \mid  \Ad(\dot w^{-1}u^{-1})(X_J) \in H_\Delta  \}. 
\]
Note that $\cn_{w,J}$ is isomorphic to the intersection $U_w \dot wB\cap \Hess(X_J)$ of the coordinate chart centered at $\dot wB$ with the Hessenberg variety via $u \mapsto u\dot wB$. 
The \emph{\textup{(}shifted\textup{)} patch ideal} $\ci_{w,J}$ of $\cn_{w, J}$ is the reduced ideal defining $\cn_{w, J}$ as an algebraic subset of $\C^{N} \simeq U_w$.  

\begin{remark}\label{rem.patches} The reader familiar with literature such as~\cite{Insko-Yong2012, Abe-Fujita-Zeng2020, Woo-Yong2008} will notice that the patches defined there are affine subvarieties (or subschemes) of $\C^{\N} \simeq \dot w U_-$ (cf.~\cite[Eq.~(11)]{Insko-Yong2012} or~\cite[Definition 3.1]{Abe-Fujita-Zeng2020}). Our ``shifted'' patches are isomorphic, as affine varieties in $\C^{\N}$, to the patches defined therein via restriction of the isomorphism $U_w \to \dot w U_-, u \mapsto u\dot w$.  The reason for this change is that our choice of coordinates greatly simplifies the notation below, and, by abuse of notation, we will refer to $\cn_{w,J}$ and $\ci_{w,J}$, respectively, as the patch and patch ideal.
\end{remark}

Given the set $\mathbf{z}:= \{ z_{\gamma} \mid \gamma\in w(\Phi^-) \}$ of variables, we define $\mathbf{u}\in \C[\mathbf{z}] \otimes_\C U_w$ by 
\begin{eqnarray}\label{eqn.def.u}
\mathbf{u}:= u_{\gamma_1}(z_{\gamma_1}) u_{\gamma_2}(z_{\gamma_2}) \cdots u_{\gamma_{\N}}(z_{\gamma_{\N}}), 
\end{eqnarray}
where $\N=|\Phi^-|$, $\gamma_1, \gamma_2, \ldots, \gamma_{\N}$ is some fixed total ordering of the roots in $w(\Phi^-)$, and $u_\gamma(z_\gamma) := \exp(z_\gamma E_{\gamma}) \in \C[\mathbf{z}]\otimes_\C U_{\gamma}$. For each $\beta\in \Phi$, let $\pi_\beta : \C[\mathbf{z}] \otimes_\C \fg  \to \C[\mathbf{z}]$ denote the projection onto the $\beta$-root space. We apply the results of~\cite{Abe-Fujita-Zeng2020} to give the following simple description of each patch ideal. 

\begin{lemma}\label{lemma.generators} Let $w\in W$ be $J$-admissible and $\mathbf{u}$ be as in~\eqref{eqn.def.u}.  Then
\begin{eqnarray}\label{eqn.patch.ideal}
\ci_{w,J} = \left< \pi_\eta(\Ad(\mathbf{u}^{-1})(X_J)) \mid \eta \in w\left( \Phi^- \setminus \Delta^- \right) \right>.
\end{eqnarray}
\end{lemma}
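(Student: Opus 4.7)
The plan is to first translate the set-theoretic defining condition of $\cn_{w,J}$ into polynomial conditions on the coordinates $z_\gamma$, and then invoke the reducedness result of~\cite{Abe-Fujita-Zeng2020} to upgrade the set-theoretic equality to an ideal-theoretic one.

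First, I would unpack the definition root space by root space. For a point $u \in U_w$, the condition $\Ad(\dot w^{-1} u^{-1})(X_J) \in H_\Delta$ is equivalent to the vanishing of the $\fg_\beta$-component of $\Ad(\dot w^{-1} u^{-1})(X_J)$ for every $\beta \in \Phi^- \setminus \Delta^-$, since the root support of $H_\Delta$ is precisely $\Phi^+ \sqcup \Delta^-$. Because $\Ad(\dot w^{-1})$ carries $\fg_\eta$ onto $\fg_{w^{-1}(\eta)}$, these conditions translate directly into the vanishing of $\pi_\eta(\Ad(u^{-1})(X_J))$ for all $\eta \in w(\Phi^- \setminus \Delta^-)$. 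Substituting the universal element $\mathbf{u}$ from~\eqref{eqn.def.u} in place of $u$, and using that $U_w \to \cb$, $u \mapsto u\dot w B$, is an open immersion with coordinates $\{z_\gamma\}_{\gamma \in w(\Phi^-)}$, I obtain that the polynomials on the right-hand side of~\eqref{eqn.patch.ideal} cut out $\cn_{w,J}$ as a set. This immediately shows that the ideal $\ci'$ these polynomials generate has the same vanishing locus as $\ci_{w,J}$, i.e.\ $\sqrt{\ci'} = \ci_{w,J}$.

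The main step is then to promote this to an equality of ideals, which amounts to showing that $\ci'$ is already reduced. This is where I would apply the result of~\cite{Abe-Fujita-Zeng2020} recorded in the discussion surrounding Theorem~\ref{thm.AFZ}: since $X_J$ is a regular element and $\Delta \subseteq \Phi_{H_\Delta}$, the subscheme of $\cb$ defined by the natural polynomials describing $\Hess(X_J,H_\Delta)$ is reduced. Restricting this subscheme to the open chart $U_w\dot wB \subseteq \cb$ produces precisely the affine scheme on $U_w$ cut out by the polynomials $\pi_\eta(\Ad(\mathbf{u}^{-1})(X_J))$ for $\eta \in w(\Phi^- \setminus \Delta^-)$. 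Hence $\ci'$ equals its own radical, which is $\ci_{w,J}$.

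Finally, $J$-admissibility of $w$ enters only to guarantee that $\cn_{w,J}$ is nonempty, and equivalently that $\ci_{w,J}$ is a proper ideal: evaluating the generators at $\mathbf{u} = e$ and invoking~\eqref{eqn.nonempty} recovers the admissibility criterion from Proposition~\ref{prop.affine.paving}. The main obstacle would have been the reducedness assertion, but this is exactly what~\cite{Abe-Fujita-Zeng2020} supplies in our setting, so the proof reduces to careful bookkeeping of root-space indices followed by citing their theorem.
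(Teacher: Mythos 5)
Your proposal is correct and follows essentially the same route as the paper's proof: translate the membership condition $\Ad(\dot w^{-1}u^{-1})(X_J)\in H_\Delta$ into vanishing of the root-space projections indexed by $w(\Phi^-\setminus\Delta^-)$, observe that the map $\mathbf{u}$ parametrizes $U_w$ by substituting scalars for the $z_\gamma$, and then appeal to the reducedness of the scheme defined by these polynomials, which is exactly \cite[Prop.~3.6 and Cor.~3.7]{Abe-Fujita-Zeng2020}. Your extra remark explaining that $J$-admissibility ensures the patch is nonempty is a harmless addition not emphasized in the paper's write-up.
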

\begin{proof} Every element of $U_w$ can be obtained from $\mathbf{u}$ by substituting some $c_\gamma\in \C$ for each $z_\gamma \in \mathbf{z}$ (cf.~\cite[Prop.~28.1]{Humphreys-LAG}). For each $u\in U_w$ we have 
\[
\Ad(\dot w^{-1} {u}^{-1})(X_J ) \in H_\Delta \Leftrightarrow \Ad({u}^{-1})(X_J ) \in \Ad(\dot w)(H_\Delta),
\]
so it is clear that $\pi_\beta(\Ad(\mathbf{u}^{-1})(X_J))$ for $\beta\in w(\Phi^- \setminus \Delta^-)$ vanishes on $u\in U_w$ if and only if $u\dot w \in \cn_{w,J}$.  Thus the polynomials on the RHS of~\eqref{eqn.patch.ideal} define $\cn_{w,J}$ set-theoretically, and we have only to show that they generated a reduced ideal, but this is proved in~\cite[Prop.~3.6 and Cor.~3.7]{Abe-Fujita-Zeng2020}.
\end{proof}

\begin{example}[Type $\textup{C}_2$] \label{ex.C2.1} Let $G=Sp_4(\C)$ be the symplectic subgroup of $4\times 4$ skew-symmetric matrices in $SL_4(\C)$ with Lie algebra $\fg=\mathfrak{sp}_4(\C)$.  We adopt the notational conventions of~\cite[Ch.~6]{Lakshmibai-Raghavan} and fix $B$ to be the Borel subgroup obtained by intersecting $G$ with the Borel subgroup of upper triangular matrices in $SL_4(\C)$.  Suppose $J= \{\alpha_1\}$ and 
\[
X_J = \begin{bmatrix} 1 & 1 & 0 & 0 \\ 0 & 1 & 0 & 0\\ 0 & 0 & -1 & -1 \\ 0 & 0 & 0 & -1 \end{bmatrix}\in \fg.
\]
Consider the $J$-admissible permutation $w=s_2s_1$.  Now 
\begin{eqnarray}\label{eqn.C2.1}
w(\Phi^-) = \{ \alpha_2, \alpha_1+\alpha_2, -\alpha_1, -2\alpha_1-\alpha_2  \},
\end{eqnarray}
and we fix
\begin{eqnarray*}
\mathbf{u}&:=& u_{\alpha_2}(z_{\alpha_2})u_{\alpha_1+\alpha_2}(z_{\alpha_1+\alpha_2})u_{-\alpha_1}(z_{-\alpha_1})u_{-2\alpha_1-\alpha_2}(z_{-2\alpha_1-\alpha_2})\\
&=&\begin{bmatrix} 1 & 0 & 0 & 0\\ 0 & 1 & z_{23} & 0 \\ 0 & 0 & 1 & 0\\ 0 & 0 & 0 & 1 \end{bmatrix}
\begin{bmatrix} 1 & 0 & z_{13} & 0\\ 0 & 1 & 0 & z_{13} \\ 0 & 0 & 1 & 0\\ 0 & 0 & 0 & 1 \end{bmatrix}
\begin{bmatrix} 1 & 0 & 0 & 0\\ z_{21} & 1 & 0 & 0 \\ 0 & 0 & 1 & 0\\ 0 & 0 & -z_{21} & 1 \end{bmatrix}
\begin{bmatrix} 1 & 0 & 0 & 0\\ 0 & 1 & 0 & 0 \\ 0 & 0 & 1 & 0\\ z_{41} & 0 & 0 & 1 \end{bmatrix},
\end{eqnarray*}
where we take $z_{23} = z_{\alpha_2}$, $z_{13}=z_{\alpha_1+\alpha_2}$, $z_{21}=z_{-\alpha_1}$, and $z_{41}=z_{-2\alpha_1-\alpha_2}$. Under our identification of $G$ with a matrix group, the adjoint action is simply conjugation. As
\[
w(\Phi^- \setminus \Delta^-) = w(\{ -\alpha_1-\alpha_2, -2\alpha_1 -\alpha_2 \}) = \{-\alpha_1, \alpha_2\},
\] 
the generators of $\ci_{w,J}$ are $\pi_{-\alpha_1}(\mathbf{u}^{-1}X_J \mathbf{u})$ and $\pi_{\alpha_2}(\mathbf{u}^{-1}X_J \mathbf{u})$, and the interested reader can confirm that
\begin{eqnarray}\label{eqn.genC1}
\pi_{\alpha_2}(\mathbf{u}^{-1}X_J\mathbf{u}) = 2z_{13}z_{21}^2 - 4z_{13}z_{21} - 2z_{21}z_{23} +2z_{23}
\end{eqnarray}
and
\begin{eqnarray}\label{eqn.genC2}
\pi_{-\alpha_1}(\mathbf{u}^{-1}X_J \mathbf{u}) =- 2z_{13}z_{21}z_{41} - 2z_{13}z_{41} - z_{23}z_{41}  -z_{21}^2 .
\end{eqnarray}
These equations are used in Example~\ref{ex.C2.2} to illustrate the technical results of Lemma~\ref{lemma.linear.term} below.
\end{example}

Suppose $w\in W$ is $J$-admissible, and recall that we identify the intersection $U_w\dot wB\cap \Hess(X_J)$ with the patch of $\Hess(X_J)$ at $\dot wB$ and also with the affine variety $\cv(\ci_{w,J}) \subseteq \C^{\N}$. These identifications take the permutation flag $\dot{w}B\in \Hess(X_J)$ to $\mathbf{0}\in \cv(\ci_{w,J})$.  Thus $\dot wB$ is a singular point of $\Hess(X_J)$ if and only if $\mathbf{0}$ is a singular point of the algebraic set defined by $\ci_{w,J}$.  Since the regular Hessenberg variety $\Hess(X_J)$ is a local complete intersection~\cite[Corollary 3.8]{Abe-Fujita-Zeng2020}, we take advantage of the following version of the Jacobian criterion.

\begin{lemma}[Jacobian Criterion for Local Complete Intersections] \label{lemma.Jacobian} Let $\mathbb{K}$ be an algebraically closed field.
If $$A = \mathbb{K}[x_1,\ldots,x_n]/\langle f_1,f_2,\ldots, f_m \rangle $$ is a local complete intersection, then the affine variety $\cx=\mathcal{V}(f_1, \ldots, f_m)$ is singular at the origin whenever 
 \begin{enumerate}
     \item any generator $f_i$  has no linear term, or more generally     
     \item if any nontrivial linear combination of the generators $\sum_{i=1}^m a_i f_i $ has no linear term.
 \end{enumerate}
Alternatively, if the Jacobian matrix  $$\mathsf{J} = \left ( \frac{\partial f_i }{ \partial x_j} \mod \langle x_1, \ldots, x_n\rangle  \right ) $$ has rank $m = \codim \cx$, then the affine variety $\cx$ is smooth at the origin.
\end{lemma}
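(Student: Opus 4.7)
The plan is to deduce both statements from the standard identification of the Zariski tangent space at the origin with the kernel of the Jacobian matrix evaluated there. Writing each $f_i$ as a power series $f_i = L_i(x_1,\dots,x_n) + (\textup{higher order terms})$, where $L_i$ is the linear part, the $i$th row of $\mathsf{J}$ consists precisely of the coefficients of $L_i$. Since $f_i(\mathbf{0})=0$, expanding modulo the square of the maximal ideal $\mathfrak{m}=\langle x_1,\ldots,x_n\rangle$ shows that the tangent space $T_{\mathbf{0}}\cx$ equals the kernel of $\mathsf{J}$, and therefore $\dim_{\mathbb{K}} T_{\mathbf{0}}\cx = n-\rank(\mathsf{J})$.

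Next I would invoke the local complete intersection hypothesis, which by definition means that every irreducible component of $\cx$ through $\mathbf{0}$ has local dimension exactly $n-m$ (equivalently, the defining ideal is generated by a regular sequence of length equal to its height). Combined with the general fact that $\cx$ is smooth at $\mathbf{0}$ if and only if $\dim_{\mathbb{K}}T_{\mathbf{0}}\cx = \dim_{\mathbf{0}}\cx$, this immediately gives the equivalence
\[
\cx \text{ is smooth at } \mathbf{0} \iff \rank(\mathsf{J}) = m,
\]
which is the "alternatively" statement.

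For the first two conditions, I would argue contrapositively: if a nontrivial linear combination $\sum_{i=1}^{m} a_i f_i$ has no linear term, then $\sum_{i=1}^{m} a_i L_i = 0$, so the same combination of rows of $\mathsf{J}$ is zero. This yields a nontrivial linear dependence among the rows of $\mathsf{J}$, forcing $\rank(\mathsf{J}) < m$, and hence by the preceding paragraph $\cx$ is singular at $\mathbf{0}$. Condition~(1) is the special case where $a_i=1$ and all other $a_j=0$, so it follows from (2).

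There is no real obstacle here: the statement is a reformulation of well-known facts about regular local rings and complete intersections, and the only subtlety worth flagging is that the "local complete intersection" hypothesis is what allows us to pin down $\dim_{\mathbf{0}}\cx = n-m$ without further argument, so that the rank deficiency of $\mathsf{J}$ translates directly into an inequality $\dim_{\mathbb{K}} T_{\mathbf{0}}\cx > \dim_{\mathbf{0}}\cx$. A precise reference (e.g.\ to Eisenbud or Hartshorne) would suffice in place of a detailed derivation.
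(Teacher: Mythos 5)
The paper states this lemma without proof, treating it as a standard fact about local complete intersections. Your argument is correct and is exactly the standard derivation that the authors implicitly invoke: identify $T_{\mathbf 0}\cx$ with $\ker\mathsf J$, use the complete-intersection hypothesis to pin down $\dim_{\mathbf 0}\cx = n-m$, and translate regularity of the local ring into the rank condition $\rank\mathsf J = m$, from which (2), and then (1) as a special case, follow by contraposition.
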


To apply Lemma~\ref{lemma.Jacobian} to the patch ideals, we need to identify the linear terms of each generator.  The following technical lemmas allow us to keep track of this data.

\begin{lemma}\label{lemma.adjoint} Suppose $w\in W$. Let $\mathbf{u}$ be a generic element of $\C[\mathbf{z}] \otimes_\C U_w$ as in~\eqref{eqn.def.u}.  For all $\eta, \beta\in \Phi$,
\[
\pi_\eta (\Ad(\mathbf{u}^{-1})(E_{\beta})) = \delta_{\eta,\beta} - \sum_{\substack{\gamma\in w(\Phi^-)\\ \eta = \gamma+\beta}} c_{\gamma, \beta}z_{\gamma} + O(2, \mathbf{z})
\]
where $\delta_{\eta,\beta}=1$ if $\eta=\beta$ and $\delta_{\eta,\beta}=0$ otherwise.
\end{lemma}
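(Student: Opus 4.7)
The plan is to expand $\Ad(\mathbf{u}^{-1})$ as an ordered product of adjoint actions of the individual root subgroup elements and keep track of terms only up to linear order in the variables $\mathbf{z}$.

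First, inverting the defining product~\eqref{eqn.def.u} gives
\[
\mathbf{u}^{-1} = u_{\gamma_{\N}}(-z_{\gamma_{\N}}) \cdots u_{\gamma_2}(-z_{\gamma_2}) u_{\gamma_1}(-z_{\gamma_1}),
\]
so that
\[
\Ad(\mathbf{u}^{-1}) = \Ad(u_{\gamma_{\N}}(-z_{\gamma_{\N}})) \cdots \Ad(u_{\gamma_1}(-z_{\gamma_1})).
\]
Using the standard identity $\Ad(\exp(X)) = \exp(\ad X)$ for $X\in \fg$, each factor expands as
\[
\Ad(u_{\gamma_i}(-z_{\gamma_i})) = \exp(-z_{\gamma_i}\,\ad(E_{\gamma_i})) = \mathrm{id} - z_{\gamma_i}\,\ad(E_{\gamma_i}) + O(z_{\gamma_i}^2).
\]
When the $\N$ factors are composed and applied to $E_\beta$, every cross-term involving two or more of the factors contributes at least two powers of variables from $\mathbf{z}$; hence all such contributions lie in $O(2,\mathbf{z})$. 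Collecting the remaining terms yields
\[
\Ad(\mathbf{u}^{-1})(E_\beta) = E_\beta - \sum_{\gamma\in w(\Phi^-)} z_\gamma\,[E_\gamma, E_\beta] + O(2,\mathbf{z}).
\]

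Finally, the structure constants satisfy $[E_\gamma, E_\beta] = c_{\gamma,\beta}E_{\gamma+\beta}$, with $c_{\gamma,\beta}=0$ unless $\gamma+\beta\in \Phi$. Applying the projection $\pi_\eta$ to the above expression picks out the $\delta_{\eta,\beta}$ contribution from $E_\beta$, together with the sum over those $\gamma\in w(\Phi^-)$ such that $\gamma+\beta=\eta$. This gives exactly the claimed identity. The argument is essentially a direct computation; the only care required is to confirm that products of two or more distinct linear terms collapse into $O(2,\mathbf{z})$, which is immediate from the expansion above.
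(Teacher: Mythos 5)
Your approach is the same as the paper's: expand $\Ad(\mathbf{u}^{-1})$ as an ordered product of $\exp(-z_{\gamma_i}\ad E_{\gamma_i})$, retain only the linear terms in $\mathbf{z}$, and observe that every cross-term has degree at least two. The logic and the conclusion are correct.

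One small point worth flagging: your sentence ``the structure constants satisfy $[E_\gamma, E_\beta] = c_{\gamma,\beta}E_{\gamma+\beta}$, with $c_{\gamma,\beta}=0$ unless $\gamma+\beta\in \Phi$'' is not quite right when $\gamma=-\beta$. In that case $\gamma+\beta=0\notin\Phi$, yet $[E_\gamma,E_{-\gamma}]$ is a nonzero element of the Cartan $\ft$. The paper's proof treats this case separately. Your final conclusion is unharmed because $\eta\in\Phi$ so $\pi_\eta$ kills the Cartan contribution, but the intermediate identity as you phrased it is false for $\gamma=-\beta$; it would be cleaner to say that the Cartan piece projects to zero under $\pi_\eta$.
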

\begin{proof} Using properties of the adjoint action, for all $\gamma\in w(\Phi^-)$ we have
\begin{eqnarray*}
\Ad(u_\gamma(z_{\gamma})^{-1})(E_{\beta}) &=&  \exp(\ad_{-z_\gamma E_\gamma})(E_{\beta}) =  \sum_{i=0}^{\infty} \frac{1}{i!} \ad_{-z_\gamma E_\gamma}^i(E_\beta)\\
&=& E_{\beta} + [-z_\gamma E_\gamma, E_{\beta}] +\sum_{i\geq 2} \frac{1}{i!} \ad^i_{-z_\gamma E_\gamma}(E_{\beta}).
\end{eqnarray*}
If $\beta \neq -\gamma$ then 
\begin{eqnarray}\label{eqn.nilpotent.part}
[-z_\gamma E_{\gamma}, E_{\beta}] = -  c_{\gamma, \beta} z_{\gamma} E_{\gamma+\beta}, 
\end{eqnarray}
where $c_{\gamma,\beta}=0$ whenever $\gamma+\beta\notin \Phi$.  Otherwise, $\beta = -\gamma$, and we get 
\begin{eqnarray*}
[-z_{\gamma} E_{\gamma}, E_{\beta}] = -z_{\gamma} [E_{\gamma}, E_{-\gamma}] \neq 0 
\end{eqnarray*}
is a nonzero element of $\C[\mathbf{z}] \otimes_\C \ft.$
In particular, the only non-constant $\mathbf{z}$-linear terms appearing in $\Ad(u_\gamma(z_{\gamma})^{-1} )(E_{\beta})$ are contributed by the term $[-z_{\gamma}E_{\gamma}, E_{\beta}]$ from~\eqref{eqn.nilpotent.part}.  If $\eta = \gamma+\beta$ then~\eqref{eqn.nilpotent.part} shows that the linear term of $\pi_{\eta}(\Ad(u_\gamma(z_{\gamma})^{-1})( E_{\beta}) )$ is $-c_{\gamma, \beta}z_{\gamma}$, and if $\eta=\beta$ then $\pi_{\eta}(\Ad(u_\gamma(z_{\gamma})^{-1})( E_{\beta}) )=1$

If we apply $\Ad(u_\alpha(z_\alpha)^{-1})$ to $\Ad(u_\gamma(z_{\gamma})^{-1})(E_{\beta})$ for some $\alpha\neq \gamma$, the final result has all the $\mathbf{z}$-linear terms appearing in the formula for $\Ad(u_\gamma(z_{\gamma})^{-1})( E_{\beta})$ above, and we also gain $\mathbf{z}$-linear terms from $[-z_{\alpha}E_{\alpha}, E_{\beta}]$.  All other summands will appear with polynomial coefficients that are monomials in the $\mathbf{z}$'s of degree at least two.
Iterating over all $\gamma\in w(\Phi^-)$ now yields the desired formula.
\end{proof}

\begin{lemma}  \label{lemma.linear.term} Suppose $w\in W$ is $J$-admissible, and let $\mathbf{u}$ be a generic element of $\C[\mathbf{z}] \otimes_\C U_w$ as in~\eqref{eqn.def.u}.   For all $\eta \in \Phi\setminus J$,
\[
\pi_\eta(\Ad(\mathbf{u}^{-1})(X_J)) = \eta(S_J)z_\eta -\sum_{\substack{\gamma\in w(\Phi^-),\, \alpha\in J\\ \eta=\gamma+\alpha}} c_{\gamma,\alpha}z_\gamma + O(2, \mathbf{z}).
\]
\end{lemma}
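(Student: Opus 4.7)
\smallskip
\noindent\textbf{Proof plan.} The strategy is to decompose $X_J = S_J + N_J$, handle the nilpotent part using Lemma~\ref{lemma.adjoint} directly, and handle the semisimple part by running the same Taylor-expansion argument used in the proof of Lemma~\ref{lemma.adjoint}. First I would write
\[
\Ad(\mathbf{u}^{-1})(X_J) = \Ad(\mathbf{u}^{-1})(S_J) + \sum_{\alpha\in J} \Ad(\mathbf{u}^{-1})(E_\alpha),
\]
and then apply $\pi_\eta$ to each summand.

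For the nilpotent summands, Lemma~\ref{lemma.adjoint} gives
\[
\pi_\eta\bigl(\Ad(\mathbf{u}^{-1})(E_\alpha)\bigr) = \delta_{\eta,\alpha} - \sum_{\substack{\gamma\in w(\Phi^-)\\ \eta=\gamma+\alpha}} c_{\gamma,\alpha} z_\gamma + O(2,\mathbf{z}).
\]
Because the hypothesis $\eta\in\Phi\setminus J$ and $\alpha\in J$ force $\delta_{\eta,\alpha}=0$, summing over $\alpha\in J$ contributes exactly the double sum in the statement.

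For the semisimple summand I would repeat the iteration from Lemma~\ref{lemma.adjoint}'s proof, substituting $S_J$ for $E_\beta$. Since $S_J\in\ft$, the key commutator is $[-z_\gamma E_\gamma, S_J] = \gamma(S_J) z_\gamma E_\gamma$, rather than producing a root vector $E_{\gamma+\beta}$. Iterating over the factors $u_\gamma(z_\gamma)^{-1}$ in $\mathbf{u}^{-1}$ and tracking only $\mathbf{z}$-linear contributions (cross terms between distinct factors are automatically quadratic or higher) yields
\[
\Ad(\mathbf{u}^{-1})(S_J) = S_J + \sum_{\gamma\in w(\Phi^-)} \gamma(S_J)\, z_\gamma\, E_\gamma + O(2,\mathbf{z}).
\]
Applying $\pi_\eta$ to this expression, and using the convention that $z_\eta=0$ if $\eta\notin w(\Phi^-)$, gives $\eta(S_J) z_\eta + O(2,\mathbf{z})$; the constant term $\pi_\eta(S_J)$ vanishes because $\eta\in\Phi$ is nonzero and $S_J\in\ft$.

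Adding the two contributions produces the claimed formula. The only real obstacle is the bookkeeping of linear terms in the semisimple computation, and in particular verifying that no cancellations or extra linear terms arise when the factors $u_\gamma(z_\gamma)^{-1}$ are composed in the specified order. This follows from the observation that any product of two or more distinct factors acting on $S_J$ contributes a monomial of degree at least two in $\mathbf{z}$, so the linear part of $\Ad(\mathbf{u}^{-1})(S_J)$ is simply the sum of the linear parts of the individual $\Ad(u_\gamma(z_\gamma)^{-1})(S_J)$, independent of the chosen ordering of the roots in $w(\Phi^-)$.
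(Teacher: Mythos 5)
Your proof is correct and follows essentially the same route as the paper: decompose $X_J=S_J+N_J$, apply Lemma~\ref{lemma.adjoint} termwise to the nilpotent part (with $\delta_{\eta,\alpha}=0$ because $\eta\notin J$), and rerun the exponential expansion for the torus element $S_J$ to extract the linear term $\eta(S_J)z_\eta$. The only detail the paper makes slightly more explicit is that $\Ad(u_\gamma(z_\gamma)^{-1})(S_J)=S_J+\gamma(S_J)z_\gamma E_\gamma$ holds exactly, not just modulo $O(2,\mathbf{z})$, since $\ad^i_{-z_\gamma E_\gamma}(S_J)=0$ for $i\geq 2$ because $2\gamma\notin\Phi$; your observation that cross terms from distinct factors are quadratic captures the same point.
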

\begin{proof} To begin, we have $\ad_{-z_\gamma E_\gamma}(S_J) =  \gamma(S_J)z_\gamma E_\gamma$.  Therefore, for all $i\geq 2$, we have $\ad^i_{-z_{\gamma}E_{\gamma}}(S_J) =  \gamma(S_J)z_\gamma \ad^{i-1}_{-z_{\gamma}E_{\gamma}}(E_{\gamma})=0$ as $2\gamma\notin \Phi$.  Thus
\begin{eqnarray*}
\Ad(u_\gamma(z_{\gamma})^{-1})(S_J) =S_J +  \gamma(S_J) z_\gamma E_{\gamma} 
\end{eqnarray*}
for all $\gamma\in \Phi$.  Iterating over all $\gamma\in w(\Phi^-)$ we get 
\begin{eqnarray}\label{eqn.semisimple.part}
\pi_{\eta}(\Ad(\mathbf{u}^{-1})(S_J)) = \eta(S_J)z_{\eta}+O(2, \mathbf{z}).
\end{eqnarray}
Now 
\[
\pi_\eta(\Ad(\mathbf{u}^{-1})(X_J)) = \pi_{\eta}(\Ad(\mathbf{u}^{-1})(S_J)) + \sum_{\alpha\in J}  \pi_\eta(\Ad(\mathbf{u}^{-1})(E_{\alpha})).
\]
Since $\eta \notin J$ we have $\delta_{\eta,\alpha}=0$ for all $\alpha\in J$, and the desired result now follows from~\eqref{eqn.semisimple.part} and Lemma~\ref{lemma.adjoint}.
\end{proof}

\begin{example}[Type $\textup{C}_2$] \label{ex.C2.2} Consider $G=Sp_4(\C)$, $\fg=\mathfrak{sp}_4(\C)$, and $J=\{\alpha_1\}$ as in Example~\ref{ex.C2.1} above.   The generator $\pi_{\alpha_2}(\mathbf{u}^{-1}X_J \mathbf{u})$ appearing in~\eqref{eqn.genC1} has linear term equal to $2z_{23} = 2z_{\alpha_2}$. This aligns with the results of Lemma~\ref{lemma.linear.term} above.  Indeed, $\alpha_2(S_J)=2$ in this case, so we expect the term $2z_{\alpha_2}$ to appear. Furthermore, we note that $\alpha_2$ is not equal to $\gamma+\alpha_1$ for any $\gamma\in w(\Phi^-)$ (cf.~\eqref{eqn.C2.1}), confirming that $2z_{\alpha_2}$ is the only linear term we expect to see. 

Similar reasoning can be used to check, via Lemma~\ref{lemma.linear.term}, that the generator \break $\pi_{-\alpha_1}(\mathbf{u}^{-1}X_J \mathbf{u})$ has no linear terms, as computed in~\eqref{eqn.genC2}. 
\end{example}

\begin{example}[Type $\textup{A}_3$] \label{ex.A3.coordinates} Let $G=GL_4(\C)$, and consider the regular element 
\[
X_{(3,1)} = \begin{bmatrix} 1 & 1 & 0 & 0\\ 0 & 1 & 1 & 0\\ 0 & 0 & 1 & 0\\ 0 & 0 & 0 & -1 \end{bmatrix},
\] 
so $J = J_{(3,1)} = \{\alpha_1, \alpha_2\}$ in this case.  Set $w=s_2=1324$, so  
\begin{eqnarray}\label{eqn.ordering}
w(\Phi^-) = \{ \alpha_2, -\alpha_1, -\alpha_1-\alpha_2, -\alpha_3, -\alpha_2-\alpha_3, -\alpha_1-\alpha_2-\alpha_3 \},
\end{eqnarray}
and we choose coordinates 
\[
\mathbf{u}:= u_{\alpha_2}(z_{\alpha_2}) u_{-\alpha_1}(z_{-\alpha_1}) u_{-\alpha_1-\alpha_2}(z_{-\alpha_1-\alpha_2})u_{-\alpha_3}(z_{-\alpha_3})u_{-\alpha_2-\alpha_3}(z_{-\alpha_2-\alpha_3}) u_{-\theta}(z_{-\theta})
\]
using the order in which the elements of $w(\Phi^-)$ are listed in~\eqref{eqn.ordering} above. If $\gamma= \epsilon_i - \epsilon_j$ then $u_{\gamma}(z_\gamma):= I_n + z_{\gamma}E_{ij}$. In this example, the adjoint action is given by conjugation.
The generators of $\ci_{s_2, J}$ are indexed by the set $s_2(\Phi^-\setminus \Delta^-) = \{ -\alpha_1, -\alpha_3, -\alpha_1-\alpha_2-\alpha_3 \}$ and are given by
\begin{eqnarray*}
\pi_{-\alpha_1} (\mathbf{u}^{-1}X_J \mathbf{u}) &=& z_{-\alpha_1-\alpha_2} + O(2,\mathbf{z})\\
\pi_{-\alpha_3} (\mathbf{u}^{-1}X_J \mathbf{u}) &=& -2z_{-\alpha_3} -z_{-\alpha_2-\alpha_3} + O(2,\mathbf{z})\\
\pi_{-\alpha_1-\alpha_2-\alpha_3} (\mathbf{u}^{-1}X_J \mathbf{u}) &=& -2z_{-\alpha_1-\alpha_2-\alpha_3} + O(2,\mathbf{z}).
\end{eqnarray*}
This aligns with the results of Lemma~\ref{lemma.linear.term} above.  Indeed, consider $\pi_{-\alpha_3}(\mathbf{u}^{-1}X_J \mathbf{u})$. We note first that the linear term $-2z_{-\alpha_3}$ appears as $-\alpha_3(S_J)= \epsilon_4(S_J)-\epsilon_3(S_J)=-2$. We also have $-z_{-\alpha_2-\alpha_3}$ occuring since $-\alpha_3 = (-\alpha_2-\alpha_3) + \alpha_2$, where $\alpha_2\in J$ and $-\alpha_2-\alpha_3\in w(\Phi^-)$.
\end{example}

In Section~\ref{sec.Peterson} below we will focus our attention on the Peterson variety $\Pet_\Delta$ and will need to consider patches and patch ideals centered at other points of the variety, not just at Weyl flags $\dot wB$ for $w\in W$.  Whenever we are working with Peterson varieties, we suppress the subset $J$ in our notation for patches as it is always fixed to be the set of simple roots.  For example, we write $\cn_{w}$ for the patch of $\Pet_\Delta$ at $\dot wB$ rather than~$\cn_{w,\Delta}$.  

Let $w\in W$. Each element of the Schubert cell $C_{w}$ is of the form $u_1\dot w B$ for some \textcolor{red}{$u_1\in U\cap \dot w U_- \dot w^{-1}$}.  Note
\[
u_1 U_w:= \{ u_1 u\mid u\in U_w \} = U_w \simeq \C^{\N},
\]
but we will need coordinates on $u_1U_W=U_w$ centered at $u_1\dot wB$.
For $u_1\dot wB\in \Pet_\Delta$ we consider the (shifted) patch of $\Pet_\Delta$,
\[
\cn_{u_1w}:= \{ u_1u\in u_1U_w \mid   \Ad( \dot w u^{-1}u_1^{-1}) (N)   \in H_\Delta \}.
\]

As above, we identify $\cn_{u_1w}$ with the affine variety $\cv(\ci_{u_1w}) \subseteq \C^{\N}$ where $\ci_{u_1w}$ is the patch ideal, which is the reduced ideal defining the patch as an affine subvariety of $u_1U_w\simeq \C^{\N}$. Once again, identifying generators of the ideal $\ci_{u_1w}$ is straightforward.

\begin{lemma}\label{lemma.generators2} Suppose $w$ is admissible and let $\mathbf{u}$ be a generic element of $\C[\mathbf{z}] \otimes_\C U_{w}$ defined as in~\eqref{eqn.def.u}. For each point $u_1\dot w B$ of the Hessenberg--Schubert cell $C_{w}\cap \Pet_\Delta$, the patch ideal is
\[
\ci_{u_1w} = \langle \pi_\eta( \Ad(\mathbf{u}^{-1}u_1^{-1})(N)) \mid \eta\in w\left( \Phi^-\setminus \Delta^- \right) \rangle.
\]
\end{lemma}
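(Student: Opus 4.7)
The plan is to mirror the proof of Lemma~\ref{lemma.generators}, shifted from the $T$-fixed point $\dot wB$ to the point $u_1\dot wB$. First I would parameterize: every element of $u_1 U_w\dot wB$ has the form $u_1 u \dot wB$ where $u\in U_w$ is obtained from $\mathbf{u}$ by specializing each $z_\gamma$ to a scalar $c_\gamma\in\C$. The point $u_1 u \dot w B$ lies in $\Pet_\Delta$ if and only if $\Ad((u_1 u \dot w)^{-1})(N)\in H_\Delta$; pushing $\Ad(\dot w^{-1})$ across, this is equivalent to
\[
\Ad(u^{-1}u_1^{-1})(N)\in\Ad(\dot w)(H_\Delta).
\]
Since the vector-space complement of $\Ad(\dot w)(H_\Delta)$ in $\fg$ is $\bigoplus_{\eta\in w(\Phi^-\setminus\Delta^-)}\fg_\eta$, this membership condition is equivalent to the simultaneous vanishing of the polynomials $\pi_\eta(\Ad(\mathbf{u}^{-1}u_1^{-1})(N))$ for $\eta\in w(\Phi^-\setminus\Delta^-)$. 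Hence these polynomials cut out $\cn_{u_1w}$ set-theoretically.

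It then remains to verify that they generate the radical ideal $\ci_{u_1w}$, not merely an ideal with the correct zero locus. The key input is~\cite[Corollary~3.8]{Abe-Fujita-Zeng2020}: $\Pet_\Delta$ is a reduced local complete intersection in $\cb$ of codimension $|\Phi^+|-|\Delta|$. Since $|w(\Phi^-\setminus\Delta^-)|=|\Phi^+|-|\Delta|$, the number of listed generators equals the codimension, so they exhibit the patch as a complete intersection of the expected codimension inside the affine chart $u_1 U_w\dot wB\simeq \C^{\N}$. Combined with the global reducedness of $\Pet_\Delta$, this forces the ideal they generate to equal $\ci_{u_1w}$.

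The main obstacle is this reducedness step, since the analogous step in Lemma~\ref{lemma.generators} invokes~\cite[Prop.~3.6 and Cor.~3.7]{Abe-Fujita-Zeng2020}, stated there at the $T$-fixed points $\dot wB$. My plan is to observe that the local complete intersection argument in~\cite{Abe-Fujita-Zeng2020} is entirely local and carries over verbatim to the point $u_1\dot wB\in\Pet_\Delta$; alternatively, a complete intersection of the expected codimension in a smooth ambient whose underlying reduced subscheme is a reduced variety is itself reduced (generically reduced plus Cohen--Macaulay implies reduced), and the global reducedness of $\Pet_\Delta$ supplies exactly the hypothesis required at $u_1\dot wB$.
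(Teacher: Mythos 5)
The paper's own proof is much shorter and takes a different route: since left-translation by the fixed element $u_1$ is a polynomial automorphism of $U_w\simeq\C^{\N}$, the displayed generators are exactly the images of the generators of $\ci_w$ from Lemma~\ref{lemma.generators} under the induced ring automorphism of $\C[\mathbf{z}]$, and $\cn_{u_1 w}=\cn_w$ as subsets of $U_w$; reducedness of $\ci_w$ therefore transfers to $\ci_{u_1 w}$ instantly, with no need to revisit the Abe--Fujita--Zeng input at a non-$T$-fixed point. You miss this observation and instead try to re-establish reducedness of the scheme cut out by the generators directly at $u_1\dot wB$.

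The commutative-algebra fallback you spell out has a genuine gap. The assertion ``a complete intersection of the expected codimension in a smooth ambient whose underlying reduced subscheme is a reduced variety is itself reduced'' is false: $\spec\C[x]/(x^2)$ in $\A^1$ is a codimension-one complete intersection whose reduction is the reduced point, yet it is non-reduced. The true implication ``generically reduced plus Cohen--Macaulay implies reduced'' does not close the gap, because global reducedness of $\Pet_\Delta$ as a variety does not give generic reducedness of the scheme $V\bigl(\langle\pi_\eta(\Ad(\mathbf{u}^{-1}u_1^{-1})(N))\rangle\bigr)$: that these particular polynomials cut out $\cn_{u_1w}$ with multiplicity one at the generic point is precisely what must be proved, and matching the number of equations to the codimension does not supply it. Your other alternative, that the local complete intersection argument of \cite{Abe-Fujita-Zeng2020} ``carries over verbatim'' to the chart centered at $u_1\dot wB$, is plausible if one reads their results as a coordinate-free statement about the natural scheme structure on $\Hess(X,H)$, but as written it is an assertion rather than an argument; the change-of-coordinates observation used in the paper makes it unnecessary.
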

\begin{proof} As in the proof of Lemma~\ref{lemma.generators}, it is obvious that the ideal $\ci$ in the statement of the lemma defines $\cn_{u_1w}$ set-theoretically and we have only to show it is reduced.  However, $\cn_{u_1w}=\cn_w$, $\ci_{u1_w}$ is just $\ci_{w}$ with a change of coordinates, and $\ci_{w}$ is reduced by~\cite[Prop.~3.6 and Cor.~3.7]{Abe-Fujita-Zeng2020}.
\end{proof}

%%%%%%%%%%%%%%%%%%%%%%
%%%%%%%%%%%%%%%%%%%%%%

\section{Singular points $\dot w B$ in $\Hess(X_J)$} \label{sec:singular_loci}
 
Suppose $w\in W$ is $J$-admissible, and let $w=y_Kv$ be the factorization of $w$ from Corollary~\ref{cor.cells}.  The main theorem of this section reduces the question of whether the Weyl flag $\dot wB$ is in the singular locus of $\Hess(X_J)$ to the question of whether $\dot y_K B_J$ is in the singular locus of the Peterson variety $\Pet_J$.

\begin{theorem}\label{thm.singularTpts} Let $J\subseteq \Delta$, and suppose $w\in W$ is $J$-admissible. Let $w=y_Kv$ be the reduced factorization of $w$ from Corollary~\ref{cor.cells} with $v\in {^JW}$ and $y_K\in W_J$ for some  $K\subseteq \Delta(v)$.
\begin{enumerate}
\item If $\Delta(v) = J$ then $\dot wB$ is a singular point of $\Hess(X_J)$ if and only if $\dot y_KB_J$ is a singular point of $\Pet_J$, the Peterson variety in the flag variety $\cb_J:=L_J/B_J$.
\item If $\Delta(v) \subsetneq J$ then $\dot wB$ is a singular point of $\Hess(X_J)$.
\end{enumerate}
\end{theorem}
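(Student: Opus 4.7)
The plan is to apply the Jacobian criterion (Lemma~\ref{lemma.Jacobian}) to the patch ideal $\ci_{w,J}$, using the generators and their linear terms described in Lemmas~\ref{lemma.generators} and~\ref{lemma.linear.term}. The key observation is that in the linear part of $\pi_\eta(\Ad(\mathbf{u}^{-1})(X_J))$, the coefficient of $z_\gamma$ can be nonzero only when $\gamma = \eta$ or $\eta - \gamma \in J$; in either case $\gamma \in \Phi_J$ iff $\eta \in \Phi_J$. This yields a block-diagonal decomposition
\[
M = \begin{pmatrix} A & 0 \\ 0 & B \end{pmatrix}
\]
of the Jacobian at the origin, whose rows are partitioned according to whether $\eta \notin \Phi_J$ (block $A$) or $\eta \in \Phi_J$ (block $B$).

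I would first argue that $A$ always has full row rank. Restrict $A$ to its square submatrix with columns $\{z_\eta\}$ matching its rows. The diagonal entries are $\eta(S_J) \neq 0$ (since $\eta \notin \Phi_J$), while an off-diagonal entry at $(\eta,\eta')$ is nonzero only when $\eta-\eta' \in J$, which forces $\hgt(\eta) > \hgt(\eta')$. Ordering rows and columns by increasing height makes this submatrix lower triangular with nonzero diagonal, hence invertible. Consequently, smoothness of $\Hess(X_J)$ at $\dot wB$ is controlled entirely by the rank of $B$.

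To analyze $B$, the identities $w(\Phi^-) \cap \Phi_J = y_K(\Phi_J^-)$ and $w(\Delta^-) \cap \Phi_J = y_K(\Delta(v)^-)$ (both easy consequences of $v \in {}^J W$ and the definition of $\Delta(v)$) describe the row and column index sets. In Case~(1) ($\Delta(v) = J$) the row set of $B$ becomes $y_K(\Phi_J^- \setminus J^-)$, and Lemma~\ref{lemma.linear.term} applied inside $\fl_J$ (where the semisimple part of $N_J$ vanishes) identifies $B$ entry-for-entry with the Jacobian of the patch of $\Pet_J$ at $\dot y_K B_J$. Hence $\dot w B$ is smooth in $\Hess(X_J)$ if and only if $\dot y_K B_J$ is smooth in $\Pet_J$, proving part~(1).

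For Case~(2) ($\Delta(v) \subsetneq J$), the plan is to exhibit an explicit zero row of $B$. Pick $\alpha_0 \in J \setminus \Delta(v)$, let $J_i \subseteq J$ be the connected component of $J$ containing $\alpha_0$, and let $\theta_i$ denote the highest root of $\Phi_{J_i}$. Since $K \subseteq \Delta(v)$, we have $\alpha_0 \notin K$, so $J_i \not\subseteq K$ and $\theta_i \notin \Phi_K$; the standard fact that $y_K$ permutes $\Phi^+ \setminus \Phi_K^+$ then gives $y_K(\theta_i) \in \Phi_{J_i}^+$, so $\eta := -\theta_i \in y_K(\Phi_J^-)$. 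If $y_K(\theta_i)$ were a simple root $\beta \in \Delta(v)$, the relation $\theta_i = y_K(\beta) \in \Z\langle K \cap J_i, \beta\rangle$ combined with the fact that $\theta_i$ is supported on every simple root of $J_i$ would force $J_i = (K \cap J_i) \cup \{\beta\} \subseteq \Delta(v)$, contradicting $\alpha_0 \notin \Delta(v)$; hence $y_K(\theta_i) \notin \Delta(v)$, so $\eta \notin y_K(\Delta(v)^-)$ and $\eta$ is a valid row index of $B$. Finally, for any $\alpha' \in J$ the sum $\theta_i + \alpha'$ lies in the $\QQ$-span of $J$ but not in $\Phi_J$ (by maximality of $\theta_i$ if $\alpha' \in J_i$, and by disjointness of the irreducible components of $\Phi_J$ if $\alpha' \in J \setminus J_i$), and therefore is not a root of $\Phi$. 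By Lemma~\ref{lemma.linear.term} the row $\eta = -\theta_i$ of $B$ has zero linear part, so $B$ is rank-deficient and $\dot w B$ is singular. The main obstacle is the structural check that $y_K(\theta_i) \notin \Delta(v)$ in Case~(2); the rest reduces to routine applications of Lemma~\ref{lemma.linear.term}.
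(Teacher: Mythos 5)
Your proof is correct and takes essentially the same approach as the paper's: a block decomposition of the Jacobian based on whether the indexing root lies in $\Phi_J$, the identification of the $\Phi_J$-block with the Peterson Jacobian, and, for part~(2), the observation that the generator indexed by $-\theta_i$ (the negative of the highest root of the component of $\Phi_J$ containing some $\alpha_0 \in J \setminus \Delta(v)$) has no linear term. You make the minor strengthening that the Jacobian is fully block-diagonal — the linear part of $\pi_\eta(\Ad(\mathbf{u}^{-1})(X_J))$ involves $z_\gamma$ only when $\gamma = \eta$ or $\eta - \gamma \in J$, forcing $\gamma \in \Phi_J \iff \eta \in \Phi_J$ — whereas the paper only establishes block upper-triangularity ($\mathsf{C}=0$), which already suffices for the rank argument; and your verification that $y_K(\theta_i) \notin \Delta(v)$ is phrased via the support condition $J_i = (K\cap J_i)\cup\{\beta\}$ rather than the paper's direct observation that $y_K(\Delta(v)) \subseteq \Phi_{\Delta(v)}$ would force $\theta_i$ into the $\Z$-span of $\Delta(v)$, but the two are equivalent.
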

 
We begin with the following.  We retain the notation from Theorem~\ref{thm.singularTpts}; in particular, we assume $w$ is $J$-admissible and $w=y_K v$ for $K\subseteq \Delta(v)$ and $v\in {^J}W$.  
\begin{lemma} \label{lemma.singular2} If $\Delta(v)\subsetneq J$, then there exists $\eta\in \Phi_J^-$ such that $\eta$ is a minimal element of $\Phi_J$ \textup{(}meaning $\eta-\alpha \notin\Phi_J$ for all $\alpha\in J$\textup{)} and $w^{-1}(\eta)\in \Phi^- \setminus \Delta^-$.  
\end{lemma}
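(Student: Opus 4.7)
The plan is as follows.  Since $\Delta(v) \subsetneq J$, fix some $\alpha_0 \in J \setminus \Delta(v)$.  Because $K \subseteq \Delta(v)$, we have $\alpha_0 \notin K$.  Let $J = J_1 \sqcup \cdots \sqcup J_r$ be the decomposition of $J$ into the connected components of its Dynkin subdiagram, and let $J_{i_0}$ be the component containing $\alpha_0$; in particular $J_{i_0} \not\subseteq K$.  The candidate will be $\eta := -\theta_{J_{i_0}}$, the negation of the highest root of the irreducible subsystem $\Phi_{J_{i_0}}$.  Minimality of $\eta$ in $\Phi_J$ is routine: for $\alpha' \in J_{i_0}$ the sum $\theta_{J_{i_0}} + \alpha'$ is not a root, and for $\alpha'$ in some other component $J_j$ of $J$, $\eta - \alpha'$ has support on two distinct connected components of $J$ and hence is not in $\Phi_J$.

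Next I compute $w^{-1}(\eta) = v^{-1}(y_K(\theta_{J_{i_0}}))$, using $y_K^{-1} = y_K$.  The decomposition $y_K = \prod_j y_{K \cap J_j}$ has pairwise commuting factors, each acting trivially on roots supported in the other components, so $y_K(\theta_{J_{i_0}}) = y_{K \cap J_{i_0}}(\theta_{J_{i_0}})$.  Since $J_{i_0} \not\subseteq K$, the highest root $\theta_{J_{i_0}}$ lies in $\Phi_{J_{i_0}}^+ \setminus \Phi_{K \cap J_{i_0}}^+$, and $y_{K \cap J_{i_0}}$ permutes $\Phi^+ \setminus \Phi_{K \cap J_{i_0}}^+$, so $\beta := y_{K \cap J_{i_0}}(\theta_{J_{i_0}}) \in \Phi_{J_{i_0}}^+ \setminus \Phi_{K \cap J_{i_0}}^+ \subseteq \Phi_J^+$.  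Since $v \in {^J}W$, we have $v^{-1}(\beta) \in \Phi^+$, and therefore $w^{-1}(\eta) = -v^{-1}(\beta) \in \Phi^-$.

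The main remaining task, and the heart of the argument, is to verify $w^{-1}(\eta) \notin \Delta^-$, which is equivalent to $\beta \notin \Delta(v) = v(\Delta) \cap \Phi_J^+$.  If $\beta$ is not a simple root in $\Delta$, then $\beta \notin J \supseteq \Delta(v)$ by Lemma~\ref{lemma.deltav}, and we are done.  The main obstacle is the case where $\beta$ is simple, which forces $\beta \in J_{i_0}$.  Here I will exploit that $y_{K \cap J_{i_0}} \in W_{K \cap J_{i_0}}$ fixes the orthogonal complement of $\Span(K \cap J_{i_0})$ pointwise, yielding $\beta - \theta_{J_{i_0}} \in \Span(K \cap J_{i_0})$.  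Expanding $\theta_{J_{i_0}} = \sum_{\alpha' \in J_{i_0}} c_{\alpha'} \alpha'$ with every $c_{\alpha'} > 0$ (as $\Phi_{J_{i_0}}$ is irreducible), and comparing coefficients of simple roots in $J_{i_0} \setminus (K \cap J_{i_0})$, the strict positivity of the $c_{\alpha'}$ forces $J_{i_0} \setminus (K \cap J_{i_0}) = \{\beta\}$.  Since $\alpha_0 \in J_{i_0} \setminus K = J_{i_0} \setminus (K \cap J_{i_0})$, we conclude $\alpha_0 = \beta$, and hence $\beta = \alpha_0 \notin \Delta(v)$, as required.
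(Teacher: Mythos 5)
Your proof is correct and follows essentially the same strategy as the paper's: both fix $\alpha_0\in J\setminus\Delta(v)$, take $\eta=-\theta_{J_{i_0}}$ to be the lowest root of the irreducible component of $\Phi_J$ containing $\alpha_0$, and deduce $w^{-1}(\eta)\in\Phi^-$ by noting $\eta\in\Phi_J^-\setminus\Phi_K^-$, that $y_K$ keeps this inside $\Phi_J^-$, and that $v^{-1}(\Phi_J^-)\subseteq\Phi^-$. The only real divergence is in establishing $w^{-1}(\eta)\notin\Delta^-$: you split into cases on whether $\beta:=-y_K(\eta)$ is simple and, when it is, pin $\beta$ down to $\alpha_0$ by comparing coefficients of the highest root against the subspace fixed by $y_{K\cap J_{i_0}}$, whereas the paper runs a shorter contradiction — if $w^{-1}(\eta)=-\alpha\in\Delta^-$ then $-y_K(\eta)=v(\alpha)\in\Delta(v)$, so $-\eta=y_K(v(\alpha))\in\Phi_{\Delta(v)}$ (as $y_K\in W_{\Delta(v)}$), forcing $\eta$ into $\Span(\Delta(v))$, which is impossible since $\alpha_0\notin\Delta(v)$ appears with nonzero coefficient in $\eta$. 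Both routes are valid; the paper's avoids the case analysis on whether $\beta$ is simple.
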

\begin{proof} By assumption, there exists $\beta \in J$ such that $\beta \notin \Delta(v)$.  Let $\eta\in \Phi_J^-$ be a minimal element with the property that $-\beta$ is a summand of $\eta$, that is, such that $\eta \preceq -\beta$.  In general, $\Phi_J$ is a disjoint union of irreducible root subsystems of $\Phi$.  One of these irreducible subsystems contains $\beta$ and we select $\eta$ to be the minimal root of this subsystem. In particular, $\eta$ is not in the $\Z$-span of the simple roots $\Delta(v)$.  

By our choice of $\eta$ above, we have $\eta\in \Phi_J^- \setminus \Phi_K^-$.  Since  $y_K(\Phi_J^- \setminus \Phi_K^-) \subseteq \Phi_J^-$ we have $y_K(\eta)\in \Phi_J^-$.  Now the fact that $v\in {^JW}$ implies $v^{-1}(\Phi_J^-) \subseteq \Phi^-$ so $w^{-1}(\eta) \in \Phi^-$.  

To complete the proof, we show that $w^{-1}(\eta) \notin \Delta^-$.  For the sake of contradiction, suppose $w^{-1}(\eta) = -\alpha\in \Delta^-$.  This implies $-y_K(\eta) = v(\alpha)$ and since $-y_K(\eta) \in \Phi_J^+$ we have $v(\alpha) \in v(\Delta) \cap \Phi_J^+ = \Delta(v)$.  But as $K\subseteq \Delta(v)$, we would then get that $-\eta \in y_K(\Delta (v)) \subseteq \Phi_{\Delta(v)}$.  This implies $\eta$ is in the $\Z$-span of $\Delta(v)$, contradicting our choice of $\eta$ above. 
\end{proof}

With the previous lemma in hand, we can now prove Theorem~\ref{thm.singularTpts}; the proof applies the Jacobian criterion of Lemma~\ref{lemma.Jacobian} to the patch ideals $\ci_{w,J}$.

\begin{proof} [Proof of Theorem~\ref{thm.singularTpts}] Let $\mathbf{u}$ be as in~\eqref{eqn.def.u} for some fixed choice of ordering on the set $w(\Phi^-)$.

We begin with the proof of (2).  By Lemma~\ref{lemma.singular2}, there exists a minimal root $\eta\in \Phi_J^-$ such that $w^{-1}(\eta) \in \Phi^- \setminus \Delta^-$.  Fix generators for $\ci_{w,J}$ as in Lemma~\ref{lemma.generators}, so the polynomial $\pi_\eta(\Ad(\mathbf{u}^{-1})(X_J))$ is a generator.  Since $\eta\in \Phi_J$ is a minimal element, the indexing set of the sum appearing in Lemma~\ref{lemma.linear.term} is empty.  As $\eta(S_J)=0$, this implies that $\pi_\tau(\Ad(\mathbf{u}^{-1})(X_J))$ has no linear terms.  We conclude $\dot wB$ is a singular point of $\Hess(X_J)$ by Lemma~\ref{lemma.Jacobian}.

We now assume that $\Delta(v)=J$ and prove (1).  For simplicity, write $y=y_K$ throughout the rest of the proof, and write $\ci_{y,\Pet}$ for the patch ideal of $\Pet_J$ (considered as a subvariety of $\cb_J$) at $\dot yB_J$.  Let $\mathsf{J}_w$ be the Jacobian matrix defined by the generators of $\ci_{w,J}$ given in Lemma~\ref{lemma.generators}, and let $\mathsf{J}_y$ be the Jacobian matrix defined by the generators of $\ci_{y,\Pet}$.  We show that $\mathsf{J}_w$ has full rank if and only if $\mathsf{J}_y$ has full rank.

The rows of $\mathsf{J}_w$ are indexed by $w(\Phi^-\setminus\Delta^{-})$, and the columns by $w(\Phi^-)$.  Order the rows and columns so that the rows indexed by elements of $\Phi\setminus \Phi_J$ come first, followed by rows indexed by elements of $\Phi_J$, and similarly for the columns.  We can now consider $\mathsf{J}_w$ as a block matrix of the form
\[
\mathsf{J}_w = \left[ \begin{array}{c|c} \sf{A} & \sf{B}\\ \hline \sf{C} & \sf{D} \end{array}  \right]
\]
where $\mathsf{A}$ is the submatrix of $\mathsf{J}_w$ whose rows correspond to the generators $\pi_\eta(\Ad(\mathbf{u}^{-1})(X_J))$ of $\ci_{w,J}$ with $\eta\notin \Phi_J$ and whose columns correspond to variables $z_\gamma$ with $\gamma\notin  \Phi_J$.  To show that $\mathsf{J}_w$ has full rank if and only if $\mathsf{J}_y$ has full rank, we show that $\mathsf{A}$ has full rank, $\mathsf{C}=0$, and $\mathsf{D}=\mathsf{J}_y$ (under a compatible choice of coordinates).

To show that $\mathsf{D}=\mathsf{J_y}$, we first show that their rows and columns are indexed by the same sets.  The rows of $\mathsf{D}$ are indexed by $w(\Phi^-\setminus\Delta^-)\cap\Phi_J$ while the rows of $\mathsf{J_y}$ are indexed by $y(\Phi_J^-\setminus J^-)$.  The columns of $\mathsf{D}$ are indexed by $w(\Phi^-)\cap\Phi_J$ while the columns of $\mathsf{J_y}$ are indexed by $y(\Phi_J^-)$.  Hence we need to show that $w(\Phi^-)\cap\Phi_J=y(\Phi_J^-)$ and $w(\Phi^-\setminus\Delta^-)\cap\Phi_J=y(\Phi_J^-\setminus J^-)$.

Since $v\in {^JW}$, we get 
\begin{eqnarray}\label{eqn.partition2}
v^{-1} (\Phi_J^-) \subset \Phi^- \Rightarrow \Phi^-\cap v^{-1}(\Phi_J) = v^{-1}(\Phi_J^-) \Rightarrow w(\Phi^-)\cap \Phi_J = y(\Phi_J^-).
\end{eqnarray}
To show the second equality, note our assumption that $\Delta(v)=J$, so
\[
J^- = \Delta^-(v) = v(\Delta^-)\cap \Phi_J \Rightarrow y(J^-) =  w(\Delta^-)\cap \Phi_J.
\]
Together with the last equality of~\eqref{eqn.partition2} we get 
\[
w(\Phi^- \setminus \Delta^-) \cap \Phi_J =  y(\Phi_J^-) \setminus y(J^-) = y(\Phi_J^- \setminus J^-).
\]

Whether or not $\mathsf{J}_y$ has full rank does not depend on the chosen coordinates for $\ci_{y,\Pet}$.  We choose compatible coordinates so that $\mathsf{D}=\mathsf{J}_y$.  Substitute $z_\gamma=0$
for all $\gamma\in w(\Phi^- )\cap (\Phi\setminus \Phi_J)$ in the generic element $\mathbf{u}$ fixed above to get an element 
\[
\mathbf{u}_y := u_{\gamma_{i_1}}(z_{\gamma_{i_1}}) \cdots u_{\gamma_{i_{k}}}(z_{\gamma_{i_{k}}}) \in  \C[\mathbf{z}] \otimes   (L_J\cap U_y)
\]
where $y(\Phi_J^-) = \{ \gamma_{i_1}, \ldots, \gamma_{i_{k}} \} \subset w(\Phi^-)$.
Applying Lemma~\ref{lemma.generators} to $y \in W_J$ with the element $\mathbf{u}_y$ above, we get that the patch ideal $\ci_{y,\Pet}$ of the Peterson variety $\Pet_J$ is generated by equations $\pi_\eta(\Ad(\mathbf{u}_y^{-1})(N_J))$ for $\eta \in y(\Phi_J^-\setminus J^-)$.

To show $\mathsf{D}=\mathsf{J}_y$, it suffices to show that, for all $\eta \in y(\Phi_J^- \setminus J^-)$,
\begin{eqnarray}\label{eqn.levi.part}
\pi_\eta(\Ad(\mathbf{u}^{-1})(X_J)) - \pi_\eta(\Ad(\mathbf{u}_y^{-1})(N_J)) \in O(2,\mathbf{z}).
\end{eqnarray}
In other words, the linear terms of $\pi_\eta(\Ad(\mathbf{u}^{-1})(X_J))$ match those of $\pi_\eta(\Ad(\mathbf{u}_y^{-1})(N_J))$ for all $\eta\in y(\Phi_J^- \setminus J^-)$.  Since $\eta\in \Phi_J$ we have $\eta(S_J)=0$, and by Lemma~\ref{lemma.linear.term}, each summand of the linear term of $\pi_\eta(\Ad(\mathbf{u}^{-1})(X_J))$ is $-c_{\gamma,\alpha}z_{\gamma}$ where $\eta = \gamma+\alpha$ for $\gamma\in w(\Phi^-)$ and $\alpha\in J$.  Since $\eta \in \Phi_J$ we get that $\gamma\in w(\Phi^-)\cap \Phi_J$ for all such $\gamma$.  We conclude $\gamma \in y(\Phi_J^-)$ by the final equality in~\eqref{eqn.partition2}. Equation~\eqref{eqn.levi.part} now follows by another application of Lemma~\ref{lemma.linear.term}, this time to the Lie algebra $\fl_J$ with $X_J=N_J$, together with the fact that the structure constants $c_{\gamma,\alpha}$ for $\fl_J$ are the same as for $\fg$, as $\fl_J$ is a Lie subalgebra of~$\fg$.

Equation~\eqref{eqn.levi.part} also implies that $\mathsf{C}=0$ since $\pi_\eta(\Ad(\mathbf{u}_y^{-1})(N_J))$ does not involve the variable $z_\gamma$ for $\gamma\in\Phi\setminus \Phi_J$.

If $\eta \in w(\Phi^- \setminus \Delta^-)$ and $\eta \notin \Phi_J$, then $\eta(S_J)\neq 0$, and, by Lemma~\ref{lemma.linear.term}, the generator $\pi_\eta(\Ad(\mathbf{u}^{-1})(X_J))$ contains $z_\eta$ as a summand of its linear term.  Note that $z_{\eta}$ can only appear as a summand of $\pi_\beta (\Ad(\mathbf{u}^{-1})(X_J))$ if both $\beta\in \Phi\setminus \Phi_J$ (by equation~\eqref{eqn.levi.part}) and $\beta \succeq \eta$ (by Lemma~\ref{lemma.linear.term}).  Fix a total order of the roots  $\Phi\setminus \Phi_J$ so that $\beta \succeq \eta$ implies $\beta$ appears before $\eta$ in the total order.  Then $\mathsf{A}$ is upper triangular with respect to this order, and entries on the ``diagonal'' are nonzero.  Hence $\mathsf{A}$ has full rank.
\end{proof}

\begin{example}\label{ex.A3.Jacobian}  Consider the same set-up as in Example~\ref{ex.A3.coordinates}: $G=GL_4(\C)$ with regular element $X_{(3,1)}$ and $w=s_2$.  In this case the shortest coset representative for $w$ is $v=e$, so $\Delta(v)=J= J_{(3,1)}=\{\alpha_1, \alpha_2\}$, $K=\{\alpha_2\}$, and $y_K= s_2=w$.  Theorem~\ref{thm.singularTpts} tells us that $\dot s_2B$ is singular in $\Hess(X_{(3,1)})$ if and only if $\dot s_2 B_J$ is singular in the Peterson variety $\Pet_{J}$.  To illustrate this statement and give an example of the proof technique above, we compute the Jacobian matrix in each case.  Using the generators from Example~\ref{ex.A3.coordinates}, the Jacobian associated to the full patch ideal $\ci_{s_2, J}$ is 
\begin{eqnarray}\label{eqn.ex.A3.Jacobian}
\begin{bmatrix} -1 & -2 & 0 & 0 & 0 & 0\\ 0 & 0 & -2 & 0 & 0 & 0 \\ 0 & 0 & 0 & 0 & 0 & -1    \end{bmatrix}.
\end{eqnarray}
Here we fix the order on rows/columns as in the proof of Theorem~\ref{thm.singularTpts} above.
We now consider $\mathbf{u}_{s_2} = u_{\alpha_2}(z_{\alpha_2})u_{-\alpha_1}(z_{-\alpha_1}) u_{-\alpha_1-\alpha_2}(z_{-\alpha_1-\alpha_2})$. 
The patch ideal for the Peterson $\Pet_J$ at $w=s_2$ is generated by the single polynomial $\pi_{-\alpha_1}(\mathbf{u}_{s_2}^{-1}N_J \mathbf{u}_{s_2})$, which has the same linear terms as $\pi_{-\alpha_1}(\mathbf{u}^{-1}N_J \mathbf{u})$ (actually in this case, we get that these two polynomials are equal) and gives us the Jacobian
\[
\begin{bmatrix} 0 & 0 & -1    \end{bmatrix}.
\]
Comparing to~\eqref{eqn.ex.A3.Jacobian} we see that this matches the lower right $1\times 3$-block. In particular, the Jacobian matrix~\eqref{eqn.ex.A3.Jacobian} has full rank if and only if the matrix above does.  This confirms the conclusions of Theorem~\ref{thm.singularTpts} and shows $\dot s_2B$ is smooth in $\Hess(X_{(3,1)})$.
\end{example}

In the case of the type A Peterson variety, \cite{Insko-Yong2012} proved that the Hessenberg--Schubert cell $C_w\cap \Pet_{\Delta}$ is contained in the singular locus of $\Pet_\Delta$ anytime $\dot wB\in \Pet_{\Delta}$ is singular.  The next example shows that this property does not hold for arbitrary regular Hessenberg varieties, namely that $C_w\cap \Hess(X_J)$ may contain smooth points even when $\dot wB$ is a singular point of $\Hess(X_J)$.

\begin{example} \label{ex.A3.3}  Let $G=GL_4(\C)$ and $\mu = (2,2)$ with $J = J_\mu = \{\alpha_1, \alpha_3\}$ as in Examples~\ref{ex.A3.1} and~\ref{ex.A3.2}.  As noted in Example~\ref{ex.A3.2} the permutation $w=s_1s_2s_1=[3,2,1,4]$ is $J$-admissible. In this case, $v=[3,1,2,4]$ and $\Delta(v) = \{\alpha_1\}\neq J$ so $\dot wB$ is singular by Theorem~\ref{thm.singularTpts}.  It is straightforward to show that each element of $C_w\cap \Hess(X_{(2,2)})$ is of the form $u_1 \dot w B$ where
\[
u_1 = \begin{bmatrix} 1 & x_{12} & x_{12}x_{23}-\frac{1}{2}x_{23} & 0 \\ 0 & 1 & x_{23} & 0 \\ 0 & 0 & 1 & 0 \\ 0 &0 & 0 & 1 \end{bmatrix}\;\; \textup{ for some $x_{12}, x_{23}\in \C$}.
\]
As above, the adjoint representation is given by matrix conjugation in this case, and, computing generators of the patch ideal 
\[
\ci_{u_1w, J}= \left< \pi_{\eta}(\mathbf{u}^{-1}u_1^{-1} X_{(2,2)} u_1 \mathbf{u}) \mid w^{-1}(\eta)\in \Phi^-\setminus \Delta^- \right>
\]
for $\cn_{u_1w, J}$, we observe that $u_1 \dot wB$ is singular if and only if $x_{23}=0$.  This shows that $U_{\alpha_1} \dot w B$ is a subset of the singular locus but a generic point of the Hessenberg--Schubert cell $C_w\cap \Hess(X_{(2,2)})$ is not singular.  

Similar reasoning and computations show that the singular locus of $\Hess(X_{(2,2)})$ is 
\[
\overline{C_{s_2s_1s_3}\cap \Hess(X_{(2,2)})} \cup \overline{U_{\alpha_3}\dot s_3\dot s_2\dot s_3B} \cup \overline{U_{\alpha_1} \dot s_1\dot s_2 \dot s_1 B}.
\]
In particular, we note that the singular locus of $\Hess(X_{(2,2)})$ is not equal to a union of Hessenberg--Schubert cells in $\Hess(X_{(2,2)})$.
\end{example}

It is an open problem to describe the full singular locus of $\Hess(X_J)$ outside of the regular semisimple and regular nilpotent cases.  In the regular semisimple case, $\Hess(X_J)$ is smooth by~\cite{DPS1992}, and we compute the singular locus of the Peterson variety in all Lie types, showing the singular locus is a union of Hessenberg--Schubert cells, in Section~\ref{sec.Peterson} below.

\begin{example}[Type $\textup{B}_4$]\label{ex.typeB4.singularpts} Let $G=O_9(\C)$ and $\fg=\fo_9(\C)$ be the orthogonal group and corresponding Lie algebra and set $J= \{\alpha_1, \alpha_2, \alpha_4\}$ as in Example~\ref{ex.typeB4.1}.  Applying Theorem~\ref{thm.singularTpts} and using the table appearing in Example~\ref{ex.typeB4.1}, we get that $\dot y_K \dot vB$ is a singular point of $\Hess(X_J)$ for all $v\in \{s_3, s_3s_4, s_3s_2, s_3s_4s_3, s_3s_2s_1\}$ and all $K\subseteq \Delta(v)$.  Although although the table of Example~\ref{ex.typeB4.1} only shows a subset of the elements $v$ with $v\in {^J}W$, it is easy to confirm using SageMath that $\Delta(v) \neq J$ for all $v\neq e$ and $v\neq v_0$. Thus $\dot y_K \dot vB$ for all $K\subseteq \Delta(v)$  is a singular point of $\Hess(X_J)$ whenever $v\neq e$ and $v\neq v_0$.

When $v=e$ or $v=v_0$ then $\dot y_K\dot v B$ is singular if and only if $\dot y_K B_J$ is a singular point in $\Pet_J$.  Note that $\Pet_J = \Pet_{\{\alpha_1, \alpha_2\}} \times \Pet_{\alpha_4}$.  Now, $\Pet_{\{\alpha_1, \alpha_2\}}$ is isomorphic to the type A Peterson variety $\Pet_{\Delta}\subseteq GL_3(\C)/B$ and $\Pet_{\alpha_4} \simeq \mathbb{P}^1$.  The first author and Yong proved $\dot y B\in \Pet_{\{\alpha_1, \alpha_2\}}$ is a smooth point of $\Pet_{\{\alpha_1, \alpha_2\}}$ if and only if $y\neq e$ \cite{Insko-Yong2012}. We conclude that the only singular points $\dot y B_J$ of $\Pet_J$ are $\dot e B_J$ and $\dot s_4B_J$, and accordingly, $\dot y_K B_J$ and $\dot y_K \dot v_0 B_J$ are singular in $\Hess(X_J)$ if and only if $y_K = e$ or $y_K = s_4$.
\end{example}

In the example above, it is possible to carry out a full classification of all singular points $\dot wB$ in $\Hess(X_J)$ as the Peterson variety $\Pet_J$ is the product of two type A Peterson varieties and the singular locus of all type A Peterson varieties was computed by the first author and Yong in~\cite{Insko-Yong2012}.   To apply Theorem~\ref{thm.singularTpts} in all cases, we require a generalization of those results to arbitrary Lie type.  This is carried out in Section~\ref{sec.Peterson} below; see Theorem~\ref{thm.Pet-singularities} appearing there.  Before commencing with our study of the Peterson variety in all Lie types, we record several applications of Theorem~\ref{thm.singularTpts} in the Type A case in the next section.

%%%%%%%%%%%%%%%%%%%%%%
%%%%%%%%%%%%%%%%%%%%%%

\section{Type A Applications} \label{sec.typeA.app}

In the type A case of $G=GL_n(\C)$ and $W\simeq S_n$, we can use Theorem~\ref{thm.singularTpts} to identify all singular permutation flags $\dot wB$ in $\Hess(X_\mu)$ because the singularities of type A Peterson varieties are known~\cite{Insko-Yong2012}.  We carry out this program below. Theorem~\ref{thm.typeA.singular} below classifies these points combinatorially using the one-line notation of $w$.  

Let $\mu$ be a composition of $n$ and recall that $\mu$ defines a set partition as in~\eqref{eqn.set-partition} and each set in this partition is called a $\mu$-block.  We say $\mu$ has length $\ell$ if $\mu = (\mu_1, \ldots, \mu_\ell)$. If $\mu$ has length $\ell$ then the corresponding set partition of $[n]$ has precisely $\ell$ $\mu$-blocks.  By convention, we set $\mu_0=0$.  Recall that ${^J}W$ denotes the set of shortest right coset representatives for $W_J \backslash W$.  In our type A setting, when $J=J_\mu$ we denote this set by~${^\mu}W$. 

\begin{definition}\label{def.perm.blocks} Suppose $\mu\vDash n$ is a composition of length $\ell$.  Then $v\in {^\mu}W$ is a \emph{permutation of $\mu$-blocks} if for all $k$ such that $\mu_0 + \cdots + \mu_{p-1}+1 \leq k \leq \mu_0+\cdots + \mu_{p-1}+\mu_p-1$ and $1\leq p\leq \ell$, we have $v^{-1}(k+1) = v^{-1}(k)+1$.
\end{definition}

Although the formal definition is technical, it is easy in practice to determine whether $v\in {^\mu}W$ is a permutation of $\mu$-blocks directly from the one-line notation.  Recall that $i$ and $j$ are in the same $\mu$-block whenever both indices belong to the same block in the set partition, that is,  whenever $\mu_0 + \cdots + \mu_{p-1} +1\leq i,j \leq \mu_0+\cdots + \mu_{p-1}+\mu_p$ for some $1\leq p\leq \ell$.  Then $v$ is a permutation of $\mu$-blocks if its one-line notation is obtained from that of the identity permutation by swapping the $\mu$-blocks, but leaving the entries within each individual block in consecutive order.  From this description, it follows immediately that there are precisely $\ell\,!$ permutations of $\mu$-blocks, where $\ell$ is the length of $\mu$.

\begin{example} Let $n=8$ and $\mu = (3,2,1,2)$.  The $\mu$-blocks are $\{1,2,3\}$, $\{4,5\}$, $\{6\}$, and $\{7,8\}$.  We see that $v=74125836\in {^\mu}W$ is not a permutation of $\mu$-blocks since, for example, the block $\{1,2,3\}$ has been separated, as $v^{-1}(3)=7\neq 5=v^{-1}(2)+1$.  On the other hand, $78123645$ is a permutation of ${\mu}$-blocks.
\end{example}

\begin{example} If $\mu=(k,n-k)$ is a composition of $n$ with precisely 2 parts then there are only $2$ permutations of the $(k,n-k)$-blocks: $e = [1,2,\ldots, k, k+1, \ldots, n]\in S_n$ and $v_0=[k+1, \ldots, n, 1, 2, \ldots, k]\in S_n$. Here $v_0$ denotes the longest element of~${^\mu}W$.
\end{example}

\begin{lemma}\label{lemma.blocks} Suppose $\mu = (\mu_1, \ldots, \mu_{\ell})$ is a composition of $n$ with associated subset $J_\mu \subseteq \Delta$.  For all $v\in {^\mu}W$, $\Delta(v)=J_\mu$ if and only if $v$ is permutation of $\mu$-blocks.
\end{lemma}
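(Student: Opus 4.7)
The plan is to unpack the type A descriptions of both $\Delta(v)$ and the permutation-of-$\mu$-blocks condition, and then to prove the two containments $J_\mu \subseteq \Delta(v)$ and $\Delta(v) \subseteq J_\mu$ separately. First, I would record that in type A $v(\alpha_k) = \epsilon_{v(k)} - \epsilon_{v(k+1)}$ and that $\Phi_{J_\mu}^+$ consists of the roots $\epsilon_i - \epsilon_j$ with $i < j$ and $i,j$ in the same $\mu$-block. Consequently $v(\alpha_k) \in \Phi_{J_\mu}^+$ precisely when $v(k) < v(k+1)$ and the two values lie in a common $\mu$-block. On the other hand, $J_\mu = \Phi_{J_\mu}^+ \cap \Delta = \{\alpha_i \mid i, i+1 \textup{ lie in the same } \mu\textup{-block}\}$.

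For the forward direction (assume $\Delta(v) = J_\mu$), I would take an arbitrary $\alpha_i \in J_\mu$ and use $J_\mu \subseteq \Delta(v)$ to write $\alpha_i = v(\alpha_k)$ for some $k$. Matching the two endpoints of $\alpha_i = \epsilon_i - \epsilon_{i+1}$ with $\epsilon_{v(k)} - \epsilon_{v(k+1)}$ forces $v(k) = i$ and $v(k+1) = i+1$, i.e.\ $v^{-1}(i+1) = v^{-1}(i)+1$, which is exactly Definition~\ref{def.perm.blocks}. Since this holds for every $i$ such that $i$ and $i+1$ lie in the same $\mu$-block, $v$ is a permutation of $\mu$-blocks.

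For the backward direction, I would first iterate Definition~\ref{def.perm.blocks} to observe that, within each $\mu$-block, the map $v^{-1}$ restricts to an arithmetic progression; equivalently, the consecutive elements of a given $\mu$-block are sent by $v^{-1}$ to a consecutive block of positions (with order preserved). Under this observation, given any $\alpha_i \in J_\mu$ and setting $k = v^{-1}(i)$, we get $v^{-1}(i+1) = k+1$, so $v(\alpha_k) = \alpha_i \in v(\Delta) \cap \Phi_{J_\mu}^+ = \Delta(v)$, yielding $J_\mu \subseteq \Delta(v)$. Conversely, if $v(\alpha_k) \in \Phi_{J_\mu}^+$, so that $v(k), v(k+1)$ belong to the same $\mu$-block and $v(k) < v(k+1)$, the arithmetic progression property applied to this block forces $v^{-1}(v(k)+1) = v^{-1}(v(k))+1 = k+1$, hence $v(k+1) = v(k)+1$. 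Thus $v(\alpha_k) = \alpha_{v(k)} \in J_\mu$, giving $\Delta(v) \subseteq J_\mu$.

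The argument is essentially unpacking of definitions, so I anticipate no substantive obstacle; the only point requiring a moment's care is the observation that the local condition $v^{-1}(k+1) = v^{-1}(k) + 1$ of Definition~\ref{def.perm.blocks}, applied iteratively, globally forces $v^{-1}$ to map each $\mu$-block bijectively and order-preservingly onto a consecutive interval of positions, which is what makes the containment $\Delta(v) \subseteq J_\mu$ go through.
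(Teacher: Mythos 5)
Your proof is correct. It differs in style from the paper's, which is shorter: the paper observes the chain of equivalences $\Delta(v)=J_\mu \Leftrightarrow v(\Delta)\cap\Phi_\mu^+ = J_\mu \Leftrightarrow v^{-1}(J_\mu)\subseteq\Delta$, where the second equivalence uses $v\in{^\mu}W$ together with the already-established fact $\Delta(v)\subseteq J_\mu$ from Lemma~\ref{lemma.deltav}, and then simply notes that $v^{-1}(J_\mu)\subseteq\Delta$ is a literal restatement of Definition~\ref{def.perm.blocks} since $v^{-1}(\epsilon_k-\epsilon_{k+1})\in\Delta$ iff $v^{-1}(k+1)=v^{-1}(k)+1$. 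Your forward direction is essentially the same computation, but in the backward direction you prove both containments by hand: the containment $J_\mu\subseteq\Delta(v)$ matches the paper's substance, while your verification of $\Delta(v)\subseteq J_\mu$ (via the arithmetic-progression observation) re-derives in type~A coordinates a special case of what Lemma~\ref{lemma.deltav} already gives root-theoretically in all types. Both routes are valid; the paper's is more economical because it leans on the earlier lemma, while yours is more self-contained and makes the one-line-notation picture explicit.
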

\begin{proof} We have that
\begin{eqnarray}\label{eqn.blocks}
\Delta(v)=J_\mu \Leftrightarrow  v(\Delta) \cap \Phi_\mu^+ = J_\mu \Leftrightarrow v^{-1}(J_\mu) \subseteq \Delta,
\end{eqnarray}
where the first equivalence follows from the definition of $\Delta(v)$.  The second equivalence holds since $v^{-1}(\Phi_\mu^+)\subseteq \Phi^+$, as $v\in {^\mu}W$, and $\Delta(v) \subseteq J_\mu$, by Lemma~\ref{lemma.deltav}. Recall from Section~\ref{sec.typeA} that 
\[
J_\mu = \Delta\setminus \{\alpha_{\mu_1}, \alpha_{\mu_1+\mu_2}, \ldots, \alpha_{\mu_1+\cdots + \mu_{\ell-1}}\},
\]
so $\alpha_k = \epsilon_k - \epsilon_{k+1}\in J_\mu$ if and only if $k$ satisfies $\mu_0 + \cdots + \mu_{p-1}+1 \leq k \leq \mu_0+\cdots + \mu_{p-1}+\mu_p-1$ for some $1\leq p\leq \ell$.  The statement of the lemma now follows immediately from~\eqref{eqn.blocks} and Definition~\ref{def.perm.blocks}.  
\end{proof}

Suppose $k\leq n$.  We say $w\in S_n$ \emph{contains the pattern $v\in S_{k}$} if there exist indices $1\leq i_1<i_2<\ldots<i_k\leq n$ such that, for all $a,b\in [k]$, $w_{i_a}<w_{i_b}$ if and only if $v_a<v_b$.  For example $631524\in S_6$ contains the pattern $4231\in S_4$, realized by the subsequence $6,3,5,2$ of $631524$.  The first author and Yong gave a pattern avoidance characterization of the smooth points $\dot wB$ of $\Pet_\Delta$~\cite[Theorem 4]{Insko-Yong2012}.  We now use Theorem~\ref{thm.singularTpts} to extend their result to the setting of all regular Hessenberg varieties associated to the minimal indecomposable Hessenberg space. Recall that in the type A setting we simplify notation by writing $\mu$ instead of $J_\mu$ whenever it make sense, for example, $L_{\mu}$ for the Levi subgroup of $GL_n(\C)$ corresponding to the subset $J_\mu$. 

\begin{theorem}\label{thm.typeA.singular} Suppose that $\mu = (\mu_1, \ldots, \mu_{\ell})$ is a composition of $n$ with associated subset $J_\mu \subseteq \Delta$ and that $w=yv$ with $v\in {^\mu}W$ and $y\in W_\mu$ is $J_\mu$-admissible. Then $\dot w B$ is a smooth point of $\Hess(X_{\mu})$ if and only if $v$ is a permutation of $\mu$-blocks and $y\in W_\mu$ avoids the patterns $123$ and $2143$ within each $\mu$-block.
\end{theorem}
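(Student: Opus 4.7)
The plan is to reduce the question to the Peterson variety via Theorem~\ref{thm.singularTpts}, factor the resulting Peterson variety as a product of type A Peterson varieties indexed by $\mu$-blocks, and then invoke the pattern avoidance classification of smooth permutation flags from~\cite{Insko-Yong2012}.

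First, since $w$ is $J_\mu$-admissible, Corollary~\ref{cor.cells} produces a unique reduced factorization $w = y_K v$ with $v \in {^{J_\mu}}W$ and $K \subseteq \Delta(v)$; the hypothesis $w = yv$ with $y \in W_\mu$ forces $y = y_K$. By Lemma~\ref{lemma.blocks}, the condition $\Delta(v) = J_\mu$ is equivalent to $v$ being a permutation of $\mu$-blocks. If $\Delta(v) \subsetneq J_\mu$, then part (2) of Theorem~\ref{thm.singularTpts} immediately yields that $\dot w B$ is singular in $\Hess(X_\mu)$; hence the ``permutation of $\mu$-blocks'' condition is necessary. When it holds, part (1) of Theorem~\ref{thm.singularTpts} reduces the smoothness of $\dot wB$ in $\Hess(X_\mu)$ to the smoothness of $\dot y_K B_{J_\mu}$ in the Peterson variety $\Pet_{J_\mu}$ of the Levi~$L_\mu$.

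Next, I would factor everything according to $\mu$-blocks. Since $L_\mu \cong GL_{\mu_1}(\C) \times \cdots \times GL_{\mu_\ell}(\C)$, the flag variety satisfies $\cb_{J_\mu} \cong \cb^{(1)} \times \cdots \times \cb^{(\ell)}$, where $\cb^{(i)} := GL_{\mu_i}(\C)/B_{\mu_i}$. The regular nilpotent $N_{J_\mu} = \sum_{\alpha \in J_\mu} E_\alpha$ decomposes as a sum $N^{(1)} + \cdots + N^{(\ell)}$, with $N^{(i)} \in \mathfrak{gl}_{\mu_i}(\C)$ a regular nilpotent in the $i$-th block. Because the minimal indecomposable Hessenberg space inside $\fl_\mu$ respects the product decomposition of $\fl_\mu$, the Peterson variety factors as $\Pet_{J_\mu} \cong \Pet^{(1)} \times \cdots \times \Pet^{(\ell)}$, where each $\Pet^{(i)} \subseteq \cb^{(i)}$ is a type A Peterson variety. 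Under this factorization, $y_K$ decomposes as $(y^{(1)}, \ldots, y^{(\ell)})$ with $y^{(i)} \in S_{\mu_i}$ (after identifying the $i$-th $\mu$-block with $\{1, \ldots, \mu_i\}$), and $\dot y_K B_{J_\mu}$ is smooth in $\Pet_{J_\mu}$ if and only if $\dot y^{(i)} B^{(i)}$ is smooth in $\Pet^{(i)}$ for every $i$.

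Finally, I would invoke Theorem 4 of~\cite{Insko-Yong2012}, which classifies the smooth permutation flags of the type A Peterson variety as those indexed by permutations avoiding the patterns $123$ and $2143$. Applied to each factor $\Pet^{(i)}$, this is exactly the condition that $y$ avoid $123$ and $2143$ within each $\mu$-block. The main obstacle will be verifying the factorization step carefully: one must confirm that the natural identification of the $i$-th $\mu$-block with $\{1, \ldots, \mu_i\}$ transports the restriction of $y$ to a permutation of $S_{\mu_i}$ whose pattern data matches the intended ``within each $\mu$-block'' reading of the theorem statement. This is essentially a bookkeeping check given the explicit product structure of $W_\mu$, but it must be made precise so that the Insko--Yong criterion transfers cleanly to each factor.
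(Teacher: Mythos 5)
Your proposal is correct and follows essentially the same route as the paper: reduce to the Peterson variety $\Pet_{J_\mu}$ of the Levi $L_\mu$ via Theorem~\ref{thm.singularTpts} (using Lemma~\ref{lemma.blocks} to translate the condition $\Delta(v)=J_\mu$ into the ``permutation of $\mu$-blocks'' statement), factor $\Pet_{J_\mu}$ as a product of type~A Peterson varieties indexed by the $\mu$-blocks, and then apply the Insko--Yong pattern avoidance criterion to each factor. The ``bookkeeping check'' you flag at the end is exactly what the paper spells out, namely writing $K = \bigsqcup_p K_p$, $y = y_{K_1}\cdots y_{K_\ell}$, and describing the one-line notation of each $y_{K_p}$ so that the $123$/$2143$ avoidance criterion transfers cleanly to each block.
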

\begin{proof}  By Corollary~\ref{cor.cells} we know $y=y_K$ for some $K\subseteq \Delta(v)$, and by Theorem~\ref{thm.singularTpts}, $\dot wB$ is smooth if and only if  $\Delta(v)=J_\mu$ and $\dot y B_{\mu}$ is a smooth point of $\Pet_\mu:=\Pet_{J_\mu}$.  By Lemma~\ref{lemma.blocks} $v$ must be a permutation of $\mu$-blocks.

The Peterson variety $\Pet_{\mu}$ is the product of Peterson varieties $\Pet_{\mu_1} \times \cdots \times \Pet_{\mu_\ell}$, corresponding to the factors in the Levi subgroup $L_{\mu}\simeq GL_{\mu_1}(\C)\times \cdots \times GL_{\mu_\ell}(\C)$. 
For each $p$ such that $1\leq p \leq \ell$, let 
\[
K_p:= K \cap \{ \alpha_{\mu_0+\cdots +\mu_{p-1}+1}, \ldots, \alpha_{\mu_0+\cdots + \mu_{p-1}+\mu_p-1} \}
\]
if $\mu_p>1$ and $K_p=\emptyset$ if $\mu_p=1$.  
Now $K= \bigsqcup_p K_p$, and we may write $y = y_{K_1}\cdots y_{K_\ell}$, where  $y_{K_p}$ is the longest element of the subgroup $W_{K_p}:= \left< s_\alpha \mid \alpha\in K_p  \right>$ and, by convention, we assume $y_\emptyset =e$.  In terms of the one-line notation, we have $y_{K_p}(i)=y(i)$ for all $i$ in the block $\{ \mu_0+\cdots + \mu_{p-1}+1, \ldots, \mu_1+\cdots + \mu_{p-1} + \mu_p \}$ and $y_{K_p}(i)=i$ otherwise.  In other words, we obtain the one-line notation of $y_{K_p}$ from that of $y$ by keeping all entries in the $p$-th $\mu$-block the same and putting the rest in increasing order. 

For each $p$, let $B_{p}:= B\cap GL_{\mu_p}(\C)$, where $GL_{\mu_p}(\C)$ is viewed as a subgroup of $GL_n(\C)$ by identifying it with the $p$-th block of the Levi subgroup $L_{\mu}$.  Clearly $\dot y B_{\mu}$ is a smooth point of $\Pet_{\mu}$ if and only if $\dot y_{K_p}B_{p}$ is a smooth point of $\Pet_{\mu_p}$ for all $p$, that is, if and only if $y_{K_p}$ avoids the patterns $123$ and $2143$ by~\cite[Theorem 4]{Insko-Yong2012}.  This completes the proof.
\end{proof}

\begin{example}\label{ex1.8.431} Let $n=8$ and $\mu = (4,3,1)$.  The $\mu$-blocks are $\{1,2,3,4\}$, $\{5,6,7\}$, and $\{8\}$ in this case.    Consider the permutations 
\[
w_1=76582143, \,w_2=567 8 3214, \,w_3= 765 8 3214.
\] 
Each is admissible with shortest coset representative $v= 56781234$, which is a permutation of $\mu$-blocks.  To check whether $\dot w_i B$ is a smooth point of $\Hess(X_\mu)$ in each case, we consider the entries within each $\mu$-block and apply Theorem~\ref{thm.typeA.singular}.  For example, $w_1$ gives us the permutations $2143$, $765$, and $8$.  The first block is one of the singular patterns, so $\dot w_1B$ is a singular point.  Similarly, the second $\mu$-block of $w_2$ gives the permutation $567$ which contains the pattern $123$, and we conclude that $\dot w_2B$ is also singular.  Finally, $\dot w_3B$ is smooth as each of the permutations $3214$, $765$, and $8$ avoid both of the singular patterns.
\end{example}

Note that very few admissible permutations in $S_n$ with $n\geq 3$ avoid the patterns $123$ and $2143$.  Indeed, as noted in~\cite{Insko-Yong2012}, for all $n\geq 3$ there are exactly three admissible permutations satisfying this pattern avoidance criterion, namely
\[
w_0=[n,n-1, \ldots, 2,1],\; [1, n, n-1, \ldots, 2], \; \textup{ and } \; [n-1, \ldots, 2, 1, n].
\]
As a result, Theorem~\ref{thm.typeA.singular} gives us a precise count of the smooth permutation flags in $\Hess(X_\mu)$.

\begin{corollary} \label{cor.TypeA.singcount} 
Let $\mu=(\mu_1, \ldots, \mu_\ell)$ be a composition of $n$.  There are precisely 
\[
 \ell!  \cdot  3^{|\{1\leq p \leq \ell \,\mid\, \mu_p \geq 3\}|} \cdot 2^{|\{1\leq p \leq \ell \,\mid\, \mu_p=2\}|}
\]
smooth permutation flags in $\Hess(X_\mu)$.
\end{corollary}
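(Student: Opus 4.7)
The plan is to combine Theorem~\ref{thm.typeA.singular} with the enumeration of admissible permutations (in the sense of being longest elements of parabolic subgroups) avoiding the patterns $123$ and $2143$, carried out block by block.

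First, I would invoke Theorem~\ref{thm.typeA.singular} to reduce the count. Every $J_\mu$-admissible $w\in S_n$ has a unique factorization $w=yv$ with $v\in{^\mu}W$ and $y=y_K\in W_\mu$ for some $K\subseteq \Delta(v)$ (by Corollary~\ref{cor.cells}). The theorem then asserts that $\dot wB$ is smooth in $\Hess(X_\mu)$ precisely when (i) $v$ is a permutation of $\mu$-blocks and (ii) $y$ avoids $123$ and $2143$ within each individual $\mu$-block. Condition (i) implies $\Delta(v)=J_\mu$ by Lemma~\ref{lemma.blocks}, so that $K$ ranges over subsets of $J_\mu$. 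Permutations of $\mu$-blocks correspond bijectively to permutations of the $\ell$ blocks themselves, which supplies the factor $\ell!$.

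Next, I would separate the choice of $y$ across the $\mu$-blocks. Since $J_\mu$ is the disjoint union of the simple-root subsets corresponding to each block, the subgroup $W_\mu$ decomposes as the direct product $W_{\mu_1}\times\cdots\times W_{\mu_\ell}$ of symmetric groups acting on each block independently. Writing $K=\bigsqcup_p K_p$ according to this decomposition gives $y_K = y_{K_1}\cdots y_{K_\ell}$, where each $y_{K_p}$ is the longest element of a standard parabolic subgroup of $S_{\mu_p}$. The pattern avoidance condition in (ii) is imposed block-by-block, so the total count of valid $y$ is a product over $p$ of the number $N(\mu_p)$ of admissible permutations of the $p$-th block avoiding $123$ and $2143$.

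The main (and only real) computation is to evaluate $N(m)$ for each block size $m$. For $m=1$, the block is a singleton, so $N(1)=1$. For $m=2$, both elements of $S_2$ are longest elements of parabolic subgroups and trivially avoid both patterns, so $N(2)=2$. For $m\geq 3$, the discussion preceding the corollary (citing~\cite{Insko-Yong2012}) identifies exactly three such permutations, namely $[m,m-1,\ldots,1]$, $[1,m,m-1,\ldots,2]$, and $[m-1,\ldots,2,1,m]$, so $N(m)=3$. Multiplying,
\[
\ell! \cdot \prod_{p=1}^{\ell} N(\mu_p) = \ell!\cdot 1^{|\{p\,\mid\,\mu_p=1\}|}\cdot 2^{|\{p\,\mid\,\mu_p=2\}|}\cdot 3^{|\{p\,\mid\,\mu_p\geq 3\}|},
\]
which simplifies to the stated formula. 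No step presents a genuine obstacle; the content is entirely bookkeeping once Theorem~\ref{thm.typeA.singular} and the Insko--Yong enumeration are in hand.
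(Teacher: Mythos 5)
Your proposal is correct and follows essentially the same route as the paper: factor through Theorem~\ref{thm.typeA.singular}, count the $\ell!$ permutations of $\mu$-blocks, and multiply by the block-by-block counts of admissible pattern-avoiding permutations ($1$, $2$, or $3$ according as $\mu_p=1$, $\mu_p=2$, or $\mu_p\geq 3$). The paper states these block counts without re-deriving them, whereas you spell out the $m=1,2$ and $m\geq 3$ cases explicitly, but the underlying argument is the same.
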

\begin{proof} There are precisely $\ell!$ permutations of $\mu$-blocks in ${^\mu}W$.  The $p$-th $\mu$-block contributes $1$ smooth point if $\mu_p=1$, $2$ smooth points if $\mu_p=2$, and $3$ smooth points if $\mu_p\geq 3$.  We use the smooth permutations in each block and each permutation of $\mu$-blocks to obtain a total of $\ell!  \cdot  3^{|\{1\leq p \leq \ell \,\mid\, \mu_p \geq 3\}|} \cdot 2^{|\{1\leq p \leq \ell \,\mid\, \mu_p=2\}|}$ smooth points of the form $\dot wB$.
\end{proof}

\begin{example} Continuing with the set-up of Example~\ref{ex1.8.431}, suppose $n=8$ and $\mu = (4,3,1)$.  Using the formula of Corollary~\ref{cor.TypeA.singcount}, we get that $\Hess(X_{(4,3,1)})$ has precisely $3!\cdot 3^2 = 54$ smooth permutation flags. 
\end{example}

%%%%%%%%%%%%%%%%%%%%%%
%%%%%%%%%%%%%%%%%%%%%%

\section{The singular locus of $\Pet_{\Delta}$ in all Lie types} \label{sec.Peterson}

Theorem~\ref{thm.singularTpts} reduces the problem of finding out which Weyl flags are singular points of arbitrary regular minimal indecomposable Hessenberg varieties to the same problem on Peterson varieties.  In this section, we complete the solution of this problem by determining the singular locus of any Peterson variety.  The flag variety of any reductive group $G$ is a product of flag varieties of simple complex algebraic groups, and the Peterson variety of $G$ is the product of the Peterson varieties of these simple groups, so we assume $G$ is simple from here on.  If $\Phi$ is the irreducible root system corresponding to $G$, then we adopt the numbering conventions of~\cite[pg.~58]{Humphreys-LieAlg} for the associated Dynkin diagram and set of simple roots. Throughout this section, $N$ denotes a regular nilpotent element of $\fg$, and $\Pet_\Delta:=\Hess(N,H_\Delta)$ is the Peterson variety in $\cb$.

To state our theorem we first define particular subsets, denoted by $W^*$, of elements of finite Weyl groups.  Note that the definition of $W^*$ relies on the underlying root system, so types B and C are considered separately in the definition below.

\begin{definition}\label{defn.Pet-singularities}
Let $W$ be a irreducible (meaning its Dynkin diagram is connected) finite Weyl group.  We define $W^*\subseteq W$ as follows:
\begin{itemize}
\item If $W$ is a Weyl group of type $\mathrm{A}_n$, then $$W^* = \{y_K \mid K \subsetneq \Delta ,\, K \neq \Delta\setminus \{\alpha_1\} \textup{ and } K\neq\Delta\setminus\{\alpha_{n}\} \}.$$
\item If $W$ is a Weyl group of type $\mathrm{B}_n$, then $$W^* = \{y_K \mid K \subsetneq \Delta ,\, K \neq\Delta\setminus \{\alpha_1\} \}.$$
\item Otherwise (including if $W$ is of type $\mathrm{C}_n$), $$W^*=\{ y_K \mid K \subsetneq \Delta \}.$$
\end{itemize}
\end{definition}

Our main theorem is the following. 

\begin{theorem}\label{thm.Pet-singularities}  Let $G$ be a simple complex algebraic group with Weyl group $W$.  The singular locus of the Peterson variety $\Pet_\Delta$ in $G/B$ is 
\[
\bigcup_{w\in W^*} \left( C_{w} \cap \Pet_\Delta\right). 
\]
In particular, the singular locus of every Peterson variety is a union of smaller dimensional Peterson varieties.
\end{theorem}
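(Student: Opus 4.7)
The plan is to combine the Jacobian criterion from Lemma~\ref{lemma.Jacobian} with the explicit formula for linear terms from Lemma~\ref{lemma.linear.term} (applied with $S_J = 0$), and then exploit the highest root of $\Phi$. By Corollary~\ref{cor.Peterson.cells}, the Weyl flags of $\Pet_\Delta$ are exactly $\dot y_K B$ for $K\subseteq \Delta$, and $\dot w_0 B = \dot y_\Delta B$ is smooth because it lies in the open affine cell $C_{w_0}\cap \Pet_\Delta$ of the irreducible variety $\Pet_\Delta$ (Proposition~\ref{prop.affine.paving}). For each $K\subsetneq \Delta$, I first classify whether $\dot y_K B$ is singular, and then propagate the conclusion to the whole Hessenberg--Schubert cell.

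The key observation is that the linear part of the generator $\pi_{-\theta}(\Ad(\mathbf{u}^{-1})(N))$ of $\ci_{y_K}$ is identically zero: Lemma~\ref{lemma.linear.term} would require some simple $\alpha$ with $\theta+\alpha\in\Phi$, which contradicts maximality of $\theta$. Since $\theta$ has positive coefficient on every simple root (by irreducibility), $\theta\notin\Phi_K$ for any $K\subsetneq\Delta$, so $-\theta\in y_K(\Phi^-)$ always. Thus $\pi_{-\theta}$ appears as a generator of $\ci_{y_K}$ exactly when $y_K(\theta)\notin\Delta$, and Lemma~\ref{lemma.Jacobian}(1) immediately yields singularity at $\dot y_K B$ whenever this holds. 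A direct case-by-case computation shows $y_K(\theta)\in\Delta$ precisely when $K = \Delta\setminus\{\alpha_j\}$ for some simple root $\alpha_j$, with $y_K(\theta) = \alpha_j$; for all other $K\subsetneq\Delta$, singularity at $\dot y_K B$ follows at once.

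For the remaining exceptional subsets $K = \Delta\setminus\{\alpha_j\}$, I analyze the Jacobian directly. In the smooth cases listed in Definition~\ref{defn.Pet-singularities} (namely $\alpha_j = \alpha_1$ in types A, B, and $\alpha_j = \alpha_n$ in type A), I verify smoothness by showing that after suitably ordering rows and columns using the root poset of $\Phi^+\setminus\Phi_K^+$, the Jacobian is upper-triangular with nonzero diagonal; this exploits the fact that these $K$ correspond to Levi subgroups of maximal parabolics of Hermitian type, whose nilradicals have the well-behaved poset structure recorded in \cite{EHP2014}. For the singular cases, I exhibit two generators $\pi_{\eta_1}$ and $\pi_{\eta_2}$ whose linear parts are both nonzero scalar multiples of a single variable $z_\gamma$ (natural choices being $\eta_i = -\theta + \beta_i$ for two simple roots $\beta_1, \beta_2 \in \Delta\setminus\{\alpha_j\}$ with $-\theta+\beta_i$ a root, and $\gamma = -\theta$), producing a rank drop by Lemma~\ref{lemma.Jacobian}(2). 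The existence of such pairs can be verified uniformly using the structure of the root poset of $\Phi_K$ near $-\theta$, though a residual case-by-case check in each Lie type is required.

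To extend the singularity conclusion from $\dot y_K B$ to the entire cell $C_{y_K}\cap \Pet_\Delta$, I use Lemma~\ref{lemma.generators2}: writing $\Ad(u_1^{-1})(N) = N + \sum_{\beta\in\Phi^+} c_\beta' E_\beta$ and applying Lemma~\ref{lemma.adjoint}, any additional contributions to the linear part of $\pi_\eta$ at $u_1\dot y_K B$ involve $z_\gamma$ with $\eta-\gamma\in\Phi^+$. For $\eta = -\theta$ (or the $\eta_i$ arising in the rank-drop argument) the requirement $\theta + \beta\in \Phi$ still fails by maximality, so the vanishing-linear-term or linear-dependence property persists throughout the cell, giving $C_{y_K}\cap\Pet_\Delta\subseteq\Pet_\Delta^{\mathrm{sing}}$. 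In the smooth cases, the same comparison shows the additional terms cannot destroy the full-rank upper-triangular structure. The final assertion, that $\Pet_\Delta^{\mathrm{sing}}$ is a union of smaller Peterson varieties, then follows from Theorem~\ref{thm.cell.closure}, which identifies $\overline{C_{y_K}\cap \Pet_\Delta}$ with the Peterson variety $\Pet_K$ in the Levi flag variety $L_K/B_K$. The main obstacle is the Lie-type case analysis in the singular half of the previous paragraph, particularly locating the correct pair $(\eta_1, \eta_2, \gamma)$ of sharing generators in each Lie type; the exceptional types E$_6$, E$_7$, E$_8$, F$_4$, G$_2$ may require computational verification to confirm the uniform principle.
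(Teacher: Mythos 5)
Your high-level strategy is the same as the paper's: compute patch ideals, apply the Jacobian criterion, exploit the minimality of $-\theta$ to produce a generator with no linear term, isolate the cominuscule subsets $K$ as the exceptional cases, and handle those by a rank-drop or direct-computation argument. However, there are several genuine gaps.

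First, your argument that $\dot w_0 B$ is smooth — ``lies in the open affine cell of the irreducible variety'' — does not establish smoothness of that particular point. A dense open set merely must \emph{intersect} the smooth locus; the specific point $\dot w_0 B$ could still a priori lie in the (closed, proper) singular locus. The paper closes this by noting $C_{w_0}\cap \Pet_\Delta$ is a single $Z_G(N)$-orbit (B\u{a}libanu), hence homogeneous, so if $\dot w_0 B$ were singular every point of the dense orbit, and hence of $\Pet_\Delta$, would be. You need some such homogeneity argument.

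Second, your classification ``$y_K(\theta)\in\Delta$ precisely when $K=\Delta\setminus\{\alpha_j\}$'' omits the cominuscule hypothesis. The correct statement (Lemma~\ref{lemma.cominuscule}) is that $y_K(\theta)\in\Delta$ iff $K=\Delta\setminus\{\beta\}$ \emph{and} the coefficient of $\beta$ in $\theta$ equals $1$. In types outside A these are not the same set: for instance in $\mathrm{G}_2$ no simple root is cominuscule, so the $-\theta$-generator argument already disposes of every $K\subsetneq\Delta$, contrary to what your characterization would predict.

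Third, and most substantially, the propagation arguments are not established. For the singular cominuscule cases, the paper's Proposition~\ref{prop.shared-linear} shows, via a careful analysis of the coefficient of $\beta$ in roots of $\Phi^-\setminus\Phi_K^-$, that $u_1$ contributes no new linear terms to the generators $\pi_{\eta_i}$; this is not a ``maximality of $\theta$'' argument, and moreover the paper's table (Figure~\ref{fig.cases}) uses $\gamma$ that is usually not $-\theta$ and $\eta_i$ that are not of the form $-\theta+\beta_i$ (compare the $\mathrm{D}_n$ and $\mathrm{E}$ rows), so your proposed ``natural choices'' do not suffice. For the smooth cases, you assert that $u_1$ cannot destroy the upper-triangular Jacobian structure, but the generators indexed by $\eta\in\Phi_K^+\setminus K$ can gain new linear contributions from $u_1$, and you give no control over these. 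The paper avoids this entirely by a $T^1$-orbit-closure argument: if $u_1\dot w B$ were singular, the closure of its $T^1$-orbit, which contains $\dot w B$, would be contained in the singular locus. You would either need to supply that torus argument or genuinely carry out the Jacobian comparison, which as stated is not done.

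Finally, the last sentence of the theorem (that the singular locus is a union of smaller Peterson varieties) requires identifying $\overline{C_{y_K}\cap\Pet_\Delta}$ with $\Pet_K$ — you correctly cite Theorem~\ref{thm.cell.closure} for this, which is fine.
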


Recall that $\theta$ denotes the highest root in the irreducible root system $\Phi$, which is the unique maximal element of $\Phi$ with respect to the partial order $\prec$ on $\Phi$ defined in Section~\ref{sec.alg.gps}; see~\cite[pg.~66, Table 2]{Humphreys-LieAlg} for a table of highest roots $\theta$ expressed in terms of the simple roots in each type.  We now give a sufficient criterion showing all points of certain Peterson-Schubert cells are singular in $\Pet_\Delta$.

\begin{prop}\label{prop.no-linear} Suppose $w\in W$ is admissible.  If $w(-\theta) \in \Phi^- \setminus \Delta^-$, then the Hessenberg--Schubert cell $C_w\cap \Pet_\Delta$ is contained in the singular locus of $\Pet_\Delta$.
\end{prop}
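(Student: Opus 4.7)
The plan is to apply the Jacobian Criterion (Lemma~\ref{lemma.Jacobian}) to the patch ideal $\ci_{u_1w}$ at an arbitrary point $u_1\dot wB\in C_w\cap \Pet_\Delta$, by exhibiting a single generator whose linear part vanishes identically. The key observation is that the hypothesis $w(-\theta)\in\Phi^-\setminus\Delta^-$ is exactly what ensures $-\theta$ itself appears as an index of the generators produced by Lemma~\ref{lemma.generators2}: by Corollary~\ref{cor.Peterson.cells} every admissible $w$ equals $y_K$ for some $K\subseteq\Delta$ and is therefore an involution, so $-\theta\in w(\Phi^-\setminus\Delta^-)$ if and only if $w^{-1}(-\theta)=w(-\theta)\in\Phi^-\setminus\Delta^-$. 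Under the hypothesis, $f_{-\theta}:=\pi_{-\theta}(\Ad(\mathbf{u}^{-1}u_1^{-1})(N))$ is therefore a generator of $\ci_{u_1w}$.

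Next, I would write $N':=\Ad(u_1^{-1})(N)=\sum_{\beta\in\Phi^+}n_\beta E_\beta$, where the sum is restricted to $\Phi^+$ because $u_1\in U$ and $\Ad(U)$ preserves $\fn$, so $n_\beta=0$ for $\beta\in\Phi^-\cup\{0\}$. Applying Lemma~\ref{lemma.adjoint} term by term to $f_{-\theta}=\sum_\beta n_\beta\,\pi_{-\theta}(\Ad(\mathbf{u}^{-1})(E_\beta))$, the constant part is $n_{-\theta}=0$ while the linear part equals
\[
-\sum_{\beta\in\Phi^+}\ \sum_{\substack{\gamma\in w(\Phi^-)\\ -\theta=\gamma+\beta}} n_\beta\, c_{\gamma,\beta}\, z_\gamma.
\]
The central point is that the inner indexing set is empty for every $\beta\in\Phi^+$: if $\gamma=-\theta-\beta$ were a root, then $-\theta-\gamma=\beta\in\Phi^+$ would make $\gamma\prec-\theta$ under the partial order on $\Phi$, contradicting that $-\theta$ is the unique minimum. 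Hence the linear part of $f_{-\theta}$ is identically zero.

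Since regular Hessenberg varieties are local complete intersections by~\cite[Corollary~3.8]{Abe-Fujita-Zeng2020}, Lemma~\ref{lemma.Jacobian}(1) applies to $\ci_{u_1w}$ and forces the origin of the patch (corresponding to $u_1\dot wB$) to be a singular point of $\Pet_\Delta$. As $u_1\in U\cap U_w$ was arbitrary, the whole Hessenberg--Schubert cell $C_w\cap \Pet_\Delta$ is contained in the singular locus. There is no significant obstacle in this plan; the essential insight is that the hypothesis is precisely what makes $-\theta$ an admissible generator index for the patch ideal, after which the vanishing of its linear part follows immediately from the extremality of $\theta$.
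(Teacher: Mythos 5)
Your proof is correct and follows essentially the same route as the paper: both isolate the generator of $\ci_{u_1 w}$ indexed by $-\theta$, use the minimality of $-\theta$ in the root poset to conclude its linear part vanishes, and then invoke the Jacobian criterion for local complete intersections. The only organizational difference is that you treat all points $u_1\dot w B$ uniformly by decomposing $\Ad(u_1^{-1})(N)$ over positive roots and applying Lemma~\ref{lemma.adjoint} once, whereas the paper first handles $\dot w B$ via Lemma~\ref{lemma.linear.term} and then separately analyzes the perturbation terms coming from $u_1$.
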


\begin{proof}  By Corollary~\ref{cor.Peterson.cells}, we may assume $w=y_K$ for some $K\subseteq \Delta$ and thus $w=w^{-1}$.  Our assumptions imply that $-\theta\in w(\Phi^-\setminus \Delta^-)$, so $\pi_{-\theta}(\Ad(\mathbf{u}^{-1})(N))$ is a generator of $\ci_{w}$.  Applying Lemma~\ref{lemma.linear.term} with $X_J=N$ (so $S_J = 0$), we get that (a constant multiple of) $z_\gamma$ is a summand of $\pi_{-\theta}(\Ad(\mathbf{u}^{-1})(N))$ if and only if $-\theta = \gamma +\alpha$ for some $\alpha\in \Delta$.  This is impossible since $-\theta$ is the minimal element of $\Phi$, so we conclude $\pi_{-\theta}(\Ad(\mathbf{u}^{-1})(N))$ has no linear term.  Thus $\dot wB$ is a singular point of $\Pet_\Delta$ by Lemma~\ref{lemma.Jacobian}.

To prove that every point $u_1 \dot wB$ of $C_{w}\cap \Pet_\Delta$ is singular, we consider the patch ideal $\ci_{u_1w}$.  By Lemma~\ref{lemma.generators2},  $\pi_{-\theta}(\Ad(\mathbf{u}^{-1}u_1^{-1})(N))$ is a generator of $\ci_{u_1w}$, and we claim that, as in the case above, this generator has no linear term. Since $u_1\in U$, we may write
\[
\Ad(u_1^{-1})(N) = N + \sum_{\beta \in \Phi^+\setminus \Delta} d_\beta E_{\beta}
\]
for some $d_\beta\in \C$.
Now
\[
\pi_{-\theta}(\Ad(\mathbf{u}^{-1}u_1^{-1})(N)) = \pi_{-\theta}(\Ad(\mathbf{u}^{-1})(N)) + \sum_{\beta\in \Phi^+\setminus \Delta} d_{\beta}\, \pi_{-\theta}(\Ad(\mathbf{u}^{-1})(E_{\beta})).
\]
Since each $\beta$ indexing a summand above is a positive root, $\beta\neq -\theta$ and $\delta_{-\theta, \beta}=0$. Thus by Lemma~\ref{lemma.adjoint} each summand of the linear term of $\pi_{-\theta}(\Ad(\mathbf{u}^{-1})(E_{\beta}))$ is $z_{\gamma}$ for $\gamma\in w(\Phi^-)$ such that $-\theta=\gamma+\beta$.  As $\beta\in \Phi^+$ and $-\theta$ is the lowest root, we once again see that no such $\gamma$ exists. Together with the conclusions of the previous paragraph, this proves the claim, and thus $u_1\dot wB$ is singular by Lemma~\ref{lemma.Jacobian}.
\end{proof}

Proposition~\ref{prop.no-linear} gives us a method for identifying Hessenberg--Schubert cells in the singular locus of $\Pet_\Delta$.  We now define an indexing set for the Hessenberg--Schubert cells to which Proposition~\ref{prop.no-linear} applies.

\begin{definition}\label{defn.Pet-no-linear}
Let $W$ be a irreducible (meaning its Dynkin diagram is connected) finite Weyl group.  We define $W^{**}\subseteq W$ as follows:
\begin{itemize}
\item If $W$ is of type $\mathrm{A}_n$, then $$W^{**}= \{y_K \mid K \subsetneq \Delta ,\, K \neq \Delta\setminus \{\beta\} \textup{ for any }\beta\in\Delta\}.$$
\item If $W$ is of type $\mathrm{B}_n$, then $$W^{**} = \{y_K \mid K \subsetneq \Delta ,\, K \neq\Delta\setminus \{\alpha_1\} \}.$$
\item If $W$ is of type $\mathrm{C}_n$, then $$W^{**} = \{y_K \mid K \subsetneq \Delta ,\, K \neq\Delta\setminus \{\alpha_n\} \}.$$
\item If $W$ is of type $\mathrm{D}_n$, then $$W^{**} = \{y_K \mid K \subsetneq \Delta ,\, K \neq\Delta\setminus \{\beta\} \textup{ for any } \beta\in\{\alpha_1,\alpha_{n-1},\alpha_n\}.$$
\item If $W$ is of type $\mathrm{E}_6$, then $$W^{**} = \{y_K \mid K \subsetneq \Delta ,\, K \neq\Delta\setminus \{\beta\} \textup{ for any } \beta\in\{\alpha_1,\alpha_6\}.$$
\item If $W$ is of type $\mathrm{E}_7$, then $$W^{**} = \{y_K \mid K \subsetneq \Delta ,\, K \neq\Delta\setminus \{\alpha_7\} \}.$$
\item If $W$ is of type $\mathrm{E}_8$, $\mathrm{F}_4$, or $\mathrm{G}_2$, then $$W^{**} = \{ y_K \mid K \subsetneq \Delta \}.$$
\end{itemize}
\end{definition}

\begin{lemma}\label{lemma.cominuscule}
Let $w\in W$ be admissible, and write $w=y_K$ for some $K\subsetneq \Delta$. Then $w(-\theta)=-\beta \in \Delta^-$ if and only if $K= \Delta\setminus \{\beta\}$ for some $\beta\in \Delta$ such that the coefficient of $\beta$ in the highest root $\theta$ is equal to 1.
Equivalently, for all $K\subseteq \Delta$, $w(-\theta) \in \Phi^-\setminus \Delta^-$ if and only if $w\in W^{**}$.
\end{lemma}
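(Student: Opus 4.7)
The approach is to analyze the action of $y_K$ on $\theta$ and prove the following reformulation: for $K\subsetneq\Delta$, the root $y_K(\theta)\in\Delta$ if and only if $K=\Delta\setminus\{\beta\}$ for some simple root $\beta$ with coefficient $c_\beta(\theta)=1$ in $\theta$. Since $w(-\theta)=-y_K(\theta)$, both assertions of the lemma will follow.

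First, I show $y_K(\theta)\in\Phi^+$ for $K\subsetneq\Delta$. Since $y_K$ is the longest element of $W_K$, we have $\inv(y_K)=\Phi_K^+$, so $y_K(\theta)\in\Phi^-$ if and only if $\theta\in\Phi_K^+$. By irreducibility of $\Phi$, every simple root has strictly positive coefficient in $\theta$, which forces $K=\Delta$. Second, because $y_K\in W_K$, the reflection formula $s_\gamma(\lambda)=\lambda-\langle\lambda,\gamma^\vee\rangle\gamma$ applied iteratively with $\gamma\in\Phi_K\subseteq\Z K$ yields $y_K(\theta)-\theta\in\Z K$. Hence the coefficient of any $\alpha\in\Delta\setminus K$ in $y_K(\theta)$ equals the coefficient $c_\alpha>0$ of $\alpha$ in $\theta$. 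The forward direction is then immediate: if $y_K(\theta)=\beta\in\Delta$, then the coefficient of every $\alpha\in\Delta\setminus K$ with $\alpha\neq\beta$ must vanish, forcing $\Delta\setminus K=\{\beta\}$, and the coefficient of $\beta$ in $y_K(\theta)$, which equals $c_\beta$, must be $1$.

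For the converse, assume $K=\Delta\setminus\{\beta\}$ with $c_\beta(\theta)=1$. By the previous paragraph, $y_K(\theta)\in\Phi^+$ and $y_K(\theta)=\beta+\eta$ for some $\eta\in\Z K$. To see that $\eta=0$, I invoke the cominuscule structure: the condition $c_\beta(\theta)=1$ means the $\beta$-coefficient of every root lies in $\{-1,0,1\}$, and hence the subspace $\fu_K=\bigoplus_{\gamma\in\Phi^+,\,c_\beta(\gamma)=1}\fg_\gamma$ is abelian and is an irreducible $\fl_K$-module with highest weight $\theta$; this is the classical Hermitian symmetric setup reviewed in~\cite{EHP2014}. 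For any simple root $\alpha\neq\beta$, $\beta-\alpha\notin\Phi$ (otherwise the resulting root would have a coefficient of the wrong sign), so $\beta$ is a lowest weight of $\fu_K$ and, by irreducibility, the unique lowest weight. As the longest element $y_K$ of $W_K$ sends the highest weight of any irreducible $\fl_K$-module to its lowest weight, we conclude $y_K(\theta)=\beta$. The main obstacle is precisely this converse step, where we need to pin down $y_K(\theta)$ exactly rather than only up to an element of $\Z K$; the cominuscule framework provides a uniform argument, though alternatively one can verify the claim case-by-case using~\cite[Table 2]{Humphreys-LieAlg}.

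Finally, I match the first claim with Definition~\ref{defn.Pet-no-linear} by reading off the simple roots of coefficient $1$ in the highest root from~\cite[Table 2]{Humphreys-LieAlg}: all simple roots in type $\mathrm{A}_n$; only $\alpha_1$ in type $\mathrm{B}_n$; only $\alpha_n$ in type $\mathrm{C}_n$; $\{\alpha_1,\alpha_{n-1},\alpha_n\}$ in type $\mathrm{D}_n$; $\{\alpha_1,\alpha_6\}$ in type $\mathrm{E}_6$; $\{\alpha_7\}$ in type $\mathrm{E}_7$; and none in types $\mathrm{E}_8$, $\mathrm{F}_4$, and $\mathrm{G}_2$. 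These match exactly the exclusions defining $W^{**}$ in each case, so $w(-\theta)\in\Phi^-\setminus\Delta^-$ if and only if $w\in W^{**}$, completing the proof.
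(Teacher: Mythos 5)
Your proof is correct. The forward direction (coefficient analysis: $y_K(\theta)-\theta\in\Z K$, so the coefficient of any $\alpha\in\Delta\setminus K$ in $y_K(\theta)$ is $c_\alpha>0$, forcing $\Delta\setminus K=\{\beta\}$ and $c_\beta=1$) is essentially identical to the paper's. Where you diverge is the converse: the paper invokes Lemma~3.14 and Corollary~3.15 of~\cite{EHP2014}, which are combinatorial statements about the length of $y_K s_\beta w_0$ and the inversion set $\Inv(w_0 s_\beta y_K)$, to pin down $y_K(\theta)=\beta$ via a computation with inversion sets. You instead argue representation-theoretically: under the hypothesis $c_\beta(\theta)=1$, the nilradical $\fu_K$ is abelian and is an irreducible $\fl_K$-module with highest weight $\theta$; since $\beta-\alpha\notin\Phi$ for every simple $\alpha\neq\beta$, $\beta$ is its lowest weight; and $y_K$, being the longest element of the Weyl group of the Levi, sends the highest weight of an irreducible $\fl_K$-module to the lowest weight, so $y_K(\theta)=\beta$. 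Both arguments rest on the same Hermitian/cominuscule framework, but your route is more conceptual and avoids the length-and-inversion bookkeeping; its cost is that you rely on a deeper structural input (irreducibility of the abelian nilradical as an $\fl_K$-module), which you appropriately attribute to the classical theory and to~\cite{EHP2014}. Your final matching of the list of coefficient-$1$ simple roots in $\theta$ against Definition~\ref{defn.Pet-no-linear} is correct in every type.
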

\begin{proof} 
The second statement of the lemma follows immediately from the first using the formulas for $\theta$ appearing in \cite[\S 12 Table 2]{Humphreys-LieAlg} and the fact that $w(-\theta)\in \Phi^+$ if and only if $w=w_0$ (which is the case where $K=\Delta$).

Write $\theta = \sum_{\alpha\in \Delta} c_\alpha \alpha$ for some $c_\alpha\in \Z_{>0}$.   
Since $w(K) = K^-$ we have that
\begin{eqnarray*}
w(-\theta) &=& w \left( - \sum_{\alpha\in K}c_\alpha \alpha   \right) - \sum_{\alpha\in \Delta\setminus K} c_\alpha w(\alpha)\\
 &=& \sum_{\alpha\in K} c_{w(\alpha)} \alpha  - \sum_{\alpha\in \Delta\setminus K} c_\alpha w(\alpha).
\end{eqnarray*}
Note that, for all $\alpha\in \Delta\setminus K$, $$w(\alpha)=\alpha+\sum_{\gamma\in K} e_\gamma \gamma$$ for some positive constants $e_\gamma$, as $w\in W_K$.  Combining this fact with the equation above, if we write
\[
w(-\theta) = \sum_{\alpha\in \Delta} d_\alpha \alpha 
\] 
for some $d_\alpha\in \Z_{\leq 0}$, then $d_\alpha= -c_\alpha<0$ for all $\alpha\in \Delta\setminus K$. Thus if $w(-\theta)\in \Delta^-$, then $|\Delta \setminus K| =1$, and we may assume that $\Delta \setminus K = \{\beta\}$.  This implies $d_\beta = -1$, which forces $c_\beta=1$ and proves the forward direction of the first statement.  

Suppose now that $K= \Delta \setminus \{\beta\}$ and $c_\beta=1$.  In this case, the maximal parabolic subalgebra $\fp_K$ associated to $K$ is said to be of \emph{Hermitian type} (or of \emph{cominuscule type}) in the sense of~\cite[Lemma 2.2]{EHP2014}. Applying Lemma 3.14 of \emph{loc.~cit.}~to $s_\beta$, we get that $ws_\beta w_0$ has length $|\Phi^+\setminus \Phi_K^+|-1$.  This implies, in particular, that 
\[
\Inv(w_0s_\beta w) = \{\gamma \in \Phi^+\setminus \Phi_K^+ \mid \gamma \prec \theta, \gamma\neq \theta \} =  \Phi^+\setminus (\Phi_K^+ \sqcup\{\theta\}).
\]  
As $\Inv(s_\beta) = \{ \beta \}$,  Corollary 3.15 of \emph{loc.~cit.} applied to $s_\beta$ implies that
\[
w \left( \Phi^+\setminus (\Phi_K^+ \sqcup\{\theta\}) \right) = \Phi^+ \setminus (\Phi_K^+ \sqcup \{\beta\}).
\]
Since $w(\Phi^+ \setminus \Phi_K^+) = \Phi^+\setminus \Phi_K^+$
 this proves $w(\theta) = \beta\in \Delta$, as desired.
\end{proof}

Note the proof of Lemma~\ref{lemma.cominuscule} yields a statement that could be of independent interest, namely yet another condition equivalent to the statements of~\cite[Lemma 2.2]{EHP2014} characterizing all maximal parabolic subgroups of Hermitian type.

\begin{corollary} \label{cor.cominuscule} Suppose $K = \Delta \setminus \{\beta\}$. The parabolic subgroup $\fp_K$ is of Hermitian \textup{(}or cominuscule\textup{)} type if and only if $y_K(\theta)=\beta$.
\end{corollary}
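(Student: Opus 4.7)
The plan is to extract the corollary directly from Lemma~\ref{lemma.cominuscule} combined with the classical characterization of maximal parabolic subalgebras of Hermitian type. Recall the standard fact (see~\cite[Lemma 2.2]{EHP2014}) that for $K=\Delta\setminus\{\beta\}$, the parabolic $\fp_K$ is of Hermitian type if and only if the coefficient $c_\beta$ of $\beta$ in the expansion $\theta=\sum_{\alpha\in\Delta}c_\alpha\alpha$ of the highest root equals $1$. Once this characterization is in hand, the corollary is almost a tautological consequence of Lemma~\ref{lemma.cominuscule}.

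For the forward direction, I would assume $\fp_K$ is of Hermitian type, so that $c_\beta = 1$. Then the second paragraph of the proof of Lemma~\ref{lemma.cominuscule} (where the arguments of~\cite[Lemma 3.14, Cor.~3.15]{EHP2014} are applied to $s_\beta$) shows directly that $y_K(\theta) = \beta$, which is exactly the conclusion sought. Note that no new argument is needed here; one is simply repackaging the content already proved.

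For the reverse direction, suppose $y_K(\theta) = \beta$, equivalently $y_K(-\theta) = -\beta \in \Delta^-$. By Lemma~\ref{lemma.cominuscule}, this forces $K = \Delta\setminus\{\beta'\}$ for some $\beta' \in \Delta$ with $c_{\beta'} = 1$. Matching this against the hypothesis $K = \Delta\setminus\{\beta\}$ forces $\beta' = \beta$, hence $c_\beta = 1$, so $\fp_K$ is of Hermitian type.

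There is no real obstacle here; the lemma has already done the technical work, and the corollary is essentially a repackaging of it under the standard dictionary between the condition ``$c_\beta = 1$'' and the phrase ``$\fp_K$ is of Hermitian type.'' The only point deserving care is the citation, ensuring that the equivalence between the coefficient condition and the Hermitian/cominuscule condition is invoked from a specific reference so the reader does not have to reconstruct it.
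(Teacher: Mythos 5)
Your proposal is correct and matches the paper's approach exactly: the paper does not supply a separate proof but simply observes that the corollary is read off directly from the proof of Lemma~\ref{lemma.cominuscule} together with the standard characterization (from~\cite[Lemma 2.2]{EHP2014}) of Hermitian type as $c_\beta=1$. Your two directions are precisely the two directions established in that lemma's proof, so nothing is missing.
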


With Lemma~\ref{lemma.cominuscule} in hand, we return to our study of $\Pet_\Delta$.  In light of Proposition~\ref{prop.no-linear}, we focus our attention on admissible elements $w\in (W^*\setminus W^{**})$.  Recall that, by Lemma~\ref{lemma.generators}, the generators $\pi_\eta(\Ad(\mathbf{u}^{-1})(N))$ of the patch ideal $\ci_w$ are indexed by roots $\eta\in w(\Phi^- \setminus \Delta^-)$.  Since $w=y_K$ we get 
\begin{eqnarray} \label{eqn.w.action.neg}
w(\Phi^-) = \Phi_K^+ \sqcup (\Phi^- \setminus \Phi_K^-)
\end{eqnarray} 
and thus by Lemma~\ref{lemma.cominuscule}, the indexing set for the generators of $\ci_w$ in this case is
\begin{eqnarray}\label{eqn.gens.cominuscule}
w(\Phi^- \setminus \Delta^-) = \left( \Phi^- \setminus (\Phi_K^- \sqcup\{-\theta\}) \right) \sqcup (\Phi_K^+\setminus K).
\end{eqnarray}
This decomposition of the set $w(\Phi^-\setminus \Delta^-)$ is used several times in the arguments below.

We now give an additional sufficient criterion showing all points of certain Peterson-Schubert cells are singular in $\Pet_\Delta$. %\textcolor{blue}{ are singular.}

\begin{prop}\label{prop.shared-linear} Suppose $w\in W$ is admissible and $w(-\theta)=-\beta \in  \Delta^-$.  Let $K=\Delta\setminus\{\beta\}$.  Now suppose there exist distinct roots $\eta_1, \eta_2\in \Phi^- \setminus (\Phi_K^-\cup\{-\theta\})$ such that 
\begin{enumerate}
\item There exist $\alpha^{(1)}, \alpha^{(2)}\in \Delta$ such that $\eta_1-\alpha^{(1)} = \eta_2-\alpha^{(2)}$, and
\item $\eta_1-\alpha\notin \Phi$ for all $\alpha\in \Delta$ such that $\alpha\neq \alpha^{(1)}$ and similarly for $\eta_2$.
\end{enumerate}
Then the Hessenberg--Schubert cell $C_w\cap \Pet_\Delta$ is contained in the singular locus of $\Pet_\Delta$.
\end{prop}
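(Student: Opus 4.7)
By Lemma~\ref{lemma.generators2}, the patch ideal $\ci_{u_1w}$ at any point $u_1\dot{w}B\in C_w\cap\Pet_\Delta$ is generated by the polynomials $f_\eta:=\pi_\eta(\Ad(\mathbf{u}^{-1}u_1^{-1})(N))$ for $\eta\in w(\Phi^-\setminus\Delta^-)$.  My plan is to show that the linear terms of $f_{\eta_1}$ and $f_{\eta_2}$ are both nonzero scalar multiples of the single variable $z_{\gamma_0}$, where $\gamma_0:=\eta_1-\alpha^{(1)}=\eta_2-\alpha^{(2)}$.  Since $\gamma_0+\alpha^{(i)}=\eta_i\in\Phi$, the structure constants $c_{\gamma_0,\alpha^{(i)}}$ are nonzero, so the combination $c_{\gamma_0,\alpha^{(2)}}f_{\eta_1}-c_{\gamma_0,\alpha^{(1)}}f_{\eta_2}$ is a nontrivial linear combination of generators with no linear term, making $u_1\dot{w}B$ singular by Lemma~\ref{lemma.Jacobian}(2).

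First I would invoke Lemma~\ref{lemma.cominuscule} to conclude from $w(-\theta)=-\beta\in\Delta^-$ that the parabolic $\fp_K$ is of Hermitian type, so $\fn_K$ is abelian and every root of $\Phi$ has $\beta$-coefficient in $\{-1,0,1\}$.  Since $\eta_i\in\Phi^-\setminus\Phi_K^-$ has $\beta$-coefficient $-1$, the hypothesis $\eta_i-\alpha^{(i)}\in\Phi$ forces $\alpha^{(i)}\in K$ (otherwise $\gamma_0$ would have $\beta$-coefficient $-2$).  Consequently $\gamma_0$ has $\beta$-coefficient $-1$, placing $\gamma_0\in\Phi^-\setminus\Phi_K^-\subset w(\Phi^-)$ by~\eqref{eqn.w.action.neg}.

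Next I would translate the Peterson condition on $u_1$ into an explicit constraint.  Expanding $\Ad(u_1^{-1})(N)=N+\sum_{\delta\in\Phi^+\setminus\Delta}d_\delta E_\delta$ and using that $u_1\in U\cap \dot{w}U_-\dot{w}^{-1}$ is a product of root subgroups $U_\gamma$ with $\gamma\in\Phi_K^+$ (hence $u_1\in L_K$), the nonzero $d_\delta$ can only occur for $\delta\in(\Phi_K^+\setminus K)\cup(\Phi^+\setminus(\Phi_K^+\cup\{\beta\}))$.  A direct root-space computation using $w(\Phi^+)=\Phi_K^-\sqcup(\Phi^+\setminus\Phi_K^+)$, $w(-\beta)=-\theta\in\Phi^-\setminus\Phi_K^-$, and the fact that $\{-y_K(\alpha):\alpha\in K\}=K$ yields the key identity
\[
\Ad(\dot{w})(H_\Delta)\cap\bigoplus_{\gamma\in\Phi_K^+}\fg_\gamma \;=\; \bigoplus_{\alpha\in K}\fg_\alpha.
\]
Combined with the observation that the simple-root components of $\Ad(u_1^{-1})(N_K)$ remain equal to $E_\alpha$ for each $\alpha\in K$ (because conjugation by $u_1$ strictly raises height inside $\fl_K$), the Peterson membership $\Ad(u_1^{-1})(N)\in\Ad(\dot{w})(H_\Delta)$ forces $d_\delta=0$ for every $\delta\in\Phi_K^+\setminus K$.

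To conclude, I would apply Lemmas~\ref{lemma.adjoint} and~\ref{lemma.linear.term} to read off each linear summand of $f_{\eta_i}$ as $-c_{\gamma,\delta'}z_\gamma$, with $\gamma\in w(\Phi^-)$, $\eta_i=\gamma+\delta'$, and $\delta'$ either a simple root or a root with $d_{\delta'}\neq 0$.  For $\delta'\in\Delta$, condition~(2) forces $\delta'=\alpha^{(i)}$ and $\gamma=\gamma_0$.  For $\delta'\in\Phi^+\setminus\Delta$ with $d_{\delta'}\neq 0$, the previous step restricts $\delta'\in\Phi^+\setminus(\Phi_K^+\cup\{\beta\})$, so $\delta'$ has $\beta$-coefficient $1$; then $\gamma=\eta_i-\delta'$ would have $\beta$-coefficient $-2$, impossible by the Hermitian hypothesis.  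Thus the linear part of $f_{\eta_i}$ is exactly $-c_{\gamma_0,\alpha^{(i)}}z_{\gamma_0}$, completing the argument.  The hard part will be establishing the intersection identity displayed above and extracting the vanishing $d_\delta=0$ for $\delta\in\Phi_K^+\setminus K$ from the Peterson membership; everything else is a mechanical unpacking of the lemmas from Section~\ref{sec.patches}.
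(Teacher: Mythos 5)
Your proposal is correct and follows essentially the same strategy as the paper: show that both $f_{\eta_1}$ and $f_{\eta_2}$ have the same single linear term $z_{\gamma_0}$ (even after allowing $u_1\neq e$), then invoke Lemma~\ref{lemma.Jacobian}(2). The one place you diverge is in how you prove that the ``correction'' coefficients $d_\delta$ vanish outside $\Phi^+\setminus(\Phi_K^+\cup\{\beta\})$: you observe $u_1\in U\cap\dot wU_-\dot w^{-1}\subseteq L_K$, split $\Ad(u_1^{-1})(N)$ into its $\fl_K$- and $\fu_K$-parts, and use the intersection identity $\Ad(\dot w)(H_\Delta)\cap\bigoplus_{\gamma\in\Phi_K^+}\fg_\gamma=\bigoplus_{\alpha\in K}\fg_\alpha$ to kill the $\fl_K\cap\fn$ corrections; the paper instead applies the Peterson membership directly, computing $w(\Phi^+\sqcup\Delta^-)\cap(\Phi^+\setminus\Delta)$ using $w=w^{-1}$, $w(K^-)=K$, and $w(-\beta)=-\theta$. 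Both derivations reach the same constraint and both then close with the same $\beta$-coefficient argument from the cominuscule structure, so the differences are cosmetic rather than substantive.
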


\begin{proof}
Supposing our hypotheses hold, $\eta_1, \eta_2\in w(\Phi^-\setminus\Delta^-)$ by the above discussion, so $\pi_{\eta_1}(\Ad(\mathbf{u}^{-1})(N))$ and $\pi_{\eta_2}(\Ad(\mathbf{u}^{-1})(N))$ are both generators of $\ci_w$. Lemma~\ref{lemma.linear.term} now implies that $\pi_{\eta_1}(\Ad(\mathbf{u}^{-1})(N))$ and $\pi_{\eta_2}(\Ad(\mathbf{u}^{-1})(N))$ both have precisely one linear term, namely a constant multiple of $z_\epsilon$ for $\epsilon = \eta_1-\alpha^{(1)}=\eta_2-\alpha^{(2)}$.  Thus, $\dot w B$ is a singular point of $\Pet_\Delta$ by Lemma~\ref{lemma.Jacobian}.  

To prove that every point $u_1 \dot wB$ of $C_{w}\cap \Pet_\Delta$ is singular, we consider the patch ideal $\ci_{u_1w}$.  By Lemma~\ref{lemma.generators2}, $\pi_{\eta_1}(\Ad(\mathbf{u}^{-1}u_1^{-1})(N))$ and $\pi_{\eta_2}(\Ad(\mathbf{u}^{-1}u_1^{-1})(N))$ are generators of $\ci_{u_1w}$ and we claim that, as in the case above, these generators both have a unique linear term $z_\epsilon$.  Since $u_1\in U$ we may write
\begin{eqnarray}\label{eqn.Adu1}
\Ad(u_1^{-1})(N) = N + \sum_{\tau\in \Phi^+ \setminus \Delta} d_\tau E_\tau
\end{eqnarray}
for some $d_\tau\in \mathbb{C}$.  Since $u_1\dot wB\in\Pet_\Delta$, $\Ad(\dot w u_1^{-1})(N)\in H_\Delta$, so $w(\tau)\in(\Phi^+\sqcup\Delta^-)$ whenever $d_\tau\neq0$ (as $H_\Delta$ is spanned by $\Phi^+\sqcup\Delta^-$).

If $d_\tau\neq0$, then, by the previous statement, since $w=w^{-1}$, $\tau\in w(\Delta^-) \cap (\Phi^+\setminus \Delta)$ or $\tau\in w(\Phi^+) \cap (\Phi^+\setminus \Delta)$.  As $w(K^-)=K$ and $w(-\beta)=-\theta \in \Phi^-$, we have $w(\Delta^-) \cap (\Phi^+\setminus \Delta) = \emptyset$.  Hence,
\begin{eqnarray}\label{eqn.tau.formula}
d_\tau\neq 0 \Rightarrow \tau \in  w(\Phi^+)\cap (\Phi^+\setminus \Delta)  \Rightarrow  \tau\in \Phi^+\setminus \Phi_K^+ \textup{ and }  \tau\neq \beta
\end{eqnarray}
since $w(\Phi_K^+) = \Phi_K^-$, $w(\Phi^+\setminus \Phi_K^+) = \Phi^+\setminus \Phi_K^+$, and $K= \Delta\setminus \{\beta\}$.

Suppose $\eta \in \Phi^- \cap w(\Phi^- \setminus \Delta^-)$.  By~\eqref{eqn.gens.cominuscule} we have $\eta\in\Phi^- \setminus \Phi_K^-$. Using~\eqref{eqn.Adu1} and~\eqref{eqn.tau.formula} we may write the corresponding generator of $\ci_{u_1w}$ as
\begin{eqnarray}\label{eqn.generator}
\pi_\eta (\Ad(\mathbf{u}^{-1}u_1^{-1})(N)) = \pi_\eta (\Ad(\mathbf{u}^{-1})(N)) + \sum_{\substack{\tau\in \Phi^+ \setminus \Phi_K^+\\ \tau \neq \beta}} d_\tau \pi_\eta(\Ad(\mathbf{u}^{-1})(E_\tau)). 
\end{eqnarray}
Note that $\delta_{\eta,\tau}=0$ since the two roots cannot be equal as $\tau\in \Phi^+$ and $\eta\in \Phi^-$.   Lemma~\ref{lemma.adjoint} implies that every linear summand of $\pi_\eta(\mathbf{u}^{-1}E_\tau \mathbf{u})$ is $c_{\gamma, \tau}z_\gamma$ for some $\gamma\in w(\Phi^-)$ such that $\eta = \gamma+\tau$.  We claim that the set of such $\gamma$ is empty.  First note that $w(\Phi^-) = \Phi_K^+ \sqcup (\Phi^- \setminus \Phi_K^-)$.  If $\gamma\in \Phi_K^+$ then  both $\gamma$ and $\tau$ are positive roots so $\eta = \gamma+\tau \in \Phi^+$, a contradiction. Therefore we may assume $\gamma\in \Phi^- \setminus \Phi_K^-$.  As the coefficient of $\beta$ in $-\theta$ is $-1$, the same is true for every root in $\Phi^-\setminus \Phi_K^-$ when written as a sum of negative simple roots (and similarly, every root in $\Phi^+\setminus \Phi_K^+$ has coefficient of $\beta$ equal to $1$ when written as a sum of simple roots).  Therefore the coefficient of $\beta$ in both $\gamma$ and $\eta$ is $-1$ when each is written as a sum of negative simple roots.  We conclude that $\tau= \eta-\gamma$ cannot have $\beta$ as a summand, contradicting the conclusion of~\eqref{eqn.tau.formula} that $\tau\in \Phi^+\setminus \Phi_K^+$. This proves the claim.

The previous paragraph shows that for all $\tau\in \Phi^+\setminus \Phi_K^+$ and all $\eta\in \Phi^-\setminus \Phi_K^-$, the polynomial $\pi_\eta(\mathbf{u}^{-1}E_\tau \mathbf{u})$ has no linear term.  Using~\eqref{eqn.generator} we conclude that the linear term of $\pi_\eta (\mathbf{u}^{-1}u_1 N u_1 \mathbf{u})$ is equal to that of $\pi_\eta (\mathbf{u}^{-1} N \mathbf{u})$.  In particular, this holds for $\eta_1$ and $\eta_2$, so both $\pi_{\eta_1}(\Ad(\mathbf{u}^{-1}u_1^{-1})(N))$ and $\pi_{\eta_2}(\Ad(\mathbf{u}^{-1}u_1^{-1})(N))$ have the unique linear term $z_\epsilon$.  Hence $u_1\dot w B$ is a singular point of $\Pet_\Delta$ by Lemma~\ref{lemma.Jacobian}.
\end{proof}

\begin{corollary}\label{cor.pet-singular}
Suppose $w\in W^*$.  Then every point of $C_{w} \cap \Pet_\Delta$ is singular in $\Pet_\Delta$.
\end{corollary}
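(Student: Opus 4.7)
The plan is to split the argument based on whether $w\in W^{**}$ (Definition~\ref{defn.Pet-no-linear}). A direct comparison of the two definitions shows $W^{**}\subseteq W^*$ in every Lie type. When $w\in W^{**}$, Lemma~\ref{lemma.cominuscule} gives $w(-\theta)\in\Phi^-\setminus\Delta^-$, and Proposition~\ref{prop.no-linear} immediately implies every point of $C_w\cap\Pet_\Delta$ is singular. Hence only the finite complement $W^*\setminus W^{**}$ will require separate treatment.

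Comparing the two definitions, an element of $W^*\setminus W^{**}$ is $w=y_K$ with $K=\Delta\setminus\{\beta\}$ where $\beta$ is a cominuscule simple root (coefficient one in $\theta$, so $\fp_K$ is of Hermitian type) that is not among those excluded from $W^*$ in Definition~\ref{defn.Pet-singularities}. This yields the finite list of pairs $(\Phi,\beta)$ to handle: type $\mathrm{A}_n$ with $\beta=\alpha_i$ for $2\le i\le n-1$; type $\mathrm{C}_n$ with $\beta=\alpha_n$; type $\mathrm{D}_n$ with $\beta\in\{\alpha_1,\alpha_{n-1},\alpha_n\}$; type $\mathrm{E}_6$ with $\beta\in\{\alpha_1,\alpha_6\}$; and type $\mathrm{E}_7$ with $\beta=\alpha_7$. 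For each such pair, the plan is to exhibit explicit roots $\eta_1,\eta_2\in\Phi^-\setminus(\Phi_K^-\cup\{-\theta\})$ together with simple roots $\alpha^{(1)},\alpha^{(2)}\in\Delta$ that verify the hypotheses of Proposition~\ref{prop.shared-linear}; singularity of every point of $C_w\cap\Pet_\Delta$ then follows directly.

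Explicit choices cover the classical types. For type $\mathrm{A}_n$ take $\eta_1=-\theta+\alpha_1$ and $\eta_2=-\theta+\alpha_n$ with shared predecessor $\epsilon=-\theta$. For type $\mathrm{D}_n$ with $\beta=\alpha_1$ take $\eta_1=-(\epsilon_1-\epsilon_n)$ and $\eta_2=-(\epsilon_1+\epsilon_n)$ with $\alpha^{(1)}=\alpha_n$ and $\alpha^{(2)}=\alpha_{n-1}$, giving $\epsilon=-(\epsilon_1+\epsilon_{n-1})$; the subcases $\beta\in\{\alpha_{n-1},\alpha_n\}$ follow by analogous constructions (alternatively via the $\mathrm{D}_n$ outer automorphism swapping $\alpha_{n-1}\leftrightarrow\alpha_n$). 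For type $\mathrm{C}_n$ with $\beta=\alpha_n$ take $\eta_1=-2\epsilon_2$ (via $\alpha_1$) and $\eta_2=-(\epsilon_1+\epsilon_3)$ (via $\alpha_2$), with $\epsilon=-(\epsilon_1+\epsilon_2)$. The exceptional types $\mathrm{E}_6$ and $\mathrm{E}_7$ will be treated by analogous explicit computations in the root system, verified by a computer algebra system if necessary.

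The main obstacle is this case-by-case verification, particularly in the exceptional types. Conceptually, the existence of suitable $\eta_1,\eta_2,\epsilon$ reflects the fact that in each remaining pair $(\Phi,\beta)$ the associated cominuscule root poset is not a chain, so one can locate a negative root $\epsilon$ with at least two simple-root covers $\eta_1,\eta_2$, each of which has $\epsilon$ as its unique simple-root cover. This is precisely what fails for the pairs excluded from $W^*$ (the type $\mathrm{A}$ endpoints and the type $\mathrm{B}$ cominuscule), where the corresponding cells turn out to be smooth.
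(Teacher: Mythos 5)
Your approach is essentially identical to the paper's: split on $W^{**}$ versus $W^*\setminus W^{**}$, apply Proposition~\ref{prop.no-linear} together with Lemma~\ref{lemma.cominuscule} for $W^{**}$, and apply Proposition~\ref{prop.shared-linear} with explicit root-pair witnesses for $W^*\setminus W^{**}$; your explicit choices in types $\mathrm{A}_n$, $\mathrm{C}_n$, and $\mathrm{D}_n$ with $\beta=\alpha_1$ coincide with the paper's, and your closing remark that the relevant pairs are exactly those where the cominuscule poset $\Phi^+\setminus\Phi_K^+$ is not a chain echoes the paper's observation. The only difference is that the paper supplies a complete table (Figure~\ref{fig.cases}) covering the $\mathrm{D}_n$ ($\beta\in\{\alpha_{n-1},\alpha_n\}$), $\mathrm{E}_6$, and $\mathrm{E}_7$ cases that you defer to analogy or computation.
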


\begin{proof}
If $w\in W^{**}$, then every point of $C_{w} \cap \Pet_\Delta$ is singular by Lemma~\ref{lemma.cominuscule} and Proposition~\ref{prop.no-linear}.

If $w\in (W^*\setminus W^{**})$, then the table in Figure~\ref{fig.cases} shows that $w$ satisfies the conditions of Proposition~\ref{prop.shared-linear}, so every point of $C_w\cap \Pet_\Delta$ is singular.

\begin{figure}
\[
\begin{array} {c|c|c|c|c|c|c}
\Phi & \beta & \gamma & \eta_1 & \alpha^{(1)} & \eta_2 & \alpha^{(2)} \\  \hline
A_{n}\,(n\geq 3)  &  \alpha_j,\,2\leq j \leq n-1 &-\theta & \gamma+\alpha_1 & \alpha_1 & \gamma+\alpha_{n} & \alpha_n \\
C_n\, (n\geq 3) & \alpha_n & -\theta + \alpha_1 & \gamma+\alpha_1 & \alpha_1 & \gamma+  \alpha_2 & \alpha_2 \\
D_n\, (n\geq 4) &  \alpha_1   & -\alpha_1 -\cdots - \alpha_n &  \gamma+ \alpha_{n-1} & \alpha_{n-1} & \gamma+ \alpha_n & \alpha_n \\
D_n\, (n\geq 4) & \alpha_{n-1}   & -\theta+\alpha_2 &  \gamma+\alpha_1 & \alpha_1 & \gamma+\alpha_3 & \alpha_3 \\
D_n\, (n\geq 4) &  \alpha_{n}  & -\theta+\alpha_2 &  \gamma+\alpha_1 & \alpha_1 & \gamma+\alpha_{3} & \alpha_{3} \\
E_6 &  \alpha_1 & -\theta+\alpha_2+\alpha_4 & \gamma+\alpha_3 & \alpha_3 & \gamma+\alpha_5 & \alpha_5 \\
E_6 &  \alpha_6 & -\theta+\alpha_2+\alpha_4 & \gamma+\alpha_5 & \alpha_5 & \gamma+\alpha_3 & \alpha_3 \\
E_7 & \alpha_7 &-\theta + \alpha_1 + \alpha_3 +\alpha_4 & \gamma+\alpha_2 & \alpha_2 & \gamma+\alpha_5 & \alpha_5 \\
\end{array}
\]
\caption{Cases arising in the proof of Corollary~\ref{cor.pet-singular}.}
\label{fig.cases}
\end{figure}
In order to confirm the accuracy of the table, the interested reader will find the diagrams displaying the partially ordered sets $\Phi^+\setminus \Phi_K^+$ appearing in the appendix of~\cite{EHP2014} helpful; each case from the table in Figure~\ref{fig.cases} corresponds precisely to a case in which the poset structure on $\Phi^+\setminus \Phi_K^+$ is not a chain.
\end{proof}

The proof of Theorem~\ref{thm.Pet-singularities} is almost complete; we now need a computation in one specific case.

\begin{prop}\label{prop.pet-typeB-smooth}
Let $W$ be the Weyl group of type $B_n$ and $w=y_K$, where $K=\Delta\setminus\{\alpha_1\}$.  Then $\dot w B$ is a smooth point of $\Pet_\Delta$.
\end{prop}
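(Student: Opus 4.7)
The plan is to apply the Jacobian criterion from Lemma~\ref{lemma.Jacobian} to the patch ideal $\ci_w$ at the origin. The highest root in type $\mathrm{B}_n$ is $\theta=\alpha_1+2\alpha_2+\cdots+2\alpha_n$, which has coefficient $1$ at $\alpha_1$, so Lemma~\ref{lemma.cominuscule} gives $w(-\theta)=-\alpha_1\in\Delta^-$; the decomposition~\eqref{eqn.gens.cominuscule} then indexes the rows of the Jacobian $\mathsf{J}$ by
\[
(\Phi_K^+\setminus K)\;\sqcup\;\bigl(\Phi^-\setminus(\Phi_K^-\cup\{-\theta\})\bigr)
\]
and its columns by $w(\Phi^-)=\Phi_K^+\sqcup(\Phi^-\setminus\Phi_K^-)$.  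I will order rows and columns along these splittings and argue that $\mathsf{J}$ is block diagonal with both diagonal blocks of full row rank, whence $\mathsf{J}$ has rank equal to the codimension $n^2-n$ of $\Pet_\Delta$ in $\cb$ and Lemma~\ref{lemma.Jacobian} yields smoothness at $\dot wB$.

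Off-diagonal vanishing is controlled by the $\alpha_1$-coefficient.  By Lemma~\ref{lemma.linear.term} a nonzero $(\eta,\gamma)$-entry of $\mathsf{J}$ requires $\eta=\gamma+\alpha$ for some $\alpha\in\Delta$, and comparing $\alpha_1$-coefficients forces $\alpha=\alpha_1$ in the upper-right block and forces $\alpha$ to have $\alpha_1$-coefficient $-1$ (hence not simple) in the lower-left block.  Since every $\eta\in\Phi_K^+$ has zero $\alpha_1$-coefficient, $\eta-\alpha_1$ has mixed-sign simple-root coefficients and so is not a root, completing the upper-right vanishing.  The upper-left block then involves only roots of $\Phi_K$ and, by Lemma~\ref{lemma.generators} applied inside $\fl_K$, is precisely the Jacobian of the patch of the type $\mathrm{B}_{n-1}$ Peterson variety $\Pet_K\subseteq L_K/B_K$ at the Weyl flag $\dot y_K B_K$.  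Since $y_K$ is the longest element of $W_K$, Proposition~\ref{prop.affine.paving} shows $C_{y_K}\cap\Pet_K\simeq\C^{|K|}$ is an open dense affine cell of $\Pet_K$, so $\dot y_K B_K$ is a smooth point and this block has full row rank.

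For the lower-right block, I plan to verify by a direct case-check on the shape of type $\mathrm{B}_n$ roots that $\Phi^-\setminus\Phi_K^-$ forms a single chain under the covering relation $\gamma\lessdot\eta\iff\eta-\gamma\in\Delta$.  Writing $\beta_j:=\epsilon_j-\epsilon_1$, $\zeta:=-\epsilon_1$, and $\delta_j:=-\epsilon_1-\epsilon_j$ for $2\leq j\leq n$, the chain takes the explicit form
\[
\delta_2\lessdot\delta_3\lessdot\cdots\lessdot\delta_n\lessdot\zeta\lessdot\beta_n\lessdot\beta_{n-1}\lessdot\cdots\lessdot\beta_2,
\]
so each of the $2n-2$ rows (indexed by $\eta\neq\delta_2=-\theta$) has exactly one nonzero entry, located in the column of its unique lower cover, and these covers are pairwise distinct; the lower-right block thus has full row rank $2n-2$, with the lone unused column corresponding to $z_{-\alpha_1}$, a one-dimensional free tangent direction.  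I expect the main technical obstacle to be this case-check establishing the chain structure, which depends on features specific to $\mathrm{B}_n$ (notably the short roots $\epsilon_i$) and underlies why the proposition is type-specific, consistent with the asymmetry recorded in Definitions~\ref{defn.Pet-singularities} and~\ref{defn.Pet-no-linear}.
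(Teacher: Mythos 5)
Your proposal is correct and follows essentially the same approach as the paper's proof: both split the Jacobian along the decomposition $w(\Phi^-) = \Phi_K^+ \sqcup (\Phi^-\setminus\Phi_K^-)$, identify the $\Phi_K$-block with the Jacobian of the $\mathrm{B}_{n-1}$ Peterson variety at its longest element (full rank because that Weyl flag is a smooth point), and show the $(\Phi^-\setminus\Phi_K^-)$-block is supported on a single chain of length $2n-1$, giving a diagonal matrix with one extra zero column. Your minor variations — verifying that both off-diagonal blocks vanish (the paper only needs the lower-left one to vanish) and deducing smoothness at $\dot y_K B_K\in\Pet_K$ from the open dense affine cell $C_{y_K}\cap\Pet_K\simeq\C^{|K|}$ rather than from B\u{a}libanu's dense $Z_{L_K}(N)$-orbit — are both valid and do not change the substance of the argument.
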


\begin{proof}
We let $\mathsf{J}_{w}$ be the Jacobian matrix defined by the generators of $\ci_w$ given in Lemma~\ref{lemma.generators}.  We show that $\mathsf{J}_w$ has full rank.  The rows of $\mathsf{J}_w$ are indexed by $w(\Phi^-\setminus\Delta^-)$ and the columns by $w(\Phi^-)$.  By~\eqref{eqn.w.action.neg}, we order the rows of $\mathsf{J}_w$ so that the rows indexed by elements of $(\Phi^- \setminus \Phi_K^-)$ come first, followed by the rows indexed by elements of $\Phi_K$, and similarly for the columns.  We can now consider $\mathsf{J}_w$ as a block matrix of the form
\[
\mathsf{J}_w = \left[ \begin{array}{c|c} \sf{A} & \sf{B}\\ \hline \sf{C} & \sf{D} \end{array}  \right]
\]
where $\mathsf{A}$ is the submatrix of $\mathsf{J}_w$ corresponding to the generators $\pi_\eta(\Ad(\mathbf{u}^{-1})(X_J))$ of $\ci_w$ with $\eta\notin \Phi_K$ and to variables $z_\gamma$ with $\gamma\notin  \Phi_K$.  To show that $\mathsf{J}_w$ has full rank, we show that $\mathsf{A}$ and $\mathsf{D}$ have full rank and that $\mathsf{C}=0$.

First we show $\mathsf{C}=0$.  By~\eqref{eqn.gens.cominuscule}, the rows of $\mathsf{C}$ are indexed by $\eta\in (\Phi_K^+\setminus K)$.  By Lemma~\ref{lemma.linear.term}, any linear summand of $\pi_\eta(\Ad(\mathbf{u}^{-1})(N))$ is $z_\gamma$ where $\eta = \gamma+\alpha$ for $\gamma\in w(\Phi^-) = (\Phi^- \setminus \Phi_K^-)\sqcup \Phi_K^+$ and $\alpha\in \Delta$.  A column of $\mathsf{C}$ is indexed by a root $\gamma\in \Phi^- \setminus \Phi_K^-$; if $z_\gamma$ appears in a linear term for such $\gamma$ then $\eta = \gamma+\alpha\in \Phi^-$, contradicting our assumption that $\eta\in \Phi^+$.

To show $\mathsf{D}$ has full rank, note that the coefficient of the linear term with the variable $z_\gamma$ in $\pi_\eta(\Ad(\mathbf{u}^{-1})(N))$ is $-c_{\gamma,\alpha}$ if $\eta=\gamma+\alpha$ for some $\alpha\in K$ and $0$ otherwise.  Now note that $\Phi_K$ is a root system of type $B_{n-1}$ and $\mathsf{D}$ is precisely the Jacobian matrix for $\ci_{w_0,\Pet_{B_{n-1}}}$ since the structure constants don't change in a Lie subalgebra.  As we show below, $w_0$ always indexes a smooth point in a Peterson variety (since it has a dense orbit under a group action and every variety has at least one smooth point).  Hence $\mathsf{D}$ has full rank.

Now we explicitly compute $\mathsf{A}$. Recall that we label the simple roots of the type $\mathrm{B}_n$ root system using the conventions of~\cite{Humphreys-LieAlg}, so $\alpha_n$ denotes the short simple root.  Let
$$\beta_i=\alpha_1+\cdots+\alpha_i$$ for $i=1,\ldots,n$ and
$$\beta_{n+j}=\alpha_1+\cdots+\alpha_{n-j}+2\alpha_{n-j+1}+\cdots+2\alpha_n$$ for $j=1,\ldots,n-1$.  Note that $\theta=\beta_{2n-1}$.  Then
$$\Phi^-\setminus \Phi_K^-=\{-\beta_1,\ldots, -\beta_{2n+1}\}.$$
Furthermore, for $1\leq i\leq 2n-2$, $-\beta_i=-\beta_{i+1}+\alpha$ for some simple root $\alpha\in K$, and $-\beta_i\neq -\beta_j+\alpha$ for any simple root $\alpha$ when $j\neq i+1$.  Hence, $\pi_{-\beta_i}(\Ad(\mathbf{u}^{-1})(N))$ has as its only linear term $-cz_{\beta{i+1}}$ for some nonzero constant $c$, and $\mathsf{A}$ is diagonal with an extra zero column and hence has full rank.

Hence, $\dot w B$ is a smooth point of $\Pet_\Delta$, as desired. 
\end{proof}

We are now ready to complete the proof of Theorem~\ref{thm.Pet-singularities}.
Recall that the exists a 1-dimensional subtorus $T^1$ of $T$ that acts on $\Pet_\Delta$ with fixed-point set $\Pet_\Delta \cap \cb^T = \{ \dot y_K \mid K\subseteq \Delta \}$ (see~\cite[Lemma 5.1]{Harada-Tymoczko2017}).    

\begin{proof}[Proof of Theorem~\ref{thm.Pet-singularities}]
For any Weyl group $W$, the Peterson-Schubert cell $C_{w_0}\cap \Pet_\Delta$ is dense in $\Pet_\Delta$ and equal to the $Z_G(N)$-orbit through $\dot w_0 B$~\cite{Balibanu2017}.  Thus $\dot w_0B$ is smooth, since, if it were singular, every point in $\Pet_\Delta$ would be singular, which is impossible.

The case where $W$ is of type $\mathrm{A}_n$ was previously considered by the first author and Yong~\cite{Insko-Yong2012}.  In particular $\dot w B$ is smooth in $\Pet_\Delta$ when $w=y_K$ for $K=\Delta\setminus\{\alpha_1\}$ and $K=\Delta\setminus\{\alpha_n\}$.  When $W$ is of type $\mathrm{B}_n$, $\dot y_K B$ is smooth for $K=\Delta\setminus\{\alpha_1\}$ by Proposition~\ref{prop.pet-typeB-smooth}.

If $\dot w B$ is smooth, then every point in the Hessenberg--Schubert cell $C_{w}\cap \Pet_\Delta$ is smooth. Indeed, if $u\dot w B$ were a singular point of $C_{w}\cap \Pet_\Delta$ for some $u\in U$, then the closure of the $T^1$-orbit through $u\dot w B$, which contains $\dot wB$, would be singular.

The above shows that, if $w\notin W^*$, then every point of $C_w\cap\Pet_\Delta$ is a smooth point of $\Pet_\Delta$.  The theorem now follows by Corollary~\ref{cor.pet-singular}.
\end{proof}

It's a well-known result of DeMari, Procesi, and Shayman that the regular semisimple  Hessenberg variety  $\Hess(S) = \Hess(X_{\emptyset})$, which is the toric variety associated to the Weyl chambers, is smooth~\cite{DPS1992}. Our first application of Theorems~\ref{thm.singularTpts} and~\ref{thm.Pet-singularities} tells us that all regular Hessenberg varieties $\Hess(X_J)$ in the flag variety of any simple algebraic group are singular outside of the regular semisimple case.

\begin{corollary}\label{cor.reg.singular} Suppose $G$ is a reductive algebraic group such that the root system $\Phi$ of $\fg$ is irreducible of rank $|\Delta|\geq 2$. The regular Hessenberg variety $\Hess(X_J)$ is smooth if and only if $J=\emptyset$.
\end{corollary}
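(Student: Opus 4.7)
The easy direction is immediate from the cited work of DeMari, Procesi, and Shayman~\cite{DPS1992}: when $J = \emptyset$, $X_J = S$ is regular semisimple and $\Hess(X_\emptyset)$ is the permutohedral toric variety, which is smooth.

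For the converse, my plan is to construct, for each nonempty $J \subseteq \Delta$, an explicit Weyl flag $\dot w B \in \Hess(X_J)$ that is singular, by combining Theorem~\ref{thm.singularTpts} with Theorem~\ref{thm.Pet-singularities}. The argument splits into two cases. First, if $J = \Delta$ then $\Hess(X_J) = \Pet_\Delta$; here the identity $e = y_\emptyset$ lies in the set $W^*$ of Definition~\ref{defn.Pet-singularities} in every irreducible type of rank at least $2$ (the only way $e \in W^*$ can fail is via $\Delta \setminus \{\alpha\} = \emptyset$ in types $\mathrm{A}$ or $\mathrm{B}$, which forces $|\Delta| = 1$), so $\dot e B$ is singular by Theorem~\ref{thm.Pet-singularities}. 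Second, if $\emptyset \neq J \subsetneq \Delta$, I would use the connectedness of the Dynkin diagram to choose a simple root $\beta \in \Delta \setminus J$ adjacent to some $\alpha \in J$ and take $w = s_\beta$; the coset decomposition of Corollary~\ref{cor.cells} then reads $w = y_K v$ with $K = \emptyset$ and $v = s_\beta$.

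For this choice, two root-system facts need to be checked: (i) $s_\beta \in {^J}W$, which holds because $\beta \notin J$ forces $s_\beta(\gamma) = \gamma - \langle \gamma, \beta^\vee \rangle \beta$ to retain the same non-negative $J$-coefficients as $\gamma$ for every $\gamma \in \Phi_J^+$, so by sign rigidity $s_\beta(\gamma) \in \Phi^+$; and (ii) $\Delta(s_\beta) \subsetneq J$, because $s_\beta(\alpha) = \alpha - \langle \alpha, \beta^\vee \rangle \beta$ has a strictly positive $\beta$-coefficient (as $\alpha$ is adjacent to $\beta$) and is therefore not a simple root, so $\alpha \in J \setminus \Delta(s_\beta)$. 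Once both are established, Theorem~\ref{thm.singularTpts}(2) produces $\dot s_\beta B$ as a singular point of $\Hess(X_J)$.

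The step I expect to be the main obstacle is the second case: the irreducibility hypothesis must be invoked precisely where it is needed (to guarantee the existence of an adjacent $\beta \in \Delta \setminus J$ whenever $J$ is proper), and the two root-system checks must be executed carefully. Neither is deep — both rely on the standard sign rigidity of a root's expansion in simple-root coordinates — but the bookkeeping has to cleanly match the two subcases of Theorem~\ref{thm.singularTpts} against the dichotomy $J = \Delta$ versus $J \subsetneq \Delta$.
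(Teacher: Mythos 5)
Your proposal is correct and follows essentially the same route as the paper's proof: handle $J=\emptyset$ via~\cite{DPS1992}, handle $J=\Delta$ via Theorem~\ref{thm.Pet-singularities}, and for $\emptyset\subsetneq J\subsetneq\Delta$ choose $\alpha\in J$ and $\beta\in\Delta\setminus J$ adjacent in the Dynkin diagram and apply Theorem~\ref{thm.singularTpts}(2) to $w=s_\beta$ after checking $s_\beta\in{^J}W$ and $\Delta(s_\beta)\subsetneq J$. You simply make explicit the root-system bookkeeping (adjacency, sign rigidity) that the paper states more tersely.
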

\begin{proof} Since $\Hess(J_\emptyset) = \Hess(S)$ is smooth, it suffices to show that $\Hess(X_J)$ is singular for all $J\neq \emptyset$. Without loss of generality, we may assume $J \neq \Delta$ as $\Pet_\Delta= \Hess(X_{\Delta})$ is singular whenever $|\Delta|\geq 2$ by Theorem~\ref{thm.Pet-singularities}.

Now, that $\emptyset \subsetneq J \subsetneq \Delta$ implies that there exist $\alpha\in J$ and $\beta\in \Delta\setminus J$ such that $s_\beta(\alpha)\notin \Delta$.  Note that $s_\beta\in {^J}W$ and $\alpha\notin\Delta(s_\beta)=s_\beta(\Delta)\cap \Phi_J^+ = s_\beta(\Delta)\cap J$, so $\Delta(s_\beta)\neq J$.  Thus $\dot s_\beta B$ is a singular point of $\Hess(X_J)$ by Theorem~\ref{thm.singularTpts}, and we conclude that $\Hess(X_J)$ is singular.
\end{proof}

Given subsets $I,J \subseteq \Delta$ we let $[I,J]\subseteq\Phi$ denote the set of all positive roots $\alpha+\beta \in \Phi$ such that $\alpha\in I$ and $\beta\in J$.  Combining Corollary~\ref{cor.reg.singular} with the results of Section~\ref{sec.Hessenberg-Schubert} yields the following characterization of smooth Hessenberg--Schubert varieties in $\Hess(X_J)$.

\begin{theorem}\label{thm.smooth.Hess-Schub} Suppose $w\in W$ is $J$-admissible, and write  $w=y_K v$ with $K\subseteq \Delta(v)$ and $v\in {^J}W$ as in Corollary~\ref{cor.cells}.  The Hessenberg--Schubert variety $\overline{C_w\cap \Hess(X_J)}$ is smooth if and only if $[v^{-1}(K), \des(w)]=\emptyset$.
\end{theorem}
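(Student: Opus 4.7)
The plan is to combine Theorem~\ref{thm.cell.closure}, Corollary~\ref{cor.reg.singular}, and Lemma~\ref{lemma.Jw.sets}, together with the classical fact that the longest element of a Weyl group induces an involutive automorphism of its Dynkin diagram. First, Theorem~\ref{thm.cell.closure} gives $\overline{C_w\cap \Hess(X_J)} \simeq \Hess_L(X_{J,w})$, where $L=L_w$ is the Levi with simple roots $\des(w)$ and $X_{J,w}$ is the regular element of $\fl_w$ of the form~\eqref{eqn.reg.element} associated to the subset $J_w = \tau_w^{-1}(J)\cap \Phi_w \subseteq \des(w)$ (Lemma~\ref{lemma.regular.proj}). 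So smoothness of the Hessenberg--Schubert variety is equivalent to smoothness of $\Hess_L(X_{J,w})$.

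Next, I would decompose $\Phi_w$ into its irreducible components $\Phi_w^{(1)},\ldots,\Phi_w^{(m)}$, with simple root subsets $\des(w)^{(i)}\subseteq \des(w)$. The Levi splits accordingly as $L_w=L^{(1)}\times\cdots\times L^{(m)}$, the minimal indecomposable Hessenberg space $H_{\des(w)}$ splits as $H_{\des(w)^{(1)}}\oplus\cdots\oplus H_{\des(w)^{(m)}}$, and $X_{J,w}$ splits as a sum of regular elements $X_{J,w}^{(i)}\in \fl^{(i)}$ associated to $J_w^{(i)}:=J_w\cap\des(w)^{(i)}$. Hence
\[
\Hess_L(X_{J,w})\;\simeq\;\prod_{i=1}^m \Hess_{L^{(i)}}(X_{J,w}^{(i)}).
\]
For each factor, Corollary~\ref{cor.reg.singular} yields smoothness if and only if $J_w^{(i)}=\emptyset$, with the one exception that rank-one factors ($|\des(w)^{(i)}|=1$) are automatically smooth, since $H_{\des(w)^{(i)}}$ equals the entire Lie algebra of $L^{(i)}$ and the associated Hessenberg variety is just $\mathbb{P}^1$. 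Putting these together, $\Hess_L(X_{J,w})$ is smooth if and only if every $\alpha\in J_w$ is an isolated node of the Dynkin diagram of $\Phi_w$, which is the condition $[J_w,\des(w)]=\emptyset$.

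Finally, I would convert this into the stated condition $[v^{-1}(K),\des(w)]=\emptyset$ using Lemma~\ref{lemma.Jw.sets}, which gives $J_w = -y_{\des(w)}(v^{-1}(K))$. The longest element $y_{\des(w)}\in W_{\des(w)}$ has the well-known property that $-y_{\des(w)}$ is an involution of $\des(w)$ inducing an automorphism of the Dynkin diagram of $\Phi_w$; in particular it preserves adjacency and sends each irreducible component to itself. For $\alpha,\beta\in \des(w)$, the sum $\alpha+\beta$ lies in $\Phi$ if and only if it lies in $\Phi_w$ if and only if $\alpha$ and $\beta$ are adjacent in the Dynkin diagram of $\Phi_w$. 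Therefore $[v^{-1}(K),\des(w)]=\emptyset$ if and only if $[J_w,\des(w)]=\emptyset$, which finishes the argument.

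The main obstacle is carrying out the irreducible decomposition of $\Hess_L(X_{J,w})$ compatibly, so that smoothness of the product reduces to smoothness of each factor, and then correctly treating rank-one components as automatically smooth, since Corollary~\ref{cor.reg.singular} does not apply to type $\textup{A}_1$. The Dynkin-diagram-automorphism fact used in the last paragraph is standard and causes no real trouble.
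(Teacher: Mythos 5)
Your proposal is correct and follows essentially the same path as the paper's proof: reduce via Theorem~\ref{thm.cell.closure} to $\Hess_L(X_{J,w})$, split into irreducible factors, apply Corollary~\ref{cor.reg.singular} (with rank-one factors trivially smooth), and then translate $J_w$ into $v^{-1}(K)$ via Lemma~\ref{lemma.Jw.sets}. The paper's last step is a direct verification that $-y_{\des(w)}$ carries the root condition across; you package the same observation as the standard fact that $-y_{\des(w)}$ is a Dynkin diagram automorphism of $\Phi_w$, which is a cleaner way of stating it but not a different argument.
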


\begin{proof}  Recall that $\Phi_w$ is the root subsystem of $\Phi$ spanned by $\des(w)$. Let $\Phi_w = \bigsqcup_{p=1}^m \Phi_{w, p}$ be a decomposition of $\Phi_w$ into irreducible subroot systems and $\Delta_{w, p}:= \Phi_{w,p}\cap \des(w)$ for each $1\leq p \leq m$.

By Theorem~\ref{thm.cell.closure}, $\overline{C_w\cap \Hess(X_J)} \simeq \Hess_L(X_{J,w})$ where $L=L_w$ is the standard Levi subgroup corresponding to $\des(w)$ and $X_{J,w} = \pi_w(\Ad(\dot \tau_w^{-1})\left( X_J\right))$ is defined as in~\eqref{eqn.XJ.projection}.  Write $L\simeq L_1 \times \cdots \times L_m$ where $L_p$ is the standard Levi subgroup of $G$ whose Lie algebra $\fl_{p}:= \Lie(L_p)$ has root system $\Phi_{w,p}$.  (In the notation from Section~\ref{sec.alg.gps}, we have $L_p = L_{\Delta_{w,p}}$.)  Now $\fl_1\oplus \cdots \oplus \fl_m \subseteq \fg$ and we let $\pi_{w,p}: \fg\to \fl_p$ be the projection sending root vector $E_{\gamma}$ to $0$ if $\gamma\notin \Phi_{w,p}$.   

For each $p$ we set $X_{J,w,p}:= \pi_{w,p} (\Ad(\dot \tau_w^{-1})\left( X_J \right))$, so $X_{J,w} = \sum_p X_{J,w,p}$. We have
\[
 \Hess_L(X_{J,w}) \simeq \Hess_{L_1}(X_{J,w,1}) \times \cdots \times \Hess_{L_m}(X_{J,w,m}).
\]
The regular Hessenberg variety $ \Hess_L(X_{J,w})$ is smooth if and only if each $\Hess_{L_p}(X_{J,w,p})$ is smooth.  Since $\pi_{w,p}$ factors through $\pi_w$, each $X_{J,w,p}$ is a regular element of $\fl_p$ by Lemma~\ref{lemma.regular.proj}. Note that the nilpotent part of $X_{J,w,p}$ is the sum of simple root vectors corresponding to the elements of $J_w\cap \Delta_{w,p}$.  Thus by Corollary~\ref{cor.reg.singular}, $ \Hess_L(X_{J,w})$ is smooth if and only if for each each $p$ such that $|\Delta_{w,p}|\geq 2$, we have $J_w\cap \Delta_{w,p}=\emptyset$.

Since each set $\Delta_{w,p}$ is the set of simple roots of an irreducible root system, if $|\Delta_{w,p}|\geq 2$, then for each $\alpha\in \Delta_{w,p}$ there exists $\beta\in \Delta_{w,p}$ such that $\alpha+\beta\in \Phi$. Thus we have $|\Delta_{w,p}|\geq 2$ and $J_{w}\cap \Delta_{w,p}\neq \emptyset$ for some $p$ if and only if there exist $\alpha\in J_w$ and $\beta\in \des(w)$ such that $\alpha+\beta\in \Phi_{w,p}$. Given such an $\alpha$ and $\beta$ we have
\[
\alpha':= -y_{\des(w)}(\alpha)\in v^{-1}(K)
\] 
by Lemma~\ref{lemma.Jw.sets} and
\[
\beta':=-y_{\des(w)}(\beta) \in \des(w)
\]
since $y_{\des(w)}(\des(w)) = \des^-(w)$. Thus $|\Delta_{w,p}|\geq 2$ and $J_{w}\cap \Delta_{w,p}\neq \emptyset$ for some $p$ if and only if there exist $\alpha'\in v^{-1}(K)$ and $\beta'\in \des(w)$ such that $\alpha'+\beta'\in \Phi$, meaning that $[v^{-1}(K), \des(w)]\neq \emptyset$.  Thus $\Hess(X_J)$ is singular if and only if $[v^{-1}(K), \des(w)]\neq \emptyset$, as desired.
\end{proof}

\begin{example}[Type $\textup{B}_4$] \label{ex.typeB4.5} Let $G=O_9(\C)$, $\fg=\fo_9(\C)$, and $J=\{\alpha_1, \alpha_2, \alpha_4\}$ as in Example~\ref{ex.typeB4.3} above.  Applying the results of Theorem~\ref{thm.smooth.Hess-Schub} we obtain the following data regarding the two Hessenberg--Schubert varieties considered in Example~\ref{ex.typeB4.3}. (See the tables appearing in Examples~\ref{ex.typeB4.1} and~\ref{ex.typeB4.2} as well.)
\[
\begin{array}{cccccc}
w\in W & K & v & \des(w) & v^{-1}(K) & \overline{C_w\cap \Hess(X_J)} \\ \hline
s_1s_3s_4 & \{\alpha_1\} & s_3s_4  & \{\alpha_1, \alpha_4\} & \{\alpha_1\} & \textup{is smooth} \\ 
s_1s_2s_1s_3s_2s_1 & \{\alpha_1, \alpha_2\} & s_3s_2s_1 & \{ \alpha_1, \alpha_2, \alpha_3 \} & \{\alpha_2, \alpha_3\} & \textup{is singular}
\end{array}
\] 
Note that our table aligns with our findings in Example~\ref{ex.typeB4.3}: $\overline{C_{s_1s_3s_4}\cap \Hess(X_J)}$ is isomorphic to a product of flag varieties and $\overline{C_{s_1s_2s_1s_3s_2s_1}\cap \Hess(X_J)}$ to a singular regular Hessenberg variety in $GL_4(\C)/B$.
\end{example}

We conclude this section with a combinatorial interpretation of Theorem~\ref{thm.smooth.Hess-Schub}, characterizing the smoothness of the Hessenberg--Schubert variety $\overline{C_w\cap \Hess(X_J)}$ in type~A using the one-line notation of the permutation $w$. 
Given $w\in S_n$ we adopt the convention that $w(n+1)=n+1$ and $w(0)=0$.

\begin{corollary}\label{cor.typeA.smooth.Hess-Schub} Let $\mu$ be a composition of $n$. Suppose $w\in S_n$ is $J_\mu$-admissible, and write $w=y_K v$ with $K\subseteq \Delta(v)$ and $v\in {^\mu}W$ the decomposition of Corollary~\ref{cor.cells}. The Hessenberg--Schubert variety $\overline{C_w\cap \Hess(X_\mu)}$ is smooth if and only if the one-line notation of $w$ is of the form
\[
w = [ \cdots a ,\,  i+1, i,\, b \cdots] \, \text{ with } a<i \, \text{ and }\, i+1<b   
\]
for all $i$ such that $\alpha_i\in K$.
\end{corollary}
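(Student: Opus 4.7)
The plan is to invoke Theorem~\ref{thm.smooth.Hess-Schub} and translate the root-theoretic condition $[v^{-1}(K), \des(w)] = \emptyset$ into the combinatorial pattern condition on the one-line notation of $w$. The key type-A specific fact is that two simple roots $\alpha_p, \alpha_q$ sum to a root in $\Phi$ if and only if $|p-q| = 1$. Because $v^{-1}(K) \subseteq \Delta$ by Lemma~\ref{lemma.inv.decomp} and $\des(w) \subseteq \Delta$ by definition, the vanishing $[v^{-1}(K), \des(w)] = \emptyset$ is therefore equivalent to the assertion that for every $\alpha_p \in v^{-1}(K)$, neither $\alpha_{p-1}$ nor $\alpha_{p+1}$ lies in $\des(w)$ (with $\alpha_0$ and $\alpha_n$ understood not to exist).

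For each $\alpha_i \in K$ I would set $p := v^{-1}(i)$. Because $\alpha_i \in \Delta(v) \subseteq J_\mu$, the relation $v^{-1}(\alpha_i) = \alpha_p$ forces $v(p) = i$ and $v(p+1) = i+1$. More generally, if $\alpha_j, \alpha_{j+1}, \ldots, \alpha_{j+m-1} \in K$ is a maximal consecutive run with $q := v^{-1}(j)$, a short induction yields $v(q+\ell) = j+\ell$ for $0 \le \ell \le m$, and $y_K$ reverses $\{j, \ldots, j+m\}$ (it factors as a product over maximal runs of $K$, each factor acting as the longest element of the symmetric group on the corresponding block). Hence $w(q+\ell) = y_K(v(q+\ell)) = j+m-\ell$ for these $\ell$.

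If every maximal run of $K$ has length one, then for each $\alpha_i \in K$ the generator $s_{\alpha_i}$ commutes with every other generator of $W_K$ and is the only one moving $i$ or $i+1$, so $y_K(i) = i+1$ and $y_K(i+1) = i$, giving $w(p) = i+1$ and $w(p+1) = i$. The descent conditions $\alpha_{p-1}, \alpha_{p+1}\notin\des(w)$ then read $w(p-1) < w(p) = i+1$ and $w(p+1) = i < w(p+2)$, which, combined with $w(p-1), w(p+2) \notin \{i, i+1\}$, sharpen to $w(p-1) < i$ and $w(p+2) > i+1$ --- exactly the pattern condition with $a = w(p-1)$ and $b = w(p+2)$. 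Conversely, if $K$ contains a maximal run of length $m \geq 2$ starting at $\alpha_j$, then $\alpha_q, \alpha_{q+1} \in v^{-1}(K) \subseteq \des(w)$ with $\alpha_q + \alpha_{q+1} \in \Phi$, forcing $[v^{-1}(K), \des(w)] \neq \emptyset$ and singularity by Theorem~\ref{thm.smooth.Hess-Schub}; simultaneously the pattern for $\alpha_j$ sees values $j+1, j$ at positions $(q+m-1, q+m)$ preceded by $w(q+m-2) = j+2$, so $a = j+2 \not< j$ and the pattern condition fails at $\alpha_j$.

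The argument is essentially bookkeeping; the main point to verify carefully is the inductive computation $v(q+\ell) = j+\ell$ throughout a maximal run of $K$, together with the sharpening of the weak inequalities coming from the descent conditions to the strict inequalities $a < i$ and $b > i+1$ using $a, b \notin \{i, i+1\}$. Boundary cases ($p = 1$ or $p+1 = n$) are handled automatically by the convention $w(0) = 0$, $w(n+1) = n+1$.
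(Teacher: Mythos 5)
Your proposal is correct and follows essentially the same route as the paper: both invoke Theorem~\ref{thm.smooth.Hess-Schub} and translate $[v^{-1}(K),\des(w)]=\emptyset$ via the type-A adjacency criterion for simple roots (together with $v^{-1}(K)\subseteq\Delta$ from Lemma~\ref{lemma.inv.decomp} and $v^{-1}(K)\subseteq\des(w)$), then convert the resulting descent conditions on $w$ into the one-line pattern. The only difference is organizational --- you group the analysis by maximal consecutive runs of $K$, showing a run of length $\ge 2$ simultaneously violates both conditions, whereas the paper argues one $\alpha_i$ at a time and derives $\alpha_{i\pm1}\notin K$ directly from whichever hypothesis is currently in force.
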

\begin{proof} When $\alpha_i = \epsilon_i-\epsilon_{i+1}\in K$, we have $w^{-1}(i)=w^{-1}(i+1)+1$. Indeed, by Lemma~\ref{lemma.inv.decomp} we have $v^{-1}(K)\subseteq \Delta$ and therefore
\[
\epsilon_{w^{-1}(i)} - \epsilon_{w^{-1}(i+1)} = w^{-1}(\alpha_i) \in v^{-1}y_K^{-1}(K) \in v^{-1}(K^-)\subseteq \Delta^-.
\]
This proves that $i$ must appear in the position immediately following that of $i+1$ in the one-line notation for $w$.

Assume first that the one-line notation of $w$ has the required form.
Together with the paragraph above, this implies $\alpha_{i-1}, \alpha_{i+1}\notin K$ since $a\neq i+2$ and $b\neq i-1$. Thus $y_K(\alpha_i) = -\alpha_i$ and, in particular, $v^{-1}(\alpha_i) = -w^{-1}(\alpha_i) = \epsilon_{w^{-1}(i+1)}-\epsilon_{w^{-1}(i)}$.  Note that $\beta\in \Delta$ such that $\beta+v^{-1}(\alpha_i)\in \Phi$ if and only if $\beta = \epsilon_{w^{-1}(a)}-\epsilon_{w^{-1}(i+1)}$ or $\beta=\epsilon_{w^{-1}(i)} - \epsilon_{w^{-1}(b)}$.  As $a<i$ and $i+1<b$, we have that $\beta\notin \des(w)$ in either case, and we conclude
\[
[\{v^{-1}(\alpha_i)\}, \des(w)] = \emptyset.
\] 
Since the above holds for all $\alpha_i\in K$, we conclude $\overline{C_w\cap \Hess(X_J)}$ is smooth by Theorem~\ref{thm.smooth.Hess-Schub}.

Now assume that $\overline{C_w\cap \Hess(X_J)}$ is smooth.  Let $\alpha_i\in K$.  The first paragraph of our proof implies that $w^{-1}(i)=w^{-1}(i+1)+1$.  By Theorem~\ref{thm.smooth.Hess-Schub}, we may assume that $\alpha_{i-1}, \alpha_{i+1}\notin K$ since otherwise $\emptyset\neq [v^{-1}(K), v^{-1}(K)] \subseteq [v^{-1}(K), \des(w)]$ by Lemma~\ref{lemma.inv.decomp}, contradicting Theorem~\ref{thm.smooth.Hess-Schub}.  Thus $y_K(\alpha_i) = -\alpha_i$ and $v^{-1}(\alpha_i) = -w^{-1}(\alpha_i) = \epsilon_{w^{-1}(i+1)}-\epsilon_{w^{-1}(i)}$ as before.    

Set $a=w(w^{-1}(i+1)-1)$ and $b= w(w^{-1}(i)+1)$. 
To complete the proof we argue that $a<i$ and $i+1<b$.   Note that $\{a,b\}\cap \{i,i+1\}=\emptyset$ as $w$ is a permutation. If $a>i+1$ then $\beta=\epsilon_{w^{-1}(i+1)-1}-\epsilon_{w^{-1}(i+1)}\in \des(w)$, and $\beta+v^{-1}(\alpha_i)\in \Phi$, violating our assumption that $\overline{C_w\cap \Hess(X_J)}$ is smooth by Theorem~\ref{thm.smooth.Hess-Schub}.  We obtain a similar contradiction if $b<i$.  This completes the proof.
\end{proof}

\begin{example} Let $n=8$ and $\mu=(4,3,1)$ (see~Examples~\ref{ex.8.(4,3,1).1} and~\ref{ex.8.(4,3,1).2}). The $\mu$-blocks are $\{1,2,3,4\}$, $\{5,6,7\}$, and $\{8\}$. Consider $v=81235674\in {^{(4,3,1)}}W$, so $\Delta(v)=\{ \alpha_1, \alpha_2, \alpha_5, \alpha_6 \}$. The following table displays the one-line notation for $w=y_Kv$ for several subsets $K\subseteq \Delta(v)$ and records the conclusions of Corollary~\ref{cor.typeA.smooth.Hess-Schub} in each case. In the singular cases, we highlight one of the subsequences of $w$ which violates the criteria of the corollary.
\[
\begin{array}{c|c|c}
K \subseteq \Delta(v) & w & \;\overline{C_w\cap \Hess(X_\mu)}  \\ \hline
\emptyset & 81235674 & \textup{ smooth } \\ 
\{\alpha_1\} &  \mathbf{8}\mathbf{2}\mathbf{1}\mathbf{3}5674 & \textup{ singular }  \\
\{\alpha_2\} &   81325674  & \textup{ smooth } \\ 
\{\alpha_1, \alpha_2\} &  8\mathbf{3}\mathbf{2}\mathbf{1}\mathbf{5}674 & \textup{ singular } \\
\{ \alpha_2, \alpha_5 \} &  813{2}{6}{5}{7}4 & \textup{ smooth }
\end{array}
\]
\end{example}

%\bibliographystyle{alpha}
%\bibliography{ref.bib}

\end{document}